\newcommand{\diag}{\mathop{\mathrm {diag}}\nolimits}
\newcommand{\tr}{\mathop{\mathrm {tr}}\nolimits}
\newcommand{\Ad}{\mathop{\mathrm {Ad}}\nolimits}
\newcommand{\Hom}{\mathop{\mathrm {Hom}}\nolimits}
\newcommand{\Ind}{\mathop{\mathrm {Ind}}\nolimits}
\newcommand{\id}{\mathop{\mathrm {id}}\nolimits}
\newcommand{\hatoplus}{\mathop{\widehat{\bigoplus}}}
\newcommand{\sI}{\sqrt{-1}}
\newcommand{\hs}{\hspace{0.5mm}}
\newcommand{\mhs}{\hspace{-2mm}}
\newcommand{\mmhs}{\hspace{-1mm}}
\newcommand{\Diag}{\mathop{\mathrm {Diag}}\nolimits}
\newcommand{\mC}{\mathbf{C}}
\newcommand{\mH}{\mathbf{H}}
\newcommand{\mP}{\mathbf{P}}
\newcommand{\mR}{\mathbf{R}}
\newcommand{\mS}{\mathbf{S}}
\newcommand{\mT}{\mathbf{T}}
\newcommand{\mZ}{\mathbf{Z}}
\newcommand{\me}{\mathbf{e}}
\newcommand{\mv}{\mathbf{v}}
\newcommand{\gC}{\mathfrak{C}}
\newcommand{\ga}{\mathfrak{a}}
\newcommand{\g }{\mathfrak{g}}
\newcommand{\gk}{\mathfrak{k}}
\newcommand{\gl}{\mathfrak{l}}
\newcommand{\gm}{\mathfrak{m}}
\newcommand{\gn}{\mathfrak{n}}
\newcommand{\go}{\mathfrak{o}}
\newcommand{\gp}{\mathfrak{p}}
\newcommand{\gs}{\mathfrak{s}}
\newcommand{\gu}{\mathfrak{u}}
\newtheorem{thm}{Theorem}[section]
\newtheorem{lem}[thm]{Lemma}
\newtheorem{prop}[thm]{Proposition}
\newtheorem{cor}[thm]{Corollary}
\theoremstyle{definition}
\newtheorem{defn}[thm]{Definition}
\numberwithin{equation}{section}
\theoremstyle{remark}
\begin{document}

\title{The structures of 
standard $(\g ,K)$-modules of $SL(3,\mR )$.}

\author[Tadashi Miyazaki]{Tadashi Miyazaki}
\address{Department of Mathematical Sciences, University of Tokyo}
\email{miyaza@ms.u-tokyo.ac.jp}

%%%%%%%%%%	TEXT START	%%%%%%%%%%

\maketitle
\begin{abstract}
We describe explicitly the structures of standard $(\g ,K)$-modules 
of $SL(3,\mR )$. 
\end{abstract}

\section{Introduction}
\label{sec:introduction}

As far as we know, 
for some `small' semisimple Lie groups $G$, the $(\g ,K)$-module 
structures of standard representations are completely described. 
For example, the description of them for $SL(2,\mR)$ 
is found in standard textbooks, and there are rather complete results 
for some groups of real rank $1$, e.g.\ 
$SU(n,1)$ in \cite{MR0330355} and $Spin(1,2n)$ in \cite{MR0453925}. 
However, for Lie groups of higher rank, there are few references 
as far as the author knows. 
It seems to be difficult to describe the whole 
$(\g ,K)$-module structures even for standard representations of 
classical groups of higher rank, since their $K$-types are not 
multiplicity free. In the papers \cite{pre_standard_1_2006} and 
\cite{pre_standard_2}, the $(\g ,K)$-module 
structures of some standard representations of $Sp(2,\mR )$ are 
described by T. Oda. In the former paper \cite{master_2_2006}, we extend 
the result for principal series representations of $Sp(3,\mR )$. 
The method in these papers is applicable 
to study of standard representations of another groups. 
In this paper, we use this method to study 
standard $(\g ,K)$-modules of $SL(3,\mR )$. 

Before describing the case of $SL(3,\mR )$, let us explain 
the problem in a more precise form 
for a general real semisimple Lie group $G$ with 
its Lie algebra $\g $. 
Fix a maximal compact subgroup $K$ of $G$. 
Since any standard $(\g ,K)$-modules are realized as subspaces of 
$L^2(K)$ as $K$-modules, we investigate the $K$-module structure of 
standard $(\g ,K)$-modules by the Peter-Weyl's theorem. 
In order to describe the action of $\g $ or $\g_\mC =\g \otimes_\mR \mC $, 
it suffices to investigate the action of $\gp $ or $\gp_\mC $, 
because of the Cartan decomposition $\g =\gk \oplus \gp $. 
Therefore, the investigation of the action of $\gp $ 
or $\gp_\mC $ is essential 
to give the description of the $(\g ,K)$-module structure of 
a standard representation. 
To study the action of $\gp_\mC $, we compute the linear map 
$\Gamma_{\tau ,i}$ defined as follows. 
Let $(\pi , H_\pi )$ be a standard representation of $G$ with its 
subspace $H_{\pi ,K}$ of $K$-finite vectors.
For  a $K$-type $(\tau ,V_\tau )$ of $\pi $, and 
a nonzero $K$-homomorphism $\eta \colon V_\lambda \to H_{\pi ,K}$, 
we define a linear map 
$\tilde{\eta }\colon \gp_\mC \otimes_\mC V_\lambda \to H_{\pi ,K}$ 
by $X\otimes v \mapsto X\cdot \eta (v) $. 
Then $\tilde{\eta }$ is a $K$-homomorphism with $\gp_\mC $ endowed with 
the adjoint action $\Ad $ of $K$. 
Let $V_\tau \otimes_\mC \gp_\mC \simeq \bigoplus_{i\in I}V_{\tau_i}$ 
be the decomposition into a direct sum of irreducible $K$-modules and 
$\iota_i$ an injective $K$-homomorphism from 
$V_{\tau_i}$ to $V_\tau \otimes_\mC \gp_\mC$ for each $i$. We define a 
linear map $\Gamma_{\tau ,i}\colon \Hom_K(V_\tau ,H_{\pi ,K})\to 
\Hom_K(V_{\tau_i},H_{\pi ,K})$ 
by $\eta \mapsto \tilde{\eta }\circ \iota_i$. 
These linear maps $\Gamma_{\tau ,i}\ (i\in I)$ characterize the action of 
$\gp_\mC $. Our purpose of this paper is to give explicit expressions 
of $\iota_i$ and $\Gamma_{\tau ,i}$ 
when $\pi $ is a $P$-principal series representation 
of $G=SL(3,\mR )$ for each standard parabolic subgroup $P$ of $G$. 
As a result, we obtain infinite number of 'contiguous relations', 
a kind of system of differential-difference relations among vectors in 
$H_{\pi}[\tau ]$ and $H_{\pi}[\tau_i]$. 
Here $H_{\pi}[\tau ] $ is $\tau$-isotypic component of $H_{\pi}$.
These are described in Proposition \ref{prop:injector}, 
Theorem \ref{th:main} and \ref{th:main2}. 

As an application, we can utilize the contiguous relations 
to obtain the explicit formulae of some spherical functions. 
In the paper \cite{MR2070374}, H. Manabe, T. Ishii and T. Oda 
give the explicit formulae of Whittaker functions of 
principal series representations of $SL(3,\mR )$ to solve 
the holonomic system of differential equations 
characterizing those functions, 
which is derived from the Capelli elements and 
the contiguous relations around minimal $K$-type. 
We can obtain the holonomic systems characterizing 
Whittaker functions of generalized 
principal series representations of $SL(3,\mR )$ 
from the result of this paper. 
We hope that this interesting possibility 
will be considered in future work. 
On the other hand, if we have the explicit formula 
of Whittaker function with a certain $K$-type, 
then we can give those with another $K$-type by using 
contiguous relations. 

We give the contents of this paper. 
In Section \ref{sec:review_sl2}, 
we recall the classical case $SL(2,\mR )$ shortly. 
In Section \ref{sec:preliminaries}, 
we recall the structure of $SL(3,\mR )$ and define a standard representations 
obtained by a parabolic induction with respect to 
the standard parabolic subgroups.
In Section \ref{sec:K-modules}, we introduce 
the standard basis of a finite dimensional 
irreducible representation of $K$ and 
give explicit expressions 
of $\iota_i\colon  V_{\tau_i}\to V_\tau \otimes_\mC \gp_\mC $. 
In Section \ref{sec:structure}, 
we introduce the general setting of this paper and 
give matrix representations of 
$\Gamma_{\tau ,i}$ for principal series representations 
in Theorem \ref{th:main}. 
In Section \ref{sec:structure2}, we give the matrix representations of 
$\Gamma_{\tau ,i}$ for generalized principal series representations 
in Theorem \ref{th:main2}. 
In Section \ref{sec:examples}, we give explicit expressions of 
the action of $\gp_\mC$ in Proposition \ref{prop:p_action}. 

\section*{Acknowledgments}
The author would like to express his gratitude 
to Takayuki Oda for valuable advice on this work 
and also thanks to Yasuko Hasegawa for correction 
of many typos.

\section{The standard $(\g ,K)$-modules of $SL(2,\mR )$}
\label{sec:review_sl2}
We start with a short review of the most classical case, i.e. 
the case of the group $SL(2,\mR )$.

\subsection{The principal series representations of $SL(2,\mR )$}
We denote by $\mZ $, $\mR $ and $\mC $ the ring of rational integers, 
the real number field and the complex number field, respectively. 
Let $\mZ_{\geq 0}$ be the set of non-negative integers, 
$1_n$ be the unit matrix in the space 
$M_n (\mR )$ of real matrices of size $n$ and 
$O_{m,n}$ be the zero matrix of size $m\times n$. 
We denote by $\delta_{ij}$ the Kronecker delta, 
i.e. 
\[
\delta_{ij}=
\left\{
\begin{array}{ll}
1,&i=j,\\
0,&\text{otherwise}.
\end{array}
\right.
\]
For a Lie algebra $\gl $, we denote by $\gl_\mC =\gl \otimes_\mR \mC $ 
the complexification of $\gl $.

We put
\begin{align*}
&G'=SL(2,\mR ),\ 
M'=\{ m=\diag (\varepsilon ,\varepsilon^{-1} )\mid 
\varepsilon \in \{\pm 1\}\},\ 
A'=\{ a(r)=\diag (r,r^{-1})\mid r\in \mR_{>0}\},\\ 
&N'=\left\{\left.
\left(\begin{array}{cc}
1&x\\
0&1
\end{array}\right)\right|x\in \mR\right\}
,\ K'=SO(2)=\left\{\left.\kappa_t=
\left(\begin{array}{cc}
\cos t&\sin t\\
-\sin t&\cos t
\end{array}\right)\right|t\in \mR \right\}.
\end{align*}
Let $\g ',\ \gk ',\ \ga '$ and $\gn '$ be Lie algebras of 
$G',\ K',\ A'$ and $N'$, respectively.

For $\nu \in \mC $ and a character $\sigma $ of $M'$, 
the principal series representation $\pi_{(\nu ,\sigma )}$ of $G'$ 
is defined as the right regular representation 
of $G'$ on the space $H_{(\nu ,\sigma )}$ which is the completion of 
\[
H_{(\nu ,\sigma )}^\infty=
\left\{ f\colon G'\to \mC \text{ smooth } \left| 
\begin{array}{c} 
f(namx)=r^{\nu +1 }\sigma (m) f(x) \hphantom{==========} \\
\text{ for }\ n\in N',\ a=a(r)\in A',\ m\in M',\ x\in G'
\end{array} \right. \right\}
\]
with respect to the norm
\[
\| f\|^2 
=\int_{K'}|f(k)|^2dk.
\]
The restriction map 
$r_{K'} \colon H_{(\nu ,\sigma )}\ni f\mapsto 
f|_{K'}\in L^2(K')$ 
is an injective $K'$-homomorphism when 
$L^2(K')$ is endowed with right regular action of $K'$. 
Then the image of $r_{K'}$ is the following subspace of $L^2(K')$: 
\[
L^2_{(M',\sigma )}(K')=\{ f\in L^2(K')\mid
f(mx)=\sigma (m)f(x)\ \text{for a.e. } m\in M',\ x\in K'\} .
\]

We have an irreducible decomposition of the $K'$-module $L^2(K')$:
\[
L^2(K')=\hatoplus_{p\in \mZ}\mC \cdot \tilde{\chi}_p,
\]
where $\tilde{\chi}_p\colon K'\ni \kappa_t\mapsto e^{\sI pt}\in 
\mC^\times$. 

Therefore we have an isomorphism
\[
H_{(\nu ,\sigma )}\to L^2_{(M',\sigma )}(K')=
\left\{\begin{array}{ll}
\hatoplus_{p\in 2\mZ}\mC \cdot \tilde{\chi}_p,&
\text{ if }\sigma (-1_2)=1,\\[2mm]
\hatoplus_{p\in 1+2\mZ}\mC \cdot \tilde{\chi}_p,&
\text{ if }\sigma (-1_2)=-1.
\end{array}\right.
\]
Let $\chi_p \in H_{(\nu ,\sigma )}$ be an inverse image of $\tilde{\chi}_p$ 
by this isomorphism. 

Now we take a basis $\{w,\ x_+,\ x_-\}$ of ${\g '}_\mC$ defined by 
\begin{align*}
w=&\left(\begin{array}{cc}
0&1\\
-1&0
\end{array}\right),&
x_\pm =&\left(\begin{array}{cc}
1&\pm \sI \\
\pm \sI &-1
\end{array}\right) .
\end{align*}
Here we note that  
\[
{\g '}_\mC ={\gk '}_\mC \oplus {\gp '}_\mC ,\quad 
{\gk '}_\mC =\mC \cdot w,\quad 
{\gp '}_\mC =\mC \cdot x_+\oplus \mC \cdot x_-.
\]
is a complexification of a Cartan decomposition 
${\g '}={\gk '}\oplus {\gp '}$ with respect to 
a Cartan involution $\g '\ni X\mapsto {-}^tX\in \g '$ 
where ${}^tX$ means transpose of $X$. 

Since $w\in \gk '$, we see that
\begin{equation}
\pi_{(\nu ,\sigma )}(w)\chi_p=\sI p\chi_p\label{eqn:review001}
\end{equation}
from direct computation.
Here we denote the differential of $\pi_{(\nu ,\sigma )}$ again 
by $\pi_{(\nu ,\sigma )}$. 
The action of ${\gp '}_\mC$ is given in 
the following proposition. 
\begin{prop}\label{prop:str_sl2}
$\pi_{(\nu ,\sigma )}(x_\pm )\chi_p=(\nu +1\pm p)\chi_{p\pm 2}$.
\end{prop}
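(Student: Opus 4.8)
The plan is to exploit that $x_\pm $ are weight vectors for the adjoint action of $\gk '$, thereby reducing the statement to the determination of a single scalar, and then to pin down that scalar by evaluating at the identity. First I would record the bracket relations $[w,x_\pm ]=\pm 2\sI \, x_\pm $, which follow from a direct $2\times 2$ matrix multiplication. Combining these with the commutator formula $\pi_{(\nu ,\sigma )}(w)\pi_{(\nu ,\sigma )}(x_\pm )=\pi_{(\nu ,\sigma )}(x_\pm )\pi_{(\nu ,\sigma )}(w)+\pi_{(\nu ,\sigma )}([w,x_\pm ])$ and the eigenvalue relation \eqref{eqn:review001}, one finds
\[
\pi_{(\nu ,\sigma )}(w)\bigl(\pi_{(\nu ,\sigma )}(x_\pm )\chi_p\bigr)=\sI (p\pm 2)\,\pi_{(\nu ,\sigma )}(x_\pm )\chi_p ,
\]
so that $\pi_{(\nu ,\sigma )}(x_\pm )\chi_p$ lies in the $\sI (p\pm 2)$-eigenspace of $\pi_{(\nu ,\sigma )}(w)$, which is $\mC \cdot \chi_{p\pm 2}$. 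Hence $\pi_{(\nu ,\sigma )}(x_\pm )\chi_p=c_\pm (p)\,\chi_{p\pm 2}$ for a scalar $c_\pm (p)$, and it remains only to compute $c_\pm (p)$.

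To find $c_\pm (p)$ I would evaluate both sides at the identity $1_2\in G'$, using $\chi_{p\pm 2}(1_2)=\tilde \chi_{p\pm 2}(\kappa_0)=1$, so that $c_\pm (p)=\bigl(\pi_{(\nu ,\sigma )}(x_\pm )\chi_p\bigr)(1_2)$. Writing $x_\pm =H\pm \sI \, S$ with the real elements $H=\diag (1,-1)\in \ga '$ and $S=\left(\begin{smallmatrix}0&1\\1&0\end{smallmatrix}\right)\in \gp '$, the derivative $\bigl(\pi_{(\nu ,\sigma )}(H)\chi_p\bigr)(1_2)$ is computed from $\exp (tH)=a(e^t)\in A'$ and the defining left-equivariance $f(namx)=r^{\nu +1}\sigma (m)f(x)$, which gives the value $\nu +1$.

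For the $S$-term, the point is that $S$ does not lie in a single summand, so I would resolve it along the Iwasawa decomposition $\g '=\gn '\oplus \ga '\oplus \gk '$. A short computation gives $S=2E-w$, where $E=\left(\begin{smallmatrix}0&1\\0&0\end{smallmatrix}\right)$ spans $\gn '$. The $E$-part contributes $0$, because $\exp (tE)\in N'$ and $f$ is left $N'$-invariant, while the $w$-part contributes via \eqref{eqn:review001}; together they give $\bigl(\pi_{(\nu ,\sigma )}(S)\chi_p\bigr)(1_2)=-\sI p$. Assembling the two pieces yields $c_\pm (p)=(\nu +1)\pm \sI \,(-\sI p)=(\nu +1)\pm p$, as claimed.

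The bracket and exponential computations are mechanical; the only step demanding care is the scalar computation, where one must correctly split each $\gp '$-generator into its $N'$-, $A'$- and $K'$-Iwasawa components and then apply the left-equivariance to the $N'A'$-directions while applying \eqref{eqn:review001} to the $K'$-direction. Keeping this bookkeeping straight---the left-regular transformation law for the $N'A'$-part versus the right $K'$-action \eqref{eqn:review001} for the $\gk '$-part---is the main, though modest, obstacle.
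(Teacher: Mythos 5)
Your proof is correct and follows essentially the same route as the paper: the bracket relation $[w,x_\pm]=\pm 2\sI x_\pm$ to locate $\pi_{(\nu,\sigma)}(x_\pm)\chi_p$ in $\mC\cdot\chi_{p\pm 2}$, followed by evaluation at $1_2$ via the Iwasawa decomposition. Your intermediate splitting $x_\pm =H\pm\sI S$ with $S=2E-w$ reproduces exactly the paper's decomposition $x_\pm =\pm 2\sI E'+H'\mp\sI w$, so the computations coincide.
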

\begin{proof}
By the relations 
\[
[w,x_{\pm }]=\pm 2\sI x_{\pm},
\]
we have
\begin{equation}
\pi_{(\nu ,\sigma )}(w)(\pi_{(\nu ,\sigma )}(x_\pm )\chi_p)
=\sI (p\pm 2)(\pi_{(\nu ,\sigma )}(x_\pm )\chi_p).\label{eqn:review002}
\end{equation}
Here $[\cdot , \cdot ]$ is the bracket product. 
From the equations (\ref{eqn:review001}) and (\ref{eqn:review002}), 
we see that $\pi_{(\nu ,\sigma )}(x_\pm )\chi_p\in \mC \cdot \chi_{p\pm 2}$.

The elements $x_\pm$ of ${\gp '}_\mC$ have 
the following expressions according to Iwasawa decomposition 
${\g '}_\mC ={\gn '}_\mC \oplus {\ga '}_\mC \oplus {\gk '}_\mC$:
\[
x_\pm =\pm 2\sI E' +H'\mp \sI w
\]
where $E'=\left(\begin{array}{cc}
0&1\\
0&0
\end{array}\right)\in {\gn '}_\mC$
 and $H'=\diag (1,-1)\in {\ga '}_\mC $. 
From this expression and the definition of the space $H_{(\nu ,\sigma )}$, 
we have the value of $\pi_{(\nu ,\sigma )}(x_\pm )\chi_p$ at 
$1_2=\kappa_0\in K'$ as follows:
\begin{align*}
\pi_{(\nu ,\sigma )}(x_\pm )\chi_p(1_2)=&
\pm 2\sI \pi_{(\nu ,\sigma )}(E')\chi_p(1_2)
+\pi_{(\nu ,\sigma )}(H')\chi_p(1_2)
\mp \sI \pi_{(\nu ,\sigma )}(w)\chi_p(1_2)\\
=&0+(\nu +1)\mp \sI (\sI p)\\
=&\nu +1\pm p.
\end{align*}
Since $\chi_{p\pm 2}(1_2)=1$, we obtain 
$\pi_{(\nu ,\sigma )}(x_\pm )\chi_p=(\nu +1\pm p)\chi_{p\pm 2}$.
\end{proof}

From this proposition, we obtain the following.
\begin{prop}
\label{prop:discrete}
(i) Let $k$ be an integer such that $k\geq 2$. 
If $\nu =k-1$ and $\sigma (-1)=(-1)^k$, 
there is an injective homomorphism 
from $D_k^\pm $ to $\pi_{(\nu ,\sigma )}$. 
Here $D_k^+$ and $D_k^- $ are discrete series representations of 
$SL(2,\mR )$ with the Blattner parameter $k$ and $-k\in \mZ$, respectively. 
Moreover the quotient 
$(\g ',K')$-modules $\pi_{(\nu ,\sigma )}/(D_k^+\oplus D_k^- )$ is 
of dimension $k-1$. \\
(ii) Let $k$ be an integer such that $k\geq 2$. 
If $\nu =-k+1$ and $\sigma (-1)=(-1)^k$, 
the $(k-1)$-dimensional subspace $F_{k-2}$ of $H_{(\nu ,\sigma )}$ 
generated by 
\[
\{\chi_p\mid p=-k+2,\ -k+4,\ \cdots k-2\}
\]
is $G'$-invariant and is isomorphic to the symmetric tensor 
representation of degree $k-2$. Moreover the quotient 
$\pi_{(\nu ,\sigma )}/F_{k-2}$ is isomorphic to 
$D_k^+\oplus D_k^-$. \\
(iii) If $\nu =0$ and $\sigma (1_2)=-1$, $\pi_{(\nu ,\sigma )}$ is 
a direct sum of two irreducible representations, 
called limit of discrete series representations. \\
(iv) If $(\nu ,\sigma )$ is not in the cases of (i), (ii) and (iii), 
$\pi_{(\nu ,\sigma )}$ is irreducible.
\end{prop}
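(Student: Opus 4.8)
The plan is to extract the entire lattice of $(\g ',K')$-submodules directly from Proposition \ref{prop:str_sl2}. That formula shows that $x_\pm $ carries $\chi_p$ to $(\nu +1\pm p)\chi_{p\pm 2}$, while by (\ref{eqn:review001}) each $\chi_p$ is a $w$-eigenvector with the distinct eigenvalue $\sI p$. Since ${\g '}_\mC =\mC w\oplus \mC x_+\oplus \mC x_-$, every $(\g ',K')$-submodule is $K'$-stable and hence a sum of the isotypic lines $\mC \chi_p$; thus invariant subspaces correspond bijectively to subsets of the $K'$-spectrum that are stable under those transitions $\chi_p\mapsto \chi_{p\pm 2}$ whose coefficient $\nu +1\pm p$ is nonzero. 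First I would note that these coefficients vanish only at the two points $p=\nu +1$ (for $x_-$) and $p=-(\nu +1)$ (for $x_+$); these are the only places where a chain of $K'$-types can be interrupted.

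Next I would read off when such an interruption actually sits inside the spectrum. The spectrum is $2\mZ $ when $\sigma (-1_2)=1$ and $1+2\mZ $ when $\sigma (-1_2)=-1$, so a wall lies in the spectrum exactly when $\nu +1$ is an integer of the spectral parity; this happens precisely in the three families (i)--(iii), and otherwise every coefficient $\nu +1\pm p$ is nonzero on the spectrum. In that remaining case, starting from any $\chi_p$ one reaches every $\chi_q$ by alternately applying $x_+$ and $x_-$, so the module has no proper nonzero invariant subspace, which gives the irreducibility (iv). In each reducible family the two walls sit symmetrically at $p=\pm (\nu +1)$, and I would identify the resulting constituents by their $K'$-type supports: an infinite ray $\{p\geq k\}$ or $\{p\leq -k\}$ is the lowest- or highest-weight module $D_k^+$ or $D_k^-$, while a finite central block with $K'$-types $-k+2,\dots ,k-2$ is the irreducible $(k-1)$-dimensional representation, i.e.\ the symmetric tensor of degree $k-2$.

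The decisive and most delicate point, which separates (i) from (ii), is the \emph{direction} in which each wall blocks the flow of $K'$-types. In case (i), where $\nu =k-1$ and $\nu +1=k>0$, the vanishing is $x_-\chi_k=0$ and $x_+\chi_{-k}=0$, whereas $x_+\chi_{k-2}=(2k-2)\chi_k$ and $x_-\chi_{-k+2}=(2k-2)\chi_{-k}$ are nonzero; hence the central block feeds up into the outer rays but the rays cannot re-enter it, so $D_k^+\oplus D_k^-$ is a submodule and the central block of dimension $k-1$ is the quotient. In case (ii), where $\nu =-k+1$ and $\nu +1=-(k-2)\leq 0$, the vanishing is instead $x_+\chi_{k-2}=0$ and $x_-\chi_{-k+2}=0$, which seals the central block from both sides and makes $F_{k-2}$ a submodule; since $x_-\chi_k=(2-2k)\chi_{k-2}\neq 0$, the outer rays feed into it, so in the quotient the images of $\chi_{\pm k}$ become extreme weight vectors and the quotient is $D_k^+\oplus D_k^-$. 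Finally, the borderline $\nu =0$ with $\sigma (-1_2)=-1$ gives walls $x_-\chi_1=0$ and $x_+\chi_{-1}=0$ with no central block in the odd spectrum, so the spectrum splits cleanly as $\{p\geq 1\}\oplus \{p\leq -1\}$ into two irreducible limits of discrete series, which is (iii). The main work is thus bookkeeping the parities and, above all, tracking which of the two coefficients vanishes at each wall; once this is settled, the identifications of $D_k^\pm $ and of the symmetric tensor module follow routinely by matching extreme $K'$-types.
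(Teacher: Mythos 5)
Your proposal is correct and follows exactly the route the paper intends: the paper states Proposition \ref{prop:discrete} with no further argument as an immediate consequence of the contiguous relations $\pi_{(\nu ,\sigma )}(x_\pm )\chi_p=(\nu +1\pm p)\chi_{p\pm 2}$ of Proposition \ref{prop:str_sl2}, and your wall-and-ray bookkeeping on the $K'$-spectrum (in particular the sign analysis of which coefficient vanishes at each wall, which correctly separates case (i) from case (ii)) is precisely the standard expansion of that one-line derivation.
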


We are going to show the analogue of Proposition 
\ref{prop:str_sl2} for $SL(3,\mR )$ in Theorem \ref{th:main} 
and \ref{th:main2}. 

\section{Preliminaries}\label{sec:preliminaries}
\subsection{Groups and algebras.}\label{subsec:groups_and_algebra}

Let $G$ be the special linear group $SL(3,\mR )$ of degree three and 
$\g$ be its Lie algebra.
We define a Cartan involution $\theta $ of $G$ by 
$G\ni g \mapsto {}^t g^{-1}\in G$. 
Here $g^{-1}$ means the inverse of $g$. 
Then a maximal compact subgroup of $G$ is given by
\[
K=\{ g \in G \mid \theta (g)=g\} =SO(3).
\]

If we denote the differential of $\theta $ again by $\theta $, then we have 
$\theta (X)={-}^t X$ for $X\in \g $. 
Let $\gk$ and $\gp$ be the $+1$ and the 
$-1$ eigenspaces of $\theta$ in $\g$, respectively, that is, 
\begin{align*}
\gk &=\{ X\in \g \mid {}^tX=-X\} =\gs \go (3),&\gp=&\{ X\in \g \mid {}^tX=X\} .
\end{align*}
Then $\gk $ is the Lie algebra of $K$ and 
$\g $ has the Cartan decomposition $\g =\gk \oplus \gp $.

Put $\ga_0 =\{ \diag (t_1,t_2,t_3)\mid t_i\in \mR \ (1\leq i\leq 3),\ 
t_1+t_2+t_3=0 \}$. 
Then $\ga_0 $ is a maximal abelian subalgebra of $\gp $. 
For each $1\leq i\leq 3$, we define a linear form $e_i$ on $\ga_0$ by 
$\ga_0 \ni \diag (t_1,t_2,t_3)\mapsto t_i \in \mC $.
The set $\Sigma $ of the restricted roots for $(\ga_0 ,\g )$ is given by 
$\Sigma =\Sigma (\ga_0 ,\g )=\{ e_i-e_j \mid 1\leq i\neq j\leq 3 \}$, 
and the subset $\Sigma^+ =\{ e_i-e_j \mid 1\leq i<j\leq 3 \} $ 
forms a positive root system. 
For each $\alpha \in \Sigma $, we denote the restricted root space by 
$\g_\alpha $ and choose a restricted root vector $E_\alpha $ in $\g_\alpha $ 
as follows:
\begin{align*}
E_{e_1-e_2}=&
\left( \begin{array}{ccc}
0 & 1 & 0 \\
0 & 0 & 0 \\
0 & 0 & 0 
\end{array} \right),&
E_{e_1-e_3}=&
\left( \begin{array}{ccc}
0 & 0 & 1 \\
0 & 0 & 0 \\
0 & 0 & 0 
\end{array} \right),&
E_{e_2-e_3}=&
\left( \begin{array}{ccc}
0 & 0 & 0 \\
0 & 0 & 1 \\
0 & 0 & 0 
\end{array} \right),
\end{align*}
and $E_{-\alpha }={}^tE_\alpha $ for $\alpha \in \Sigma^+ $. 
If we put $\gn_0 =\bigoplus_{\alpha \in \Sigma^+ } \g_\alpha $, then $\g $ has 
an Iwasawa decomposition $\g =\gn_0 \oplus \ga_0 \oplus \gk $. 
Also we have $G=N_0A_0K$, where $N_0=\exp (\gn_0 )$ 
and $A_0=\exp (\ga_0 )$.

The group $G$ has three non-trivial standard parabolic subgroups 
$P_0,\ P_1,\ P_2$ with 
\begin{align*}
P_0=&\left\{\left(
\begin{array}{ccc}
*&*&*\\
0&*&*\\
0&0&*
\end{array}
\right)\in G\right\}, &
P_1=&\left\{\left(
\begin{array}{ccc}
*&*&*\\
0&*&*\\
0&*&*
\end{array}
\right)\in G\right\}, &
P_2=&\left\{\left(
\begin{array}{ccc}
*&*&*\\
*&*&*\\
0&0&*
\end{array}
\right)\in G\right\}.
\end{align*} 
Let $\gn_1,\ \gn_2$ be subalgebras of $\gn_0$ defined by
$\gn_1=\g_{e_1-e_2}\oplus \g_{e_1-e_3},\ 
\gn_2=\g_{e_1-e_3}\oplus \g_{e_2-e_3}.$
We take a basis $\{ H_1,H_2\}$ of $\ga_0 $ defined by
\begin{align*}
H_1=&\left(\begin{array}{ccc}
 1 & 0 & 0 \\
 0 & 0 & 0 \\
 0 & 0 &-1
\end{array}\right),&
H_2=&\left(\begin{array}{ccc}
 0 & 0 & 0 \\
 0 & 1 & 0 \\
 0 & 0 &-1
\end{array}\right), 
\end{align*}
and set $H^{(1)}=2H_1-H_2,\ H^{(2)}=H_1+H_2$.
we define subalgebras $\ga_1,\ \ga_2$ of $\ga_0$ by 
$\ga_1=\mR \cdot H^{(1)},\ 
\ga_2=\mR \cdot H^{(2)}.$
We specify Langland decompositions of $P_i=N_iA_iM_i\ (0\leq i\leq 2)$ by
\begin{align*}
M_0&=
\{ \diag (\varepsilon_1, \varepsilon_2,\varepsilon_1\varepsilon_2) 
\mid \varepsilon_i \in \{ \pm 1\} \ (1\leq i\leq 2)\} ,\\
M_1&=
\left\{\left.
\left(\begin{array}{cc}
\det (h)^{-1}&O_{1,2}\\
O_{2,1}&h
\end{array}\right)
\right|
h\in SL^{\pm}(2,\mR )
\right\},
\quad A_1=\exp (\ga_1),\quad N_1=\exp (\gn_1),\\
M_2&=
\left\{\left.
\left(\begin{array}{cc}
h&O_{2,1}\\
O_{1,2}&\det (h)^{-1}
\end{array}\right)
\right|
h\in SL^{\pm}(2,\mR )
\right\},
\quad A_2=\exp (\ga_2),\quad N_2=\exp (\gn_2).
\end{align*}
Here 
$SL^{\pm}(2,\mR )=\{g\in GL(2,\mR )\mid \det (g)=\pm 1\}$.
For $i=1,2$, let $\gm_i$ be a Lie algebra of $M_i$.

\subsection{Definition of the $P_i$-principal series representations of $G$}
\label{subsec:principal-series}

For $0\leq i\leq 2$, in order to define the $P_i$-principal series 
representation of $G$, we prepare the data $(\nu_i,\sigma_i)$ as follows.

For $\nu_0 \in \Hom_\mR (\ga_0 ,\mC)$, we define a coordinate 
$(\nu_{0,1},\nu_{0,2}) \in \mC^2$ by $\nu_{0,i}=\nu_0(H_i)\ (i=1,2)$.
Then the half sum $\rho_0 
=\frac{1}{2} \left( \sum_{\alpha \in \Sigma_+} \alpha \right)=e_1-e_3$
of the positive roots has coordinate $(\rho_{0,1},\rho_{0,2})=(2,1)$.
We define a quasicharacter $e^{\nu_0} \colon A_0\to \mC^\times $ by
\[
e^{\nu_0} (a)
=a_1^{\nu_{0,1}}a_2^{\nu_{0,2}} ,\quad 
a=\diag (a_1,a_2,a_3)\in A_0.
\]
We fix a character $\sigma_0 $ of $M_0$. 
$\sigma_0 $ is realized by 
$(\sigma_{0,1},\sigma_{0,2})\in \{ 0,1\}^{\oplus 2} $ 
such that 
\[
\sigma_0 (\diag (\varepsilon_1, \varepsilon_2, \varepsilon_1\varepsilon_2))
= \varepsilon_1^{\sigma_{0,1}} \varepsilon_2^{\sigma_{0,2}},\quad 
\varepsilon_1,\varepsilon_2\in \{\pm 1\}.
\]

For each $i=1,2$, we identify $\nu_i\in \Hom_\mR (\ga_i ,\mC)$ 
with a complex number $\nu_i(H^{(i)})\in \mC$. 
Let $\rho_i\ (i=1,2)$ be the half sums of positive roots 
whose root spaces are contained in $\gn_i$, i.e. 
$\rho_1=\frac{1}{2}(2e_1-e_2-e_3),\ 
\rho_2=\frac{1}{2}(e_1+e_2-2e_3)$. 
Then both $\rho_1$ and $\rho_2$ are identified with 3. 
We identify $M_i\ (i=1,2)$ with $SL^{\pm}(2,\mR)$ 
by natural isomorphisms 
$m_i\colon SL^{\pm}(2,\mR )\to M_i\ (i=1,2)$ defined by
\begin{align*}
m_1(h)&=
\left(\begin{array}{cc}
\det (h)^{-1}&O_{1,2}\\
O_{2,1}&h
\end{array}\right),&
m_2(h)=&
\left(\begin{array}{cc}
h&O_{2,1}\\
O_{1,2}&\det (h)^{-1}
\end{array}\right) &(h\in SL^{\pm}(2,\mR )).
\end{align*}
Then we fix a discrete series representation 
$\sigma_i=D_k=
\Ind^{SL^{\pm}(2,\mR)}_{SL(2,\mR)} (D^+_k)$ of $M_i\simeq SL^{\pm}(2,\mR)$
where $D^+_k$ is a discrete series representation of $SL(2,\mR )$
with the Blattner parameter $k\geq 2$.

\begin{defn}
\textit 
For $0\leq i\leq 2$, we define the $P_i$-principal series representation 
$\pi_{(\nu_i,\sigma_i)}$ of $G$ by 
\[
\pi_{(\nu_i,\sigma_i)}=\Ind_{P_i}^G 
(1_{N_i}\otimes e^{\nu_i +\rho_i }\otimes \sigma_i ),
\]
i.e. $\pi_{(\nu_i,\sigma_i)}$ is the right regular representation 
of $G$ on the space $H_{(\nu_i,\sigma_i)}$ which is the completion of 
\[
H_{(\nu_i,\sigma_i)}^\infty=
\left\{ f\colon G\to V_{\sigma_i} \text{ smooth } \left| 
\begin{array}{c} 
f(namx)=e^{\nu_i +\rho_i } (a)\sigma_i (m)f(x) \hphantom{==} \\
\text{ for }\ n\in N_i,\ a\in A_i,\ m\in M_i,\ x\in G
\end{array} \right. \right\}
\]
with respect to the norm
\[
\| f\|^2 
=\int_K\| f(k)\|_{\sigma_i}^2dk.
\]
Here $V_{\sigma_i}$ is a representation space of $\sigma_i$ and 
$\| \cdot \|_{\sigma_i}$ is its norm.
\end{defn}

\section{Representations of $K=SO(3)$}
\label{sec:K-modules}
\subsection{The spinor covering}
\label{subsec:spin_cover}
To describe the finite dimensional representations of $SO(3)$, 
the simplest way seems to be the one utilizing the double covering 
$\varphi \colon SU(2)=Spin(3)\to SO(3)$. 
We use the following realization of the double covering $\varphi$, 
which is introduced in \cite{MR2070374}.

The Hamilton quaternion algebra $\mH$ is realized in $M_2(\mC )$ by 
\[
\mH =\biggl\{ 
\left( \begin{array}{cc}
a & b  \\
-\bar{b} & \bar{a}  
\end{array} \right) \in M_2(\mC )\ 
\biggl| \ 
a,b\in \mC
\biggl\}.
\]
Then $SU(2)$ is the subgroup of the multiplicative group consisting of 
quaternions with reduced norm $1$, i.e. $SU(2)=\{ x\in \mH \mid \det x=1\}$. 
Let $\mP =\{x\in \mH \mid \tr x=0\}$ be the $3$-dimensional real 
Euclidean space consisting of pure quaternions. 
Then for each $x\in SU(2)$, the map 
$\mP \ni p \mapsto x\cdot p\cdot x^{-1}\in \mP$ 
preserve the Euclidean norm $p\mapsto \det p$ and the orientation, hence 
we have the homomorphism
\[
\varphi \colon SU(2)\to SO(\mP ,\det )\simeq SO(3),
\]
which is surjective, since the range is a connected group. 
The kernel of this homomorphism is given by $\{\pm 1_2 \}$. 
An explicit expression of the covering map $\varphi$ is given by 
\begin{equation*}
\varphi (x)
=\left( \begin{array}{ccc}
p^2+q^2-r^2-s^2 & -2(ps-qr) & 2(pr+qs) \\
2(ps+qr) & p^2-q^2+r^2-s^2 & -2(pq-rs) \\
-2(pr-qs) & 2(pq+rs) & p^2-q^2-r^2+s^2 \\
\end{array} \right) 
\end{equation*}
for 
$x=\left( \begin{array}{cc}
p+\sI q & r+\sI s   \\
-r+\sI s & p-\sI q   
\end{array} \right) \in SU(2)\ (p,q,r,s\in \mR )$. 

By the derivation $d\varphi \colon \gs \gu (2)\to \gs \go (3)$ of 
$\varphi $, the standard generators: 
\begin{align*}
u_1=&\left( \begin{array}{cc}
\sI & 0 \\
0 &-\sI 
\end{array} \right) ,&
u_2=&\left( \begin{array}{cc}
 0 & 1 \\
-1 & 0 
\end{array} \right) ,&
u_3=&\left( \begin{array}{cc}
 0 & \sI \\
 \sI & 0 
\end{array} \right) 
\end{align*}
are mapped to $-2K_{23},\ 2K_{13}, -2K_{12}$ with 
\begin{align*}
K_{23}=&
\left( \begin{array}{ccc}
0 & 0 & 0 \\
0 & 0 & 1 \\
0 &-1 & 0 
\end{array} \right),&
K_{13}=&
\left( \begin{array}{ccc}
0 & 0 & 1 \\
0 & 0 & 0 \\
-1& 0 & 0 
\end{array} \right),&
K_{12}=&
\left( \begin{array}{ccc}
0 & 1 & 0 \\
-1& 0 & 0 \\
0 & 0 & 0 
\end{array} \right) \in \gs \go (3),
\end{align*}
respectively.

\subsection{Representations of $SU(2)$}
\label{subsec:rep_su2}

The set of equivalence classes of 
the finite dimensional continuous representations of $SU(2)$ is 
exhausted by the symmetric tensor product 
$\tau_l\ (l\in\mZ_{\geq 0})$ of the representation 
$SU(2)\ni g\mapsto (v\mapsto g\cdot v)\in GL(\mC^2)$. 
We use the following realizations of those which are introduced in 
\cite{MR2070374}.

Let $V_l$ be the subspace consisting of degree $l$ homogeneous polynomials 
of two variables $x,y$ in the polynomial ring $\mC [x,y]$. For $g\in SU(2)$ 
with $g^{-1}=\left( \begin{array}{cc}
a & b  \\
-\bar{b} & \bar{a}  
\end{array} \right) $ 
and $f(x,y)\in V_l$ we set 
\[
\tau_l(g)f(x,y)=f(ax+by,-\bar{b}x+\bar{a}y).
\]
Passing to the Lie algebra $\gs \gu (2)$, the derivation of $\tau_l$, 
denoted by same symbol, is described as follows by using the 
standard basis $\{v_k=x^ky^{l-k}\mid 0\leq k\leq l\}$ and 
the standard generators $\{ u_1,u_2,u_3\}$. 
Namely we have
\begin{align*}
\tau_l(H)v_k=&(l-2k)v_k,&\tau_l(E)v_k=&-kv_{k-1},&\tau_l(F)v_k=&(k-l)v_{k+1}.
\end{align*}
Here $\{E,\ H,\ F\}$ is $\gs \gl_2$-triple defined by 
\[
H=-\sI u_1,\ E=\frac{1}{2}(u_2-\sI u_3),\ 
F=-\frac{1}{2}(u_2+\sI u_3) \in \gs \gu (2)_\mC =\gs \gl (2,\mC ).
\] 

The condition that $\tau_l$ defines a representation of $SO(3)$ by passing 
to the quotient with respect to $\varphi \colon SU(2)\to SO(3)$ 
is that $\tau_l(-1_2)=(-1)^l=1$, i.e. $l$ is even. 
For $l\in \mZ_{\geq 0}$, we denote 
the irreducible representation of $SO(3)$ 
induced from $(\tau_{2l},V_{2l})$ again by 
$(\tau_{2l},V_{2l})$.

\subsection{The adjoint representation of $K$ on $\gp_\mC $}

It is known that $\gp_\mC $ becomes a $K$-module via 
the adjoint action of $K$. 
Concerning this, we have the following lemma. 
\begin{lem}\label{lem:ad_K_p}
\textit{ 
Let $\{ w_j\mid 0\leq j\leq 4\} $ be the standard basis of 
$(\tau_4,V_4)$ and 
$\{ X_j\mid 0\leq j\leq 4\} $ be a basis of $\gp_\mC $ 
defined as follows:
\begin{align*}
X_0=&
\left( \begin{array}{ccc}
0 & 0 & 0 \\
0 & 1 & -\sI \\
0 &-\sI & -1 
\end{array} \right),&
X_1=&
-\frac{1}{2}
\left( \begin{array}{ccc}
0 & \sI & 1 \\
\sI & 0 & 0 \\
1 & 0 & 0 
\end{array} \right),\\
X_2=&
-\frac{1}{3}
\left( \begin{array}{ccc}
2 & 0 & 0 \\
0 &-1 & 0 \\
0 & 0 &-1 
\end{array} \right) ,&
X_3=&
-\frac{1}{2}
\left( \begin{array}{ccc}
0 & \sI & -1 \\
\sI & 0 & 0 \\
-1 & 0 & 0 
\end{array} \right),\\
X_4=&
\left( \begin{array}{ccc}
0 & 0 & 0 \\
0 & 1 & \sI \\
0 &\sI & -1 
\end{array} \right) .
\end{align*}
Then via the unique isomorphism $V_4$ and $\gp_\mC$ as $K$-modules 
we have the identification $w_j=X_j\ (0\leq j\leq 4)$.
}
\end{lem}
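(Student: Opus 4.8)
The plan is to produce the linear isomorphism $\Phi\colon V_4\to\gp_\mC$ determined by $\Phi(w_j)=X_j$ $(0\le j\le 4)$ and to show that it intertwines the $K$-actions, the action on $V_4$ being $\tau_4$ and the action on $\gp_\mC$ being the adjoint action of $K$. Since $\varphi\colon SU(2)\to SO(3)$ is a surjective homomorphism from the connected group $SU(2)$ and $d\varphi$ is a Lie algebra isomorphism, $K$-equivariance of $\Phi$ is equivalent to equivariance for the infinitesimal action of $\gs \gu (2)$, and hence, after complexifying, for the action of $\gs \gu (2)_\mC =\gs \gl (2,\mC )$. It therefore suffices to match the action of the generators $H,E,F$: on $V_4$ these act by the formulas of Section~\ref{subsec:rep_su2}, while on $\gp_\mC$ an element $Y$ sends $X$ to $[d\varphi(Y),X]$.

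First I would record the images of the triple under $d\varphi$. Using $d\varphi(u_1)=-2K_{23}$, $d\varphi(u_2)=2K_{13}$ and $d\varphi(u_3)=-2K_{12}$ from Section~\ref{subsec:spin_cover}, together with $H=-\sI u_1$, $E=\tfrac12(u_2-\sI u_3)$ and $F=-\tfrac12(u_2+\sI u_3)$, one gets
\[
d\varphi(H)=2\sI K_{23},\qquad d\varphi(E)=K_{13}+\sI K_{12},\qquad d\varphi(F)=-K_{13}+\sI K_{12}.
\]
The identities to be established are then
\[
[d\varphi(H),X_j]=(4-2j)X_j,\qquad [d\varphi(E),X_j]=-jX_{j-1},\qquad [d\varphi(F),X_j]=(j-4)X_{j+1},
\]
which must reproduce $\tau_4(H)w_j=(4-2j)w_j$, $\tau_4(E)w_j=-jw_{j-1}$ and $\tau_4(F)w_j=(j-4)w_{j+1}$.

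Rather than verifying all of these directly, I would exploit the $\gs \gl (2,\mC )$ structure to cut the work down to a highest weight computation together with a single lowering chain. First I would check that $X_0$ is a highest weight vector of weight $4$, i.e.\ $[d\varphi(H),X_0]=4X_0$ and $[d\varphi(E),X_0]=0$; both reduce to a short commutator on the lower right $2\times2$ block. Next I would compute the four lowering relations $[d\varphi(F),X_j]=(j-4)X_{j+1}$ for $j=0,1,2,3$, for instance $[d\varphi(F),X_0]=-4X_1$. Since the $X_j$ are linearly independent (they form a basis of $\gp_\mC$), these relations show that $X_0$ generates all of $\gp_\mC$ and that $\gp_\mC$ is the five-dimensional irreducible module of highest weight $4$, with $\{X_j\}$ the basis obtained from $X_0$ by successive application of $F$ with exactly the coefficients prescribed by $\tau_4$. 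As the standard basis $\{w_j\}$ of $V_4$ is built from $w_0$ by the identical recursion, the assignment $w_j\mapsto X_j$ is precisely the $\gs \gl (2,\mC )$-isomorphism $V_4\to\gp_\mC$, which is moreover unique up to scalar by Schur's lemma; the remaining relations for $H$ and $E$, and the top relation $[d\varphi(F),X_4]=0$, then follow automatically. The only obstacle is the bookkeeping in these explicit $3\times3$ matrix commutators, which is entirely routine once $d\varphi(H),d\varphi(E),d\varphi(F)$ are in hand.
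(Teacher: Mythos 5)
Your proposal is correct and takes essentially the same route as the paper: the paper's proof likewise reduces the $K$-equivariance to computing the adjoint actions of $d\varphi (H)$, $d\varphi (E)$, $d\varphi (F)$ on the basis $\{X_j\}$ and comparing with the formulas for $\tau_4$ from Subsection \ref{subsec:rep_su2}, verifying the whole table by direct computation. Your only deviation is the standard $\gs \gl (2,\mC )$ highest-weight shortcut (checking that $X_0$ is a highest weight vector of weight $4$ plus the four lowering relations, with the $H$- and $E$-relations and $[d\varphi (F),X_4]=0$ then forced by weight considerations), which soundly trims the number of explicit commutators but is a refinement of, not an alternative to, the paper's argument.
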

\begin{proof}
By direct computation, 
we have the following table of the adjoint actions of the basis 
$\{d\varphi (E),\ d\varphi (H),\ d\varphi (F)\}$ of 
$\gk_\mC$ on the basis $\{X_j\mid 0\leq j\leq 4\}$ of $\gp_\mC$.
\begin{gather*}
\begin{array}{|c|c|c|c|c|c|}
\hline
&X_0&X_1&X_2&X_3&X_4\\ \hline
d\varphi (H)&4X_0&2X_1&0&-2X_3&-4X_4\\ \hline
d\varphi (E)&0&-X_0&-2X_1&-3X_2&-4X_3\\ \hline
d\varphi (F)&-4X_1&-3X_2&-2X_3&-1X_4&0\\ \hline
\end{array}
\\
\text{TABLE. The adjoint actions of $\gk_\mC$ on 
the basis $\{X_j\mid 0\leq j\leq 4\}$ of $\gp_\mC$.}
\end{gather*}
Comparing the actions in the above table with the actions 
in Subsection \ref{subsec:rep_su2}, we have the assertion. 
\end{proof}

\subsection{Clebsch-Gordan coefficients for the representations of 
$\gs \gl (2,\mC )$ with respect to standard basis}

In the later sections, we need irreducible decomposition of 
the tensor product $V\otimes_\mC \gp_\mC $ 
as $K$-modules for each $K$-type $(\tau ,V)$ of $\pi_{(\nu_i,\sigma_i)}$. 
From the previous arguments, it suffices to consider 
the irreducible decomposition of $V_l\otimes_\mC V_4$ 
as $\gs \gl (2,\mC )=\gs \gu (2)_\mC $-modules 
for arbitrary non-negative integer $l$. 

Generically, the tensor product $V_l\otimes_\mC V_4$ 
has five irreducible components 
$V_{l+4},\ V_{l+2},\ V_{l},\ V_{l-2}$ and $V_{l-4}$. 
Here some components may vanish. 
We give an explicit expression of a nonzero 
$\gs \gl (2,\mC )$-homomorphism from each irreducible component to 
$V_l\otimes_\mC V_4$ as follows.

\begin{prop}
\label{prop:injector}
\textit 
Let $\{v_{k}^{(l)}\mid 0\leq k\leq l\} $ be the standard basis of $V_l$ 
for $l\in \mZ_{\geq 0}$. 
We put $v_k^{(l)}=0$ when $k<0$ or $k>l$. 

If $V_{l+2m}$-component of $V_l\otimes_\mC V_4$ does not vanish, 
then we define linear maps 
$I^{l}_{2m}\colon V_{l+2m}\to V_l\otimes_\mC V_4\ (-2\leq m \leq 2)$ by 
\[
I^{l}_{2m}(v_k^{(l+2m)})
=\sum_{i=0}^{4}A_{[l,2m;k,i]}\cdot v_{k+2-m-i}^{(l)}\otimes w_i.
\]

Here the coefficients $A_{[l,2m;k,i]}=a(l,2m;k,i)/d(l,2m)$ are defined by 
following formulae. 

\noindent {\bf Formula 1:} 
The coefficients of 
$I^{l}_{4}\colon V_{l+4}\to V_l\otimes_\mC V_4$ 
are given as follows:
\begin{align*}
a(l,4;k,0)=&(l+4-k)(l+3-k)(l+2-k)(l+1-k),\\
a(l,4;k,1)=&4(l+4-k)(l+3-k)(l+2-k)k,\\
a(l,4;k,2)=&6(l+4-k)(l+3-k)k(k-1),\\
a(l,4;k,3)=&4(l+4-k)k(k-1)(k-2),\\
a(l,4;k,4)=&k(k-1)(k-2)(k-3),\\
d(l,4)=&(l+4)(l+3)(l+2)(l+1).
\end{align*}
\noindent {\bf Formula 2:} 
The coefficients of 
$I^{l}_{2}\colon V_{l+2}\to V_l\otimes_\mC V_4$ are given as follows:
\begin{align*}
a(l,2;k,0)=&(l+2-k)(l+1-k)(l-k),&
a(l,2;k,1)=&-(l+2-k)(l+1-k)(l-4k),\\
a(l,2;k,2)=&-3(l+2-k)(l-2k+2)k,&
a(l,2;k,3)=&-(3l-4k+8)k(k-1),\\
a(l,2;k,4)=&-k(k-1)(k-2),&
d(l,2)=&(l+2)(l+1)l.
\end{align*}
\noindent {\bf Formula 3:} 
The coefficients of 
$I^{l}_{0}\colon  V_{l}\to V_l\otimes_\mC V_4$ are given as follows:
\begin{align*}
a(l,0;k,0)=&(l-k)(l-1-k),&
a(l,0;k,1)=&-2(l-k)(l-2k-1),\\
a(l,0;k,2)=&(l^2-6kl+6k^2-l),&
a(l,0;k,3)=&2(l-2k+1)k,\\
a(l,0;k,4)=&k(k-1),&
d(l,0)=&l(l-1).
\end{align*}
\noindent {\bf Formula 4: }
The coefficients of 
$I^{l}_{-2}\colon  V_{l-2}\to V_l\otimes_\mC V_4$ are given as follows:
\begin{align*}
a(l,-2;k,0)=&(l-k-2),&
a(l,-2;k,1)=&-(3l-4k-6),&
a(l,-2;k,2)=&3(l-2k-2),\\
a(l,-2;k,3)=&-(l-4k-2),&
a(l,-2;k,4)=&-k,&
d(l,-2)=&l-2.
\end{align*}
\noindent {\bf Formula 5:} 
The coefficients of 
$I^{l}_{-4}\colon  V_{l-4}\to V_l\otimes_\mC V_4$ are given as follows:
\begin{align*}
a(l,-4;k,0)=&1,&
a(l,-4;k,1)=&-4,&
a(l,-4;k,2)=&6,\\
a(l,-4;k,3)=&-4,&
a(l,-4;k,4)=&1,& 
d(l,-4)=&1.
\end{align*}

Then $I^{l}_{2m}$ is a generator of 
$\Hom_{\gs \gl (2,\mC )} (V_{l+2m},V_l\otimes_\mC V_4)$, 
which is unique up to scalar multiple. 
\end{prop}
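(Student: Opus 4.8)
The plan is to exploit that the entire statement is governed by the Clebsch--Gordan rule and Schur's lemma, so that the only real content is the verification that each $I^{l}_{2m}$ is a nonzero $\gs\gl(2,\mC)$-homomorphism. Indeed, since $V_l\otimes_\mC V_4\simeq\bigoplus_{m=-2}^{2}V_{l+2m}$ with each surviving summand occurring exactly once, Schur's lemma gives $\dim_\mC\Hom_{\gs\gl(2,\mC)}(V_{l+2m},V_l\otimes_\mC V_4)\le 1$, with equality precisely when $V_{l+2m}$ does not vanish. Hence, once $I^{l}_{2m}$ is shown to be a nonzero homomorphism, it is automatically a generator, unique up to scalar. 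Nonvanishing is immediate from the normalization: using the formulae one checks $A_{[l,2m;0,0]}=a(l,2m;0,0)/d(l,2m)=1$, so $I^{l}_{2m}(v_0^{(l+2m)})$ contains the nonzero term $v_{2-m}^{(l)}\otimes w_0$.

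First I would dispose of $H$-equivariance by a weight count. Recall from Lemma \ref{lem:ad_K_p} and Subsection \ref{subsec:rep_su2} that $w_i$ has $H$-weight $4-2i$ and $v_k^{(l)}$ has $H$-weight $l-2k$. Thus the term $v_{k+2-m-i}^{(l)}\otimes w_i$ has weight $(l-2(k+2-m-i))+(4-2i)=l+2m-2k$, which is exactly the weight of $v_k^{(l+2m)}$; this also shows that the index $k+2-m-i$ appearing in the definition of $I^{l}_{2m}$ is the only one compatible with $w_i$. Consequently $I^{l}_{2m}$ preserves weights and commutes with $\tau(H)$ exactly.

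It then remains to check that $I^{l}_{2m}$ intertwines $\tau(E)$ and $\tau(F)$; since $\{E,H,F\}$ generate $\gs\gl(2,\mC)$ this suffices. Writing $\tau(F)$ on $V_l\otimes_\mC V_4$ as $\tau_l(F)\otimes\id+\id\otimes\tau_4(F)$ and using $\tau_l(F)v_k=(k-l)v_{k+1}$, $\tau_4(F)w_i=(i-4)w_{i+1}$, the identity $I^{l}_{2m}\circ\tau_{l+2m}(F)=\tau(F)\circ I^{l}_{2m}$ collapses, after collecting the coefficient of each basis vector $v_{k+3-m-i}^{(l)}\otimes w_i$, to the three-term recurrence
\[
(k-l-2m)\,a(l,2m;k+1,i)=(k+2-m-i-l)\,a(l,2m;k,i)+(i-5)\,a(l,2m;k,i-1),
\]
to be read for all $k$ and all $0\le i\le 4$ (with $a(l,2m;k,-1)=0$). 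I would verify this by substituting the explicit polynomial expressions from Formulae 1--5; then $\tau(E)$-equivariance follows for free from the single identity $\tau(E)\,I^{l}_{2m}(v_0^{(l+2m)})=0$ by a standard commutator argument (the map $[\,\tau(E),I^{l}_{2m}\,]$ commutes with $\tau(F)$ by the Jacobi identity together with $H$- and $F$-equivariance, and annihilates the cyclic vector $v_0^{(l+2m)}$, hence vanishes identically).

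The bulk of the work, and the only genuine obstacle, is the verification of this recurrence as a polynomial identity in $k$ (with $l,m$ as parameters) in each of the five cases. The extreme components are painless: for $2m=-4$ the coefficients are the constants $(-1)^i\binom{4}{i}$ and the recurrence reduces to the elementary identity $i\binom{4}{i}=(5-i)\binom{4}{i-1}$, while for $2m=4$ they are products of falling factorials, which is precisely the pattern produced by applying $\tau(F)$ repeatedly to the highest weight vector $v_0^{(l)}\otimes w_0$. The three middle cases $2m=2,0,-2$ carry the real computational weight, since there the $a(l,2m;k,i)$ are genuine polynomials of degree two or three in $k$ and $l$, and each value of $i$ produces a separate identity to be matched term by term. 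I expect no conceptual difficulty, only careful bookkeeping; the normalization by $d(l,2m)$ has been arranged precisely so that these identities hold whenever $V_{l+2m}$ is present, equivalently whenever $d(l,2m)\neq 0$, with the sole exception of the case $2m=-4$, where instead one needs $l\ge 4$ for the target indices to lie in range.
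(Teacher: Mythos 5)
Your proposal is correct and takes essentially the same route as the paper: both proofs come down to (a) the single highest-weight identity $(\tau_l\otimes\tau_4)(E)\,I^{l}_{2m}(v_0^{(l+2m)})=0$ (equivalently $(2-m-i)A_{[l,2m;0,i]}+(i+1)A_{[l,2m;0,i+1]}=0$) together with the weight count for $H$, and (b) the $F$-intertwining relation for all $k$, verified by direct substitution of the polynomial formulae --- your three-term recurrence $(k-l-2m)\,a(l,2m;k+1,i)=(k+2-m-i-l)\,a(l,2m;k,i)+(i-5)\,a(l,2m;k,i-1)$ is exactly the coefficient form of the paper's check, and uniqueness up to scalar follows in both from the multiplicity-free Clebsch--Gordan decomposition. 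Your commutator argument deriving full $E$-equivariance from the $v_0$ identity is only a minor logical repackaging of the paper's observation that $I^{l}_{2m}(v_0^{(l+2m)})$ is a highest weight vector generating the $V_{l+2m}$-component, so the two proofs are in substance identical.
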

\begin{proof}
We have
\begin{align*}
&(\tau_{l}\otimes \tau_{4})(E)\circ I^{l}_{2m}(v_0^{(l+2m)})\\
&=\sum_{i=0}^{4}A_{[l,2m;0,i]}\cdot 
(\tau_{l}(E)v_{2-m-i}^{(l)})\otimes w_i
+\sum_{i=0}^{4}A_{[l,2m;0,i]}\cdot 
v_{2-m-i}^{(l)}\otimes (\tau_{4}(E)w_i)\\
&=\sum_{i=0}^{4}A_{[l,2m;0,i]}\cdot 
(-(2-m-i)v_{1-m-i}^{(l)})\otimes w_i
+\sum_{i=1}^{4}A_{[l,2m;0,i]}\cdot 
v_{2-m-i}^{(l)}\otimes (-iw_{i-1})\\
&=-\sum_{i=0}^{4}
((2-m-i)A_{[l,2m;0,i]}+(i+1)A_{[l,2m;0,i+1]})\cdot 
v_{1-m-i}^{(l)}\otimes w_i.
\end{align*}
Here we put $A_{[l,2m;0,5]}=0$.
By direct computation, we confirm 
\[
(2-m-i)A_{[l,2m;0,i]}+(i+1)A_{[l,2m;0,i+1]}=0
\]
for $-2\leq m\leq 2$ and $0\leq i\leq 4$. Hence 
\[
(\tau_{l}\otimes \tau_{4})(E)\circ I^{l}_{2m}(v_0^{(l+2m)})=0.
\]
Moreover, we have
\[
(\tau_{l}\otimes \tau_{4})(H)\circ I^{l}_{2m}(v_0^{(l+2m)})
=(l+2m)I^{l}_{2m}(v_0^{(l+2m)}),
\]
since
\begin{align*}
(\tau_{l}\otimes \tau_{4})(H)(v_i^{(l)}\otimes w_j)
&=(\tau_l(H)v_i^{(l)})\otimes w_j+v_i^{(l)}\otimes (\tau_4(H)w_j)\\
&=(l+4-2i-2j)v_i^{(l)}\otimes w_j.
\end{align*}
This means $I^{l}_{2m}(v_0^{(l+2m)})$ is the highest weight vector 
of $V_{l+2m}$-component of $V_l\otimes_\mC V_4$ 
with respect to a Borel subalgebra $(\mC \cdot H)\oplus (\mC \cdot E)$ of 
$\gs \gl (2,\mC )$.  

Therefore, in order to complete the proof, it suffices to confirm 
\[
(\tau_{l}\otimes \tau_{4})(F)\circ I^{l}_{2m}(v_k^{(l+2m)})
=I^{l}_{2m}\circ \tau_{l+2m}(F)(v_k^{(l+2m)})
\]
for each $0\leq k\leq l+2m$.
%\begin{align*}
%&(\tau_{l}\otimes \tau_{4})(F)\circ I^{l}_{2m}(v_k^{(l+2m)})\\
%&=\sum_{i=0}^{4}A_{[l,2m;k,i]}\cdot 
%(\tau_{l}(F)v_{k+2-m-i}^{(l)})\otimes w_i
%+\sum_{i=0}^{4}A_{[l,2m;k,i]}\cdot 
%v_{k+2-m-i}^{(l)}\otimes (\tau_{4}(F)w_i)\\
%&=\sum_{i=0}^{4}A_{[l,2m;k,i]}\cdot 
%((k+2-m-i-l)v_{k+3-m-i}^{(l)})\otimes w_i
%+\sum_{i=0}^{3}A_{[l,2m;k,i]}\cdot 
%v_{k+2-m-i}^{(l)}\otimes ((i-4)w_{i+1})\\
%&=\sum_{i=0}^{4}
%((k+2-m-i-l)A_{[l,2m;k,i]}+(i-5)A_{[l,2m;k,i-1]})\cdot 
%v_{k+3-m-i}^{(l)}\otimes w_i.
%\end{align*}
%Here we put $A_{[l,2m;0,-1]}=0$.

We confirm these equations by direct computation. 
\end{proof}

The coefficients $A_{[l,2m;k,i]}$ in the above proposition 
have the following relations.
\begin{lem}
\label{lem:rel_clebsch}
The coefficients $A_{[l,2m;k,i]}$ in Proposition \ref{prop:injector} 
satisfy following relations:
\begin{align*}
&A_{[l,2m;l+2m-k,0]}
=(-1)^mA_{[l,2m;k,4]},\quad A_{[l,2m;l+2m-k,2]}
=(-1)^mA_{[l,2m;k,2]},\\
&3\{(k-m+1)A_{[l,2m;k,1]}
+(l-k+m+1)A_{[l,2m;k,3]}\}
=(ml+m^2+m-6)A_{[l,2m;k,2]}.
\end{align*}
for $-2\leq m\leq 2$ and $0\leq k\leq l+2m$. 
\end{lem}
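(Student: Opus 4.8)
My plan is to derive all three relations from two canonical $\gs \gl (2,\mC )$-structures on $V_l\otimes_\mC V_4$, rather than by expanding Formulas 1--5 directly. First I would treat the symmetry relations. Introduce the flip $\phi_n\colon V_n\to V_n$, $\phi_n(v_k^{(n)})=v_{n-k}^{(n)}$, and check from $\tau_n(H)v_k=(n-2k)v_k$, $\tau_n(E)v_k=-kv_{k-1}$, $\tau_n(F)v_k=(k-n)v_{k+1}$ that it satisfies $\phi_n\circ \tau_n(X)=\tau_n(\psi(X))\circ \phi_n$, where $\psi$ is the involutive automorphism of $\gs \gl (2,\mC )$ with $\psi(E)=F$, $\psi(F)=E$, $\psi(H)=-H$. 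Since $\psi^2=\id$ and $\phi_l\otimes \phi_4$ obeys the same twisted intertwining relation on $V_l\otimes_\mC V_4$, the composite $(\phi_l\otimes \phi_4)\circ I^{l}_{2m}\circ \phi_{l+2m}$ is again an $\gs \gl (2,\mC )$-homomorphism $V_{l+2m}\to V_l\otimes_\mC V_4$. By the uniqueness up to scalar in Proposition \ref{prop:injector} (valid exactly when the $V_{l+2m}$-component, hence the coefficient, is defined), it equals $c_m\,I^{l}_{2m}$ for some $c_m\in \mC$. Evaluating on $v_k^{(l+2m)}$ and using $\phi_4(w_i)=w_{4-i}$, the reindexing $i\mapsto 4-i$ combined with $\phi_l$ gives
\[
A_{[l,2m;l+2m-k,4-i]}=c_m\,A_{[l,2m;k,i]}\qquad (0\le i\le 4).
\]
Taking $i=4$ yields the first relation and $i=2$ the second, provided $c_m=(-1)^m$; this sign is fixed from $c_m^2=1$ together with a single evaluation (e.g.\ Formula 5 gives $c_{-2}=1$ and Formula 2 gives $c_1=-1$).

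For the third relation I would use the polarized Casimir $C=\frac{1}{2}H\otimes H+E\otimes F+F\otimes E$ on $V_l\otimes_\mC V_4$, i.e.\ the cross term of the comultiplied Casimir $\Omega=\frac{1}{4}H^2+\frac{1}{2}(EF+FE)$. Being $\gs \gl (2,\mC )$-invariant, $C$ acts on the $V_{l+2m}$-component as a scalar $\lambda_m$, and comparing Casimir eigenvalues ($\frac{1}{4}n(n+2)$ on $V_n$) gives
\[
\lambda_m=\tfrac{1}{4}\bigl[(l+2m)(l+2m+2)-l(l+2)\bigr]-6=ml+m^2+m-6.
\]
Applying $C\circ I^{l}_{2m}=\lambda_m\,I^{l}_{2m}$ to $v_k^{(l+2m)}$ and reading off the coefficient of $v_{k-m}^{(l)}\otimes w_2$, the diagonal term drops out because $\tau_4(H)w_2=0$, while the $E\otimes F$ and $F\otimes E$ terms (which reach $w_2$ from $w_1$ and $w_3$, and reach $v_{k-m}^{(l)}$ via $\tau_l(E)v_{k+1-m}^{(l)}$ and $\tau_l(F)v_{k-1-m}^{(l)}$) contribute exactly $3(k-m+1)A_{[l,2m;k,1]}$ and $3(l-k+m+1)A_{[l,2m;k,3]}$. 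This is precisely the third relation.

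The main obstacle is purely organizational: matching the shifted indices when extracting coefficients (both the $i\mapsto 4-i$ reflection in the first argument and the shifts $w_{i\pm 1}\to w_2$, $v_{k\pm 1-m}^{(l)}\to v_{k-m}^{(l)}$ in the second), and pinning down the sign $c_m=(-1)^m$ uniformly in $m$. None of this is deep. Should the conceptual route be deemed less transparent than desired, each relation reduces, after clearing the common denominator $d(l,2m)$, to a polynomial identity in $k$ and $l$ that can be checked directly from Formulas 1--5 for each of the five values $m\in\{-2,-1,0,1,2\}$.
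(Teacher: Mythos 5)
Your proof is correct, and it takes a genuinely different route from the paper: the paper disposes of Lemma \ref{lem:rel_clebsch} with the single line that the relations ``are obtained by direct computation,'' i.e.\ by verifying each identity from Formulas 1--5 separately for the five values of $m$, whereas you derive all three uniformly from structure. Your flip argument is sound: with the paper's conventions ($\tau_n(H)v_k=(n-2k)v_k$, $\tau_n(E)v_k=-kv_{k-1}$, $\tau_n(F)v_k=(k-n)v_{k+1}$) one indeed has $\phi_n\circ\tau_n(X)=\tau_n(\psi(X))\circ\phi_n$, so $(\phi_l\otimes\phi_4)\circ I^{l}_{2m}\circ\phi_{l+2m}$ is again an intertwiner and uniqueness gives $A_{[l,2m;l+2m-k,4-i]}=c_m\,A_{[l,2m;k,i]}$; and your Casimir computation is also right: the cross term $C$ acts on the $V_{l+2m}$-component by $\frac{1}{4}\{(l+2m)(l+2m+2)-l(l+2)\}-6=ml+m^2+m-6$, and since $\tau_4(F)w_1=-3w_2$, $\tau_4(E)w_3=-3w_2$ and $\tau_4(H)w_2=0$, extracting the $v_{k-m}^{(l)}\otimes w_2$-coefficient yields exactly $3(k-m+1)A_{[l,2m;k,1]}+3(l-k+m+1)A_{[l,2m;k,3]}$ on the left. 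Two points deserve tightening. First, $c_m^2=1$ together with ``a single evaluation'' does not fix the sign for all $m$ simultaneously, since a priori $c_m$ depends on $m$; you need one (trivial) coefficient check for each of the five values of $m$, e.g.\ comparing $A_{[l,2m;l+2m,4]}$ with $A_{[l,2m;0,0]}$ in each formula. Second, reading off the coefficient of $v_{k-m}^{(l)}\otimes w_2$ is only literally legitimate when $0\le k-m\le l$; at edge values of $k$ the relevant basis vectors vanish and the comparison degenerates, but this is repaired either by noting that the coefficients $A_{[l,2m;k,i]}$ in Formulas 1--5 vanish exactly when their attached basis vector leaves the admissible range, or by your own closing observation that, after clearing $d(l,2m)$, each relation is a polynomial identity in $k$ and $l$ and so extends from the generic range. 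What your route buys is conceptual transparency --- it explains the otherwise unmotivated constant $ml+m^2+m-6$ as a difference of Casimir eigenvalues and the sign $(-1)^m$ as the parity of the flip, and it would generalize to tensor factors other than $V_4$ --- while the paper's brute-force check needs no auxiliary structure and certifies the edge cases automatically.
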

\begin{proof}
These are obtained by direct computation.
\end{proof}

\subsection{The dual representation of $(\tau_l,V_l)$}
We denote by $(\tau^* ,V^*)$ the dual representation of $(\tau ,V)$. 
Here we note that $V_l^*$ is equivalent to $V_l$ as $SU(2)$-modules, 
since irreducible $l+1$-dimensional representation of $SU(2)$ 
is unique up to isomorphism. 

\begin{lem}
\label{lem:dual_Krep}
Let $\{v_k^{(l)*}\mid 0\leq k\leq l\} $ is the dual basis of 
the standard basis $\{v_k^{(l)}\mid 0\leq k\leq l\} $.
Via the unique isomorphism between $V_l$ and $V_l^*$ as $K$-modules 
we have the identification
\[
v_k^{(l)}=(-1)^k\frac{(l-k)!k!}{l!}v_{l-k}^{(l)*}
\]
for $0\leq k\leq l$.
\end{lem}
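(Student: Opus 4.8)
The plan is to pin down the (unique up to scalar) $K$-isomorphism $\Phi\colon V_l\to V_l^*$ by working in the complexified Lie algebra $\gk_\mC\simeq \gs\gl (2,\mC )$ and comparing the action of the $\gs\gl_2$-triple $\{E,H,F\}$ on the two bases. Since $V_l$ is irreducible, $\Hom_K(V_l,V_l^*)$ is one-dimensional by Schur's lemma, so it suffices to exhibit a single nonzero intertwiner and read off its matrix coefficients; the stated formula will then be this intertwiner, normalized by $v_0^{(l)}\mapsto v_l^{(l)*}$.

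First I would compute the action of $\{E,H,F\}$ on the dual basis. Recalling that the contragredient action satisfies $(\tau_l^*(X)\xi )(v)=-\xi (\tau_l(X)v)$ for $X\in \gs\gl (2,\mC )$, and substituting the formulas $\tau_l(H)v_k^{(l)}=(l-2k)v_k^{(l)}$, $\tau_l(E)v_k^{(l)}=-kv_{k-1}^{(l)}$, $\tau_l(F)v_k^{(l)}=(k-l)v_{k+1}^{(l)}$ from Subsection \ref{subsec:rep_su2}, a short pairing computation gives
\begin{align*}
\tau_l^*(H)v_k^{(l)*}&=(2k-l)v_k^{(l)*}, &
\tau_l^*(E)v_k^{(l)*}&=(k+1)v_{k+1}^{(l)*}, &
\tau_l^*(F)v_k^{(l)*}&=(l-k+1)v_{k-1}^{(l)*}.
\end{align*}

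Next, comparing $H$-weights forces the shape of $\Phi$: the vector $v_k^{(l)}$ has weight $l-2k$, while $v_j^{(l)*}$ has weight $2j-l$, so $\Phi (v_k^{(l)})$ must be a scalar multiple $c_k\hs v_{l-k}^{(l)*}$. To determine the $c_k$ I would impose intertwining with $E$, i.e.\ $\Phi (\tau_l(E)v_k^{(l)})=\tau_l^*(E)\Phi (v_k^{(l)})$. The left-hand side equals $-kc_{k-1}\hs v_{l-k+1}^{(l)*}$ and the right-hand side equals $(l-k+1)c_k\hs v_{l-k+1}^{(l)*}$, which yields the recursion $(l-k+1)c_k=-kc_{k-1}$. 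Solving this telescoping recurrence with the normalization $c_0=1$ gives
\[
c_k=(-1)^k\frac{k!}{l(l-1)\cdots (l-k+1)}=(-1)^k\frac{(l-k)!k!}{l!},
\]
which is precisely the claimed identification.

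The computation is entirely routine, so there is no serious obstacle beyond bookkeeping; the only points requiring care are the sign in the contragredient action, which is responsible for the factor $(-1)^k$, and the telescoping of the product $\prod_{j=1}^{k}(l-j+1)=l!/(l-k)!$. As a consistency check one can repeat the argument with $F$ in place of $E$, which produces the equivalent recurrence $(k-l)c_{k+1}=(k+1)c_k$ and the same solution, confirming that $\Phi$ is a genuine $\gs\gl (2,\mC )$-homomorphism and hence, being nonzero between isomorphic irreducibles, the desired $K$-isomorphism.
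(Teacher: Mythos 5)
Your proof is correct and takes essentially the same route as the paper: you compute the contragredient action of the $\gs \gl (2,\mC )$-triple $\{E,H,F\}$ on the dual basis (your three formulas agree exactly with those in the paper's proof) and then determine the intertwiner by weight comparison and the resulting recursion. The paper compresses this last step into ``From these equations, we obtain the assertion''; you have merely made it explicit, including the normalization $c_0=1$ and the consistency check with $F$.
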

\begin{proof}
We denote by $\langle ,\rangle$ the canonical pairing on 
$V_{l}^*\otimes_\mC V_{l}$. 

Since
\[
\langle \tau_l^*(H)v_{k}^{(l)*},v_{m}^{(l)}\rangle 
=-\langle v_{k}^{(l)*},\tau_l(H)v_{m}^{(l)}\rangle 
=(2m-l)\delta_{km}=(2k-l)\delta_{km}, 
\]
we have $\tau_l^*(H)v_{k}^{(l)*}=(2k-l)v_{k}^{(l)*}$.
Similarly, we obtain
\begin{align*}
\tau_l^*(E)v_{k}^{(l)*}&=(k+1)v_{k+1}^{(l)*},&
\tau_l^*(F)v_{k}^{(l)*}&=(l-k+1)v_{k-1}^{(l)*}.
\end{align*}
From these equations, we obtain the assertion.
\end{proof}

\section{The $(\g ,K)$-module structures of 
principal series representations}
\label{sec:structure}

\subsection{Irreducible decomposition of 
$(\pi_{(\nu_0,\sigma_0)}|_K,H_{(\nu_0,\sigma_0)})$ 
as $K$-modules}
\label{subsec:peter-weyl0}

We set 
\[
L^2_{(M_0,\sigma_0 )}(K)=\{ f\in L^2(K)\mid
f(mx)=\sigma_0 (m)f(x)\ \text{for a.e. } m\in M,\ x\in K\} 
\]
and give a $K$-module structure by the right regular action of $K$. 
Then the restriction map 
$r_K \colon H_{(\nu_0,\sigma_0)}\ni f\mapsto 
f|_K\in L^2_{(M_0,\sigma_0)}(K)$ 
is an isomorphism of $K$-modules. 

$L^2(K)$ has a  $K\times K$-bimodule structure 
by the two sided regular action:
\[
((k_1,k_2)f)(x)=f(k_1^{-1}xk_2),\quad 
x\in K,\ f\in L^2(K),\ (k_1,k_2)\in K\times K.
\]
Then we define a homomorphism $\Phi_l\colon 
V_{2l}^{*}\otimes_\mC V_{2l} \to L^2(K)$ 
of $K\times K$-bimodules by 
\[
w\otimes v\mapsto (x\mapsto 
\langle w,\tau_{2l} (x)v\rangle ).
\] 
Then the Peter-Weyl's theorem tells that 
\[
\hatoplus_{l \in \mZ_{\geq 0}} \Phi_l\colon 
\hatoplus_{l \in \mZ_{\geq 0}} V_{2l}^{*}\otimes_\mC V_{2l} \to L^2(K)
\] 
is an isomorphism as $K\times K$-bimodules. 
Here $\hatoplus $ means a Hilbert space direct sum.

Since $L^2_{(M_0,\sigma_0 )}(K)\subset L^2(K)$, 
we have an irreducible decomposition of 
$L^2_{(M_0,\sigma_0 )}(K)$: 
\[
L^2_{(M_0,\sigma_0 )}(K)\simeq \hatoplus_{l \in \mZ_{\geq 0}} 
(V_{2l}^{*}[\sigma_0 ])\otimes_\mC V_{2l} .
\]
Here $V[\sigma_0 ]$ means the $\sigma_0 $-isotypic component in 
$(\tau|_{M_0},V)$ for a $K$-module $(\tau ,V)$.
Therefore we obtain an isomorphism 
\[
r_K^{-1}\circ \hatoplus_{l \in \mZ_{\geq 0}} \Phi_l\colon 
\hatoplus_{l \in \mZ_{\geq 0}} 
(V_{2l}^{*}[\sigma_0 ])\otimes_\mC V_{2l} \to H_{(\nu_0,\sigma_0)}.
\]

Since $M_0$ is generated by the two elements
\begin{align*}
m_{0,1}=&
\left(\begin{array}{ccc}
-1 & 0 & 0 \\
 0 & 1 & 0 \\
 0 & 0 &-1 
\end{array}\right),&
m_{0,2}=&
\left(\begin{array}{ccc}
 1 & 0 & 0 \\
 0 &-1 & 0 \\
 0 & 0 &-1 
\end{array}\right) \in M_0 ,
\end{align*}
we note that 
$v\in V_{2l}[\sigma_0 ]$ if and only if 
\begin{align*}
\tau_{2l}(m_{0,i})v=&\sigma_0 (m_{0,i})v=(-1)^{\sigma_{0,i}}v&
(i=1,2)
\end{align*}
for $v\in V_{2l}$.
From the definition of $(\tau_{2l},V_{2l})$ and 
\begin{align*}
\varphi^{-1}_1(m_{0,1})=&\left\{ 
\pm \left(\begin{array}{cc}
 0 & 1 \\
-1 & 0
\end{array}\right)
\right\} ,&
\varphi^{-1}_1(m_{0,2})=&\left\{ 
\pm \left(\begin{array}{cc}
\sI& 0 \\
 0 &-\sI
\end{array}\right)
\right\} ,
\end{align*}
we have $\tau_{2l}(m_{0,1})v^{(2l)}_k=(-1)^{k}v^{(2l)}_{2l-k}$ and 
$\tau_{2l}(m_{0,2})v^{(2l)}_k=(-1)^{l-k}v^{(2l)}_{k}$. 
Hence we have 
\begin{align*}
V_{2l}[\sigma_0 ]=&\bigoplus_{k\in Z(\sigma_0 ;l)}\mC \cdot 
(v^{(2l)}_{2l-k}+(-1)^{\varepsilon (\sigma_0 ;l)}v^{(2l)}_{k}),
\end{align*}
where $\varepsilon (\sigma_0 ;l)\in \{ 0, 1\}$ such that 
${\varepsilon (\sigma_0 ;l)}\equiv l-\sigma_1-\sigma_2\bmod 2$ 
and 
\[
Z(\sigma_0 ;l)=
\left\{\begin{array}{ll}
\{ k\in \mZ \mid 0\leq k\leq l,\ k\equiv l-\sigma_{0,2} \bmod 2\}&
\text{if } \varepsilon (\sigma_0 ;l)=0,\\
\{ k\in \mZ \mid 0\leq k\leq l-1,\ k\equiv l-\sigma_{0,2} \bmod 2\}&
\text{if } \varepsilon (\sigma_0 ;l)=1.
\end{array}
\right.
\]
By the identification $V_{2l}^{*}=V_{2l}$ in 
Lemma \ref{lem:dual_Krep}, we note that 
$\{ v^{(2l)*}_{2l-k}+(-1)^{\varepsilon (\sigma_0 ;l)}v^{(2l)*}_{k}
\mid k\in Z(\sigma_0 ;l)\}$ is the basis of $V_{2l}^{*}[\sigma_0 ]$.

Now we define the elementary function 
$s(l;p,q)\in H_{(\nu_0,\sigma_0)}$ by 
\[
s(l;p,q)=r_K^{-1}\circ \Phi_l^{(j)}
((v^{(2l)*}_{2l-p}+(-1)^{\varepsilon (\sigma_0 ;l)}v^{(2l)*}_{p})\otimes 
v^{(2l)}_{q})
\]
for $l\in \mZ_{\geq 0},\ p\in Z(\sigma_0 ;l)$ and $0\leq q\leq 2l$. 

For each $p\in Z(\sigma_0 ;l)$, we put 
$S(l;p)$ a column vector of degree 
$2l+1$ whose $q+1$-th component is $s(l;p,q)$, i.e. 
${}^t(\ s(l;p,0),\ s(l;p,1),\ \cdots \ ,s(l;p,2l)\ ).$

Moreover we denote by $\langle S(l;p)\rangle $ 
the subspace of $H_{(\nu_0,\sigma_0)}$ generated by 
the functions in the entries of the vector $S(l;p)$, i.e. 
$\langle S(l;p)\rangle =
\bigoplus_{q=0}^{2l}\mC \cdot s(l;p,q)\simeq V_{2l}$. 
Via the unique isomorphism between $\langle S(l;p)\rangle$ 
and $V_{2l}$, we identify $\{s(l;p,q)\mid 0\leq q\leq 2l\}$ 
with the standard basis. 

From above arguments, we obtain the following.
\begin{prop}
As an unitary representation of $K$, it has an irreducible 
decomposition:
\[
H_{(\nu_0,\sigma_0)}
\simeq \hatoplus_{l\in \mZ_{\geq 0}} 
(V_{2l}^{*}[\sigma_0 ])\otimes_\mC V_{2l}. 
\]
Then the $\tau_{2l}$-isotypic component of $\pi_{(\nu_0,\sigma_0)}$ is 
given by 
\[
\bigoplus_{p\in Z(\sigma_0 ;l)}
\langle S(l;p) \rangle .
\]
\end{prop}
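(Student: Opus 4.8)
The plan is to assemble the isomorphisms already constructed before the statement and to keep careful track of the two commuting copies of $K$ acting on $L^2(K)$. First I would recall that the restriction map $r_K\colon H_{(\nu_0,\sigma_0)}\to L^2_{(M_0,\sigma_0)}(K)$ is an isomorphism of $K$-modules for the right regular action, and that the Peter-Weyl map $\hatoplus_l \Phi_l\colon \hatoplus_{l}V_{2l}^{*}\otimes_\mC V_{2l}\to L^2(K)$ is a unitary isomorphism of $K\times K$-bimodules. Under $\Phi_l$ the left regular action of $K$ corresponds to the action on the first factor $V_{2l}^{*}$ (via $\tau_{2l}^{*}$), while the right regular action corresponds to the action on the second factor $V_{2l}$; indeed $(1,k_2)\Phi_l(w\otimes v)=\Phi_l(w\otimes \tau_{2l}(k_2)v)$ and $(k_1,1)\Phi_l(w\otimes v)=\Phi_l(\tau_{2l}^{*}(k_1)w\otimes v)$ by direct computation. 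The defining condition $f(mx)=\sigma_0(m)f(x)$ of $L^2_{(M_0,\sigma_0)}(K)$ is exactly the requirement that $f$ transform by $\sigma_0$ under the left $M_0$-action, so it singles out $V_{2l}^{*}[\sigma_0]$ in the first factor and leaves the second factor untouched. Composing, $r_K^{-1}\circ\hatoplus_l\Phi_l$ restricts to the asserted unitary $K$-isomorphism
\[
\hatoplus_{l\in\mZ_{\geq 0}}(V_{2l}^{*}[\sigma_0])\otimes_\mC V_{2l}\to H_{(\nu_0,\sigma_0)},
\]
in which $K$ acts only on the factor $V_{2l}$ and trivially on the multiplicity space $V_{2l}^{*}[\sigma_0]$.

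For the second assertion I would note that the $\tau_{2l}$ are pairwise inequivalent, having distinct dimensions $2l+1$, so the $\tau_{2l}$-isotypic component of $H_{(\nu_0,\sigma_0)}$ is precisely the image of the single summand $(V_{2l}^{*}[\sigma_0])\otimes_\mC V_{2l}$. It then remains to decompose this summand into its $\tau_{2l}$-copies. Using the basis $\{v^{(2l)*}_{2l-p}+(-1)^{\varepsilon(\sigma_0;l)}v^{(2l)*}_p\mid p\in Z(\sigma_0;l)\}$ of $V_{2l}^{*}[\sigma_0]$ (identified with $V_{2l}$ by Lemma \ref{lem:dual_Krep}), I would write
\[
(V_{2l}^{*}[\sigma_0])\otimes_\mC V_{2l}
=\bigoplus_{p\in Z(\sigma_0;l)}
(v^{(2l)*}_{2l-p}+(-1)^{\varepsilon(\sigma_0;l)}v^{(2l)*}_p)\otimes_\mC V_{2l}.
\]
By the very definition of $s(l;p,q)$ as $r_K^{-1}\circ\Phi_l$ applied to $(v^{(2l)*}_{2l-p}+(-1)^{\varepsilon(\sigma_0;l)}v^{(2l)*}_p)\otimes v^{(2l)}_q$, the image of the $p$-th summand is exactly $\langle S(l;p)\rangle$. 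Since $r_K^{-1}\circ\hatoplus_l\Phi_l$ is injective, these images are linearly independent and their sum is direct, yielding the $\tau_{2l}$-isotypic component as $\bigoplus_{p\in Z(\sigma_0;l)}\langle S(l;p)\rangle$.

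There is essentially no deep computation here; the statement is a bookkeeping consequence of the identifications set up before it. The one point that demands care, which I would regard as the main if modest obstacle, is correctly matching up the two commuting $K$-actions: the $M_0$-covariance condition must be read off the left regular action, so that it constrains the dual factor $V_{2l}^{*}$, whereas the $K$-type decomposition of $H_{(\nu_0,\sigma_0)}$ is taken with respect to the right regular action on the factor $V_{2l}$. Once this is pinned down, unitarity of the decomposition follows from that of the Peter-Weyl isomorphism, and everything else is immediate from the definition of the elementary functions $s(l;p,q)$.
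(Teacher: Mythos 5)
Your proposal is correct and follows essentially the same route as the paper, which gives no separate proof but derives the proposition directly ``from above arguments'': the $K$-isomorphism $r_K$, the Peter--Weyl bimodule decomposition, the left $M_0$-covariance condition cutting out $V_{2l}^{*}[\sigma_0]$, and the definition of the elementary functions $s(l;p,q)$. Your explicit tracking of the two commuting $K$-actions (left action constraining the dual factor, right action giving the $K$-types) simply makes precise the bookkeeping the paper leaves implicit.
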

\begin{cor}
Let $d(\sigma_0 ;l)$ be the dimension of the space 
$\Hom_K(V_{2l} ,H_{(\nu_0,\sigma_0),K})$ of intertwining operators. 
Then
\[
d(\sigma_0 ;l)=\left\{
\begin{array}{ll}
(l+2)/2&\text{if $(\sigma_{0,1}, \sigma_{0,2})=(0,0)$ and $l$ is even,}\\
(l-1)/2&\text{if $(\sigma_{0,1}, \sigma_{0,2})=(0,0)$ and $l$ is odd,}\\
l/2&\text{if $(\sigma_{0,1}, \sigma_{0,2})\neq (0,0)$ and $l$ is even,}\\
(l+1)/2&\text{if $(\sigma_{0,1}, \sigma_{0,2})\neq (0,0)$ and $l$ is odd.}\\
\end{array}
\right.
\]
\end{cor}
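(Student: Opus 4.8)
The plan is to reduce the corollary to a counting problem and then dispatch it with a short parity analysis. The starting point is the preceding Proposition, which exhibits the $\tau_{2l}$-isotypic component of $\pi_{(\nu_0,\sigma_0)}$ as $\bigoplus_{p\in Z(\sigma_0;l)}\langle S(l;p)\rangle$ with every summand isomorphic to $V_{2l}$. Since $V_{2l}$ is irreducible, $\dim \Hom_K(V_{2l},H_{(\nu_0,\sigma_0),K})$ equals the multiplicity of $\tau_{2l}$ in $H_{(\nu_0,\sigma_0)}$, and by the displayed decomposition $H_{(\nu_0,\sigma_0)}\simeq\hatoplus_{l}(V_{2l}^{*}[\sigma_0])\otimes_\mC V_{2l}$ this multiplicity is $\dim V_{2l}^{*}[\sigma_0]$. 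As the basis of $V_{2l}^{*}[\sigma_0]$ is indexed by $Z(\sigma_0;l)$, I would first record that $d(\sigma_0;l)=|Z(\sigma_0;l)|$, so that everything reduces to counting that set.

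Next I would unwind the definition of $Z(\sigma_0;l)$. Its elements are the integers $k$ with $k\equiv l-\sigma_{0,2}\pmod 2$ lying in $0\le k\le l$ when $\varepsilon(\sigma_0;l)=0$, and in $0\le k\le l-1$ when $\varepsilon(\sigma_0;l)=1$, where $\varepsilon(\sigma_0;l)\equiv l+\sigma_{0,1}+\sigma_{0,2}\pmod 2$. Counting an arithmetic progression of step $2$ inside an interval $[0,L]$ is elementary, so the only genuine work is to pin down $\varepsilon(\sigma_0;l)$ together with the parity class of $k$ in each of the four cases of the statement.

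I would then split according to whether $(\sigma_{0,1},\sigma_{0,2})=(0,0)$ and whether $l$ is even or odd. For $(\sigma_{0,1},\sigma_{0,2})=(0,0)$ one finds $\varepsilon=0$ with $k\in\{0,2,\dots,l\}$ when $l$ is even, giving $(l+2)/2$, and $\varepsilon=1$ with $k\in\{1,3,\dots,l-2\}$ when $l$ is odd, giving $(l-1)/2$. When $(\sigma_{0,1},\sigma_{0,2})\neq(0,0)$ there are the three patterns $(1,0),(0,1),(1,1)$, and a direct check of the induced parities shows that in every one of them the count collapses to the single value $l/2$ for $l$ even and $(l+1)/2$ for $l$ odd, independently of which nonzero pattern occurs. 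This uniformity across the three nonzero characters is the one point needing attention: it holds because the shift in the parity of $k$ produced by $\sigma_{0,2}$ is exactly compensated by the accompanying shift in $\varepsilon(\sigma_0;l)$, so that the effective length of the progression is the same in all three subcases.

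Since no representation-theoretic input is needed beyond the Proposition, there is no conceptual obstacle here; the main (and only) difficulty is bookkeeping—keeping the parity of $l-\sigma_{0,2}$ and the value of $\varepsilon(\sigma_0;l)$ mutually consistent so that the endpoints of each progression are counted correctly. I would therefore arrange the verification in a small table indexed by the four cases, which makes the compensation just described transparent and yields the four stated formulae at once.
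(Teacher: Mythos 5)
Your proposal is correct and follows exactly the route the paper intends: the corollary is stated as an immediate consequence of the preceding proposition, so the content of the proof is precisely your reduction $d(\sigma_0;l)=\dim V_{2l}^{*}[\sigma_0]=|Z(\sigma_0;l)|$ followed by the case-by-case count of the arithmetic progression determined by $\varepsilon(\sigma_0;l)$ and the parity $k\equiv l-\sigma_{0,2}\bmod 2$. Your parity bookkeeping in all four cases (including the compensation that makes the three characters with $(\sigma_{0,1},\sigma_{0,2})\neq(0,0)$ give the same count) checks out against the paper's definitions of $Z(\sigma_0;l)$ and $\varepsilon(\sigma_0;l)$.
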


\subsection{General setting }
\label{subsec:setting}
Let $H_{(\nu_i,\sigma_i),K}$ be the $K$-finite part of 
$H_{(\nu_i,\sigma_i)}$. 
In order to describe the action of $\g $ or $\g_\mC =\g \otimes_\mR \mC $, 
it suffices to investigate the action of $\gp $ or $\gp_\mC $, 
because of the Cartan decomposition $\g =\gk \oplus \gp $. 

For a $K$-type $(\tau_{2l} ,V_{2l} )$ of 
$\pi_{(\nu_i,\sigma_i)}$ and 
a nonzero $K$-homomorphism $\eta \colon 
V_{2l} \to H_{(\nu_i,\sigma_i),K}$,
we define a linear map 
\[
\tilde{\eta }\colon \gp_\mC \otimes_\mC V_{2l} 
\to H_{(\nu_i,\sigma_i),K}
\]
by $X\otimes v \mapsto \pi_{(\nu_i,\sigma_i)}(X)\eta (v) $. 
Here we denote differential of $\pi_{(\nu_i,\sigma_i)}$ again by 
$\pi_{(\nu_i,\sigma_i)}$. 
Then $\tilde{\eta }$ is $K$-homomorphism with $\gp_\mC $ endowed with 
the adjoint action $\Ad$ of $K$. 

Since 
\begin{align*}
&V_{2l} \otimes_\mC  \gp_\mC \simeq 
V_{2l} \otimes_\mC  V_{4} \simeq 
\bigoplus_{-2\leq m\leq 2} V_{2(l+m)},
\end{align*}
there are five injective $K$-homomorphisms 
\begin{align*}
I^{2l}_{2m}\colon &
V_{2(l+m)} \to V_{2l} \otimes_\mC \gp_\mC ,&
-2\leq m\leq 2 
\end{align*}
for general $l\in \mZ_{\geq 0}$. 
Then we define $\mC $-linear maps 
\[
\Gamma_{l,m}^i \colon 
\Hom_K(V_{2l} ,H_{(\nu_i,\sigma_i),K})
\to \Hom_K(V_{2(l+m)},H_{(\nu_i,\sigma_i),K}),\quad 
-2\leq m\leq 2
\]
by $\eta \mapsto \tilde{\eta }\circ I^{2l}_{2m}$.

Now we settle two purposes of this paper:
\begin{description}
\item[(i)] Describe the injective $K$-homomorphism $I^{2l}_{2m}$ 
in terms of the standard basis.
\item[(ii)] Determine the matrix representations of 
the linear homomorphisms $\Gamma_{l,m}^i$ 
with respect to the induced basis defined in the next subsection. 
\end{description}
We have already accomplished the first purpose in 
Proposition \ref{prop:injector}. 
We accomplish the second purpose 
in Theorem \ref{th:main} and \ref{th:main2}. 
As a result, we obtain infinite number of 'contiguous relations', a kind 
of system of differential-difference relations among vectors in 
$H_{(\nu_i,\sigma_i)}[\tau_{2l}]$ and 
$H_{(\nu_i,\sigma_i)}[\tau_{2(l+m)}]$. 
Here $H_{(\nu_i,\sigma_i)}[\tau ] $ is $\tau$-isotypic component of 
$H_{(\nu_i,\sigma_i)}$.

\subsection{The canonical blocks of elementary functions}
\label{subsec:canonical_blocks}

Let $\eta \colon V_{2l} \to H_{(\nu_i,\sigma_i),K}$ be a non-zero 
$K$-homomorphism. 
Then we identify $\eta $ with the column vector of degree 
$2l+1$ whose $q+1$-th component is $\eta (v^{(2l)}_q)$ 
for $0\leq q\leq 2l$, i.e. 
${}^t(\ \eta (v^{(2l)}_0),\ \eta (v^{(2l)}_1),\ \cdots \ ,
\eta (v^{(2l)}_{2l})\ ).$

By this identification, 
we identify $S(l;p)$ with
the injective $K$-homomorphism 
\[
V_{2l} \ni v^{(2l)}_q\mapsto s(l;p,q)\in H_{(\nu_0,\sigma_0),K},\quad 
0\leq q\leq 2l
\]
for $p\in Z(\sigma_0 ;l)$. 
We note that $\{ S(l;p) \mid p\in Z(\sigma_0 ;l)\} $ 
is a basis of $\Hom_K (V_{2l},H_{(\nu_0,\sigma_0),K})$ and we call 
it \textit{the induced basis from the standard basis}.

We define a certain matrix of elementary functions corresponding to 
the induced basis $\{ S(l;p) \mid p\in Z(\sigma_0 ;l)\}$ 
of $\Hom_K (V_{2l},H_{(\nu_0,\sigma_0),K})$
for each $K$-type $\tau_{2l}$ of 
our principal series representation $\pi_{(\nu_0,\sigma_0)}$.
\begin{defn}
\textit{
The following $(2l+1)\times d(\sigma_0 ;l)$ matrix $\mS (\sigma_0 ;l)$ 
is called \textit{the canonical block of elementary functions} 
for $\tau_{2l}$-isotypic component:
When $(\sigma_{0,1} ,\sigma_{0,2})=(0,0)$, we consider the matrix
\begin{align*}
\mS (\sigma_0 ;l)=&\left\{
\begin{array}{ll}
(\ S(l;0),\ S(l;2),\ S(l;4),\ \cdots \ ,S(l;l)\ )&
\text{if $l$ is even,}\\
(\ S(l;1),\ S(l;3),\ S(l;5),\ \cdots \ ,S(l;l-2)\ )&
\text{if $l$ is odd.}
\end{array}
\right.
\end{align*}
When $(\sigma_{0,1} ,\sigma_{0,2})=(1,0)$, we consider the matrix
\begin{align*}
\mS (\sigma_0 ;l)=&\left\{
\begin{array}{ll}
(\ S(l;0),\ S(l;2),\ S(l;4),\ \cdots \ ,S(l;l-2)\ )&
\text{if $l$ is even,}\\
(\ S(l;1),\ S(l;3),\ S(l;5),\ \cdots \ ,S(l;l)\ )&
\text{if $l$ is odd.}
\end{array}
\right.
\end{align*}
When $\sigma_{0,2}=1$, we consider the matrix
\begin{align*}
\mS (\sigma_0 ;l)=&\left\{
\begin{array}{ll}
(\ S(l;1),\ S(l;3),\ S(l;5),\ \cdots \ ,S(l;l-1)\ )&
\text{if $l$ is even,}\\
(\ S(l;0),\ S(l;2),\ S(l;4),\ \cdots \ ,S(l;l-1)\ )&
\text{if $l$ is odd.}
\end{array}
\right.
\end{align*}
}
\end{defn}

\subsection{The $\gp_\mC $-matrix corresponding to $I^{2l}_{2m}$}
\label{subsec:p-matrix}
For two integers $c_0,\ c_1$ such that $c_0\leq c_1$ and 
a rational function $f(x)$ in the variable $x$, we denote by 
\[
\underset{c_0\leq n\leq c_1}{\Diag}(f(n))
\]
the diagonal matrix of size $c_1-c_0+1$ with an entry $f(n)$ at the 
$(n-c_0+1,n-c_0+1)$-th component. 
Let $\me_i^{(l)}\ (0\leq i\leq l)$ be the column unit vector 
of degree $l+1$ with its $i+1$-th component $1$ and 
the remaining components 0. Moreover, let $\me_i^{(l)}$ be the 
column zero vector of degree $l+1$ when $i<0$ or $l<i$. 

In this subsection, we define $\gp_\mC $-matrix 
$\gC_{l,m}$ of size $(2(l+m)+1)\times (2l+1) $ 
corresponding to $I^{2l}_{2m}$ 
with respect to the standard basis. 

Let $\sum_{i=0}^4\iota_i^{(l,m)}\otimes X_i$ be 
the image of $I^{2l}_{2m}$ by 
the composite of natural linear maps
\begin{align*}
\Hom_K (V_{2(l+m)},V_{2l}\otimes_\mC \gp_\mC )
&\to \Hom_\mC (V_{2(l+m)},V_{2l}\otimes_\mC \gp_\mC )
\simeq \Hom_\mC (V_{2(l+m)},V_{2l})\otimes_\mC \gp_\mC .
\end{align*}
Then we define $\gp_\mC $-matrix 
$\gC_{l,m}=\sum_{i=0}^4R(\iota_i^{(l,m)})\otimes X_i$
where $R(\iota_i^{(l,m)})$ is the matrix representation of $\iota_i^{(l,m)}$
with respect to the standard basis. 
Explicit expression of the matrix $R(\iota_i^{(l,m)})$ of size 
$(2(l+m)+1)\times (2l+1)$ is given by
\begin{align*}
&\left(O_{2(l+m)+1,m+2},R(\iota_0^{(l,m)}),O_{2(l+m)+1,m+2}\right)\\
&=\left( O_{2(l+m)+1,4-i},\ 
\underset{0\leq k\leq 2(l+m)}{\Diag}(A_{[2l,2m;k,i]})
,\ O_{2(l+m)+1,i}\right)
\end{align*}
for $-2\leq m\leq 2$ and $0\leq i\leq 4$.
Here we erase the symbol $O_{m,n}$ when $m=0$ or $n=0$. 

For a column vector $\mv ={}^t(v_0,v_1,\cdots ,v_{2l})\in 
(H_{(\nu_i,\sigma_i),K})^{\oplus 2l+1}$ which is identified 
with an element of $\Hom_K(V_{2l},H_{(\nu_i,\sigma_i),K})$, 
we define $\gC_{l,m}\mv \in (H_{(\nu_i,\sigma_i),K})^{\oplus 2(l+m)+1}
\simeq \mC^{2(l+m)+1}\otimes_\mC H_{(\nu_0,\sigma_0),K}$ by
\begin{align*}
\gC_{l,m}\mv =\underset{0\leq q\leq 2l}{\sum_{0\leq i\leq 4}}
(R(\iota_i^{(l,m)})\cdot \me_{q}^{(2l)})\otimes 
(\pi_{(\nu_i,\sigma_i)}(X_i)v_q).
\end{align*}
Here $R(\iota_i^{(l,m)})\cdot \me_{q}^{(2l)}$ is the ordinal product of 
matrices $R(\iota_i^{(l,m)})$ and $\me_{q}^{(2l)}$.

From the definition of $\gC_{l,m}$, 
we note that the vector $\gC_{l,m}\mv $ 
is identified with the image of $\mv $ by 
$\Gamma_{l,m}^i$. 

%The $\gp_\mC$-matrix $\gC_{l,m}$ acts on 
%$(H_{(\nu_0,\sigma_0),K})^{\oplus 2(l+m)+1}$ 
%by 
%\begin{align*}
%\gC_{l,m}F=\underset{0\leq q\leq 2l}{\sum_{0\leq i\leq 4}}
%(R(\iota_i^{(l,m)})\cdot \me_{q}^{(2l)})
%\otimes (\pi_{(\nu_0,\sigma_0)}(w_i)f_q)
%\end{align*}
%for $F={}^t(f_1,f_2,\cdots ,f_{2(l+m)+1})
%\in (H_{(\nu_0,\sigma_0),K})^{\oplus 2(l+m)+1}
%\simeq \mC^{2(l+m)+1}\otimes_\mC H_{(\sigma_0 ,\nu ),K}$.
%Here $R(\iota_i^{(l,m)})\cdot \me_{q}^{(2l)}$ is the product of matrices 
%$R(\iota_i^{(l,m)})$ and $\me_{q}^{(2l)}$.
%Then we note that the vector $\gC_{l,m}S(l;p)$ 
%is identified with the image of $S(l;p)$ by 
%$\Gamma_m^l$ from the definition of $\gC_{l,m}$, 

\subsection{The contiguous relations}
\label{subsec:matrix_rep}

\begin{lem}
\label{lem:Iwasawa}
\textit{
The standard basis $X_i\ (0\leq i\leq 4)$ in $\gp_\mC$ 
have the following expressions according to the Iwasawa decomposition 
$\g_\mC =\gn_\mC \oplus \ga_\mC \oplus \gk_\mC $:
\begin{align*}
X_0=&-2\sI E_{e_2-e_3}+H_2+\sI K_{23},\\
X_1=&-(E_{e_1-e_3}+\sI E_{e_1-e_2})
	+\frac{1}{2}(K_{13}+\sI K_{12}),\\
X_2=&-\frac{1}{3}(2H_1-H_2),\\
X_3=&(E_{e_1-e_3}-\sI E_{e_1-e_2})
-\frac{1}{2}(K_{13}-\sI K_{12}),\\
X_4=&2\sI E_{e_2-e_3}+H_2-\sI K_{23}.
\end{align*}
}
\end{lem}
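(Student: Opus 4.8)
The plan is to reduce the claim to an entrywise linear-algebra computation. The Iwasawa decomposition $\g_\mC = \gn_\mC \oplus \ga_\mC \oplus \gk_\mC$ has already been established, and the explicit bases recorded earlier identify the three summands with transparent classes of matrices inside $M_3(\mC)$: the span of $E_{e_1-e_2}, E_{e_1-e_3}, E_{e_2-e_3}$ is precisely the space of strictly upper-triangular matrices ($\gn_\mC$); the span of $H_1, H_2$ is the space of trace-free diagonal matrices ($\ga_\mC$); and $\gk_\mC$, the complexification of $\gs\go(3)$ spanned by $K_{12}, K_{13}, K_{23}$, is the space of skew-symmetric matrices. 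As these three classes meet only in the zero matrix and their dimensions sum to $\dim_\mC \g_\mC = 8$, the projection of a given $X \in \g_\mC$ onto each summand can be read off directly from its entries.

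Concretely, I would record the projection recipe once and for all. For $X = (X_{ab}) \in \g_\mC$, the $\ga_\mC$-component is $\diag(X_{11}, X_{22}, X_{33})$, which lies in $\ga_\mC$ since $\tr X = 0$. The off-diagonal part then splits uniquely into a skew-symmetric and a strictly upper-triangular piece: the skew piece is forced by the lower entries, $K_{ba} = X_{ba}$ and $K_{ab} = -X_{ba}$ for $a < b$, after which the upper piece carries the entries $X_{ab} + X_{ba}$ for $a < b$. Because each $X_i$ lies in $\gp_\mC$ and is hence symmetric, the upper-triangular entry at $(a,b)$ equals $2X_{ab}$, whereas the skew part only sees the single entry $X_{ab}$; this accounts for the relative weighting between the $\gn_\mC$- and $\gk_\mC$-coefficients in every one of the five asserted identities.

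Applying this recipe to $X_0, \dots, X_4$ in turn, and re-expressing the resulting diagonal, skew-symmetric, and upper-triangular matrices in terms of $\{H_1, H_2\}$, $\{K_{12}, K_{13}, K_{23}\}$, and $\{E_{e_1-e_2}, E_{e_1-e_3}, E_{e_2-e_3}\}$, yields the five formulas. For example, $X_0$ has diagonal part $\diag(0,1,-1) = H_2$; its entry $X_{32} = -\sI$ forces the skew component $\sI K_{23}$; and the upper-triangular entry $X_{23} + X_{32} = -2\sI$ at position $(2,3)$ gives $-2\sI E_{e_2-e_3}$, reproducing $X_0 = -2\sI E_{e_2-e_3} + H_2 + \sI K_{23}$. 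The other four cases, including the normalizing factors $\tfrac{1}{2}$ carried by $X_1$ and $X_3$, follow in exactly the same way. There is no conceptual obstacle; the only thing demanding care is the consistent tracking of the signs and of the factors of $\sI$, and the whole verification is a short finite computation.
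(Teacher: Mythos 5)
Your proposal is correct and matches the paper's approach: the paper's proof simply says the assertion follows immediately from the explicit matrices of $X_0,\dots ,X_4$ in Lemma \ref{lem:ad_K_p}, i.e.\ by direct computation, which is exactly the entrywise decomposition into strictly upper-triangular, trace-free diagonal, and skew-symmetric parts that you carry out. Your write-up merely makes explicit the projection recipe (skew part forced by the below-diagonal entries, hence the factor $2$ on the $\gn_\mC$-coefficients for symmetric $X$) that the paper leaves implicit.
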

\begin{proof}
We obtain the assertion immediately from Lemma \ref{lem:ad_K_p}. 
\end{proof}

We give the matrix representation of $\Gamma_{l,m}^0$ 
with respect to the induced basis as follows.

\begin{thm}
\label{th:main}
For $l\in \mZ_{\geq 0},\ -2\leq m\leq 2$ 
such that $d(\sigma_0 ;l)>0$ and $d(\sigma_0 ;l+m)>0$, 
we have 
\begin{equation}
\gC_{l,m}\mS (\sigma_0 ;l)
=\mS (\sigma_0 ;l+m)\cdot R(\Gamma_{l,m}^0)
\label{eqn:statement_thm}
\end{equation}
with the matrix representation 
$R(\Gamma_{l,m}^0)\in M_{d(\sigma_0 ;l+m),d(\sigma_0 ;l)}(\mC )$ 
of $\Gamma_{l,m}^0$ with respect to the induced basis 
$\{ S(l;p) \mid p\in Z(\sigma_0 ;l)\} $: 

Explicit expressions of the matrix $R(\Gamma_{l,m}^0)$ of size 
$d(\sigma_0 ; l+m)\times d(\sigma_0 ;l)$ is given as follows:

When $\sigma_{0,2}=0 $ and $(m,\sigma_{0,1}+l)\in \{0,\pm 2\}\times (2\mZ)$,
the matrix $R(\Gamma_{l,m}^0)$ is given by
\begin{align*}
\left(\begin{array}{c}
\mmhs O_{n(\sigma_0 ;l,m),d(\sigma_0 ;l)}\mmhs \\[1mm]
R(\Gamma_{l,m}^0)
\end{array}\right) 
=&\left(\begin{array}{c}
\mmhs \underset{0\leq k\leq d(\sigma_0 ;l)-1}{\Diag}
\mhs \left(\gamma_{[l,m;2k+\delta (\sigma_0 ;l),-1]}^{(0)}\right)\mhs \\[1mm]
O_{1,d(\sigma_0 ;l)}
\end{array}\right)
+\left(\begin{array}{c}
O_{1,d(\sigma_0 ;l)}\\[1mm]
\mmhs \underset{0\leq k\leq d(\sigma_0 ;l)-1}{\Diag}
\mhs \left(\gamma_{[l,m;2k+\delta (\sigma_0 ;l),0]}^{(0)}\right)\mhs 
\end{array}\right)\\
&+\left(\begin{array}{cc}
O_{2,d(\sigma_0 ;l)-1}&O_{2,1}\\[1mm]
\mmhs \underset{0\leq k\leq d(\sigma_0 ;l)-2}{\Diag}
\mhs \left(\gamma_{[l,m;2k+\delta (\sigma_0 ;l),1]}^{(0)}\right)
&\gamma_{[l,m;l,1]}^{(0)}\cdot 
e^{(d(\sigma_0 ;l)-2)}_{d(\sigma_0 ;l)-3}\mmhs 
\end{array}\right).
\end{align*}

When $\sigma_{0,2}=0 $ and $(m,\sigma_{0,1}+l)\in \{0,\pm 2\}\times (1+2\mZ)$,
the matrix $R(\Gamma_{l,m}^0)$ is given by
\begin{align*}
\left(\begin{array}{c}
\mmhs O_{n(\sigma_0 ;l,m),d(\sigma_0 ;l)}\mhs \\[1mm]
R(\Gamma_{l,m}^0)
\end{array}\right)
=&\left(\begin{array}{c}
\mhs \underset{0\leq k\leq d(\sigma_0 ;l)-1}{\Diag}
\mmhs \left(\gamma_{[l,m;2k+\delta (\sigma_0 ;l),-1]}^{(0)}\right) \mhs \\[1mm]
O_{1,d(\sigma_0 ;l)}
\end{array}\right)
+\left(\begin{array}{c}
O_{1,d(\sigma_0 ;l)}\\[1mm]
\mmhs \underset{0\leq k\leq d(\sigma_0 ;l)-1}{\Diag}
\mhs \left(\gamma_{[l,m;2k+\delta (\sigma_0 ;l),0]}^{(0)}\right)\mmhs 
\end{array}\right)\\
&+\left(\begin{array}{cc}
O_{2,d(\sigma_0 ;l)-1}&O_{2,1}\\[1mm]
\mmhs \underset{0\leq k\leq d(\sigma_0 ;l)-2}{\Diag}
\mhs \left(\gamma_{[l,m;2k+\delta (\sigma_0 ;l),1]}^{(0)}\right)
&O_{d(\sigma_0 ;l)-1,1}\mmhs 
\end{array}\right).
\end{align*}

When $\sigma_{0,2}=0 $, $(m,\sigma_{0,1}+l)\in \{\pm 1\}\times (2\mZ)$ 
and $d(\sigma_0 ;l)=1$, 
the matrix $R(\Gamma_{l,m}^0)$ is given by
\begin{align*}
R(\Gamma_{l,m}^0)
=&\left(\gamma_{[l,m;\delta (\sigma_0 ;l),-1]}^{(0)}\right).
\end{align*}

When $\sigma_{0,2}=0 $, $(m,\sigma_{0,1}+l)\in \{\pm 1\}\times (2\mZ)$ 
and $d(\sigma_0 ;l)>1$, 
the matrix $R(\Gamma_{l,m}^0)$ is given by
\begin{align*}
\left(\begin{array}{c}
\mmhs O_{n(\sigma_0 ;l,m),d(\sigma_0 ;l)}\mmhs \\[1mm]
R(\Gamma_{l,m}^0)
\end{array}\right)
=&\underset{0\leq k\leq d(\sigma_0 ;l)-1}{\Diag}
\mhs \left(\gamma_{[l,m;2k+\delta (\sigma_0 ;l),-1]}^{(0)}\right)\\
&+\left(\begin{array}{cc}
O_{1,d(\sigma_0 ;l)-1}&0\\[1mm]
\mmhs \underset{0\leq k\leq d(\sigma_0 ;l)-2}{\Diag}
\mhs \left(\gamma_{[l,m;2k+\delta (\sigma_0 ;l),0]}^{(0)}\right)&
O_{d(\sigma_0 ;l)-1,1}\mmhs 
\end{array}\right)\\
&+\left(\begin{array}{ccc}
O_{2,d(\sigma_0 ;l)-2}&O_{2,1}&O_{2,1}\\[1mm]
\mmhs \underset{0\leq k\leq d(\sigma_0 ;l)-3}{\Diag}
\mhs \left(\gamma_{[l,m;2k+\delta (\sigma_0 ;l),1]}^{(0)}\right)\mmhs &
O_{d(\sigma_0 ;l)-2,1}\mmhs 
&-\gamma_{[l,m;l,1]}^{(0)}\cdot 
e^{(d(\sigma_0 ;l)-3)}_{d(\sigma_0 ;l)-3}\mhs 
\end{array}\right).
\end{align*}

When $\sigma_{0,2}=0 $ and $(m,\sigma_{0,1}+l)\in \{\pm 1\}\times (1+2\mZ)$,
the matrix $R(\Gamma_{l,m}^0)$ is given by
\begin{align*}
\left(\begin{array}{c}
\mmhs O_{n(\sigma_0 ;l,m),d(\sigma_0 ;l)}\mmhs \\[1mm]
R(\Gamma_{l,m}^0)
\end{array}\right)
=&\left(\begin{array}{c}
\mmhs \underset{0\leq k\leq d(\sigma_0 ;l)-1}{\Diag}
\mhs \left(\gamma_{[l,m;2k+\delta (\sigma_0 ;l),-1]}^{(0)}\right)\mhs \\[1mm]
O_{2,d(\sigma_0 ;l)}
\end{array}\right)
+\left(\begin{array}{c}
O_{1,d(\sigma_0 ;l)}\\[1mm]
\mmhs \underset{0\leq k\leq d(\sigma_0 ;l)-1}{\Diag}
\mhs \left(\gamma_{[l,m;2k+\delta (\sigma_0 ;l),0]}^{(0)}\right)\mhs \\[1mm]
O_{1,d(\sigma_0 ;l)}
\end{array}\right)\\
&+\left(\begin{array}{cc}
O_{2,d(\sigma_0 ;l)}\\[1mm]
\mmhs \underset{0\leq k\leq d(\sigma_0 ;l)-1}{\Diag}
\mhs \left(\gamma_{[l,m;2k+\delta (\sigma_0 ;l),1]}^{(0)}\right)\mmhs 
\end{array}\right).
\end{align*}

When $\sigma_{0,2}=1 $, the matrix $R(\Gamma_{l,m}^0)$ is given by
\begin{align*}
\left(\begin{array}{c}
\mhs O_{n(\sigma_0 ;l,m),d(\sigma_0 ;l)}\mhs \\[1mm]
R(\Gamma_{l,m}^0)
\end{array}\right)
=&\left(\begin{array}{c}
\mmhs \underset{0\leq k\leq d(\sigma_0 ;l)-1}{\Diag}
\mhs \left(\gamma_{[l,m;2k+\delta (\sigma_0 ;l),-1]}^{(0)}\right)\mhs \\[1mm]
O_{1,d(\sigma_0 ;l)}
\end{array}\right)
+\left(\begin{array}{c}
O_{1,d(\sigma_0 ;l)}\\[1mm]
\mmhs \underset{0\leq k\leq d(\sigma_0 ;l)-1}{\Diag}
\mhs \left(\gamma_{[l,m;2k+\delta (\sigma_0 ;l),0]}^{(0)}\right)\mhs 
\end{array}\right)\\
&+\mmhs \left(\begin{array}{cc}
O_{2,d(\sigma_0 ;l)-1}&O_{2,1}\\[1mm]
\mmhs \underset{0\leq k\leq d(\sigma_0 ;l)-2}{\Diag}
\mhs \left(\gamma_{[l,m;2k+\delta (\sigma_0 ;l),1]}^{(0)}\right)
&(-1)^{\varepsilon (\sigma_0 ;l+m)}\gamma_{[l,m;l-1,1]}^{(0)}\cdot 
e^{(d(\sigma_0 ;l)-2)}_{d(\sigma_0 ;l)-2}\mhs 
\end{array}\right).
\end{align*}
Here 
\begin{align*}
\gamma_{[l,m;p,1]}^{(0)}
=&(\nu_{0,2}+\rho_{0,2}-l+p)A_{[2l,2m;2l-p+m-2,0]},\\
\gamma_{[l,m;p,0]}^{(0)}
=&-\frac{1}{3}
\Big(2\nu_{0,1}-\nu_{0,2}+2\rho_{0,1}-\rho_{0,2}+lm-3+\frac{m(m+1)}{2}\Big)
A_{[2l,2m;2l-p+m,2]},\\
\gamma_{[l,m;p,-1]}^{(0)}
=&(\nu_{0,2}+\rho_{0,2}+l-p)A_{[2l,2m;2l-p+m+2,4]},\\
n(\sigma_0 ;l,m)
=&\left\{\begin{array}{ll}
(2-m)/2&\text{ if } m\in \{0,\ \pm 2\},\\
(3-m)/2&\text{ if } (m,l+\sigma_{0,2})\in \{\pm 1\}\times (2\mZ ),\\
(1-m)/2&\text{ if } (m,l+\sigma_{0,2})\in \{\pm 1\}\times (1+2\mZ ),
\end{array}\right.
\end{align*}
and $\delta (\sigma_0 ;l)\in \{0,1\}$ such that 
$\delta (\sigma_0 ;l)\equiv l-\sigma_{0,2}\bmod 2$.

In the above equations, we put $A_{[2l,2m;k,i]}=0$ for $k<0$ or $k>2(l+m)$, 
and erase the symbols $\underset{c\leq n\leq c-1}{\Diag}(f(n)),\ 
O_{0,n},\ O_{m,0}$ and $\me_{i}^{(-1)}$. 
\end{thm}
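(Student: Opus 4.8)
The plan is to prove the matrix identity (\ref{eqn:statement_thm}) by reducing it to an evaluation at the identity element $1_3\in K$. By construction each column of $\gC_{l,m}\mS(\sigma_0;l)$ is the image under $\Gamma_{l,m}^0$ of an induced basis vector, hence a $K$-homomorphism $V_{2(l+m)}\to H_{(\nu_0,\sigma_0),K}$ whose image lies in the $\tau_{2(l+m)}$-isotypic component. Since that component has the induced basis $\{S(l+m;p')\mid p'\in Z(\sigma_0;l+m)\}$, the left-hand side is automatically of the shape $\mS(\sigma_0;l+m)\cdot R$ for a unique $R$, and the whole content of the theorem is the determination of $R=R(\Gamma_{l,m}^0)$. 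I would pin $R$ down by evaluating entries at $1_3$: from the definition of $s(l;p,q)$ via $\Phi_l$ one has $s(l;p,q)(1_3)=\delta_{2l-p,q}+(-1)^{\varepsilon(\sigma_0;l)}\delta_{p,q}$, so the $1_3$-value of the column $S(l+m;p')$ is supported on $\{p',2(l+m)-p'\}$. Because $p'\le l+m$ and $p'=l+m$ can occur only when $\varepsilon(\sigma_0;l+m)=0$, these supports are pairwise disjoint as $p'$ ranges over the induced basis; hence the $1_3$-evaluation matrix of $\mS(\sigma_0;l+m)$ has full column rank and $R$ is uniquely recovered from the values of $\gC_{l,m}\mS(\sigma_0;l)$ at $1_3$.

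The second step is to compute $\pi_{(\nu_0,\sigma_0)}(X_i)s(l;p,q)(1_3)$ for $0\le i\le 4$ from the Iwasawa expansions in Lemma \ref{lem:Iwasawa}. Evaluation at $1_3$ splits each $X_i$ into three pieces. The nilpotent part contributes $0$, since left quasi-invariance of $H_{(\nu_0,\sigma_0)}$ makes $t\mapsto s(l;p,q)(\exp(tE_\alpha))$ constant. The abelian part contributes the eigenvalue of $e^{\nu_0+\rho_0}$ times $s(l;p,q)(1_3)$; thus $H_2$ yields $\nu_{0,2}+\rho_{0,2}$ and $-\tfrac13(2H_1-H_2)$ yields $-\tfrac13(2\nu_{0,1}-\nu_{0,2}+2\rho_{0,1}-\rho_{0,2})$. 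The compact part acts on the right index $q$ through $\tau_{2l}$: identifying the compact components of $X_0,X_4$ with $\pm\tfrac12 d\varphi(H)$ and of $X_1,X_3$ with $\tfrac12 d\varphi(E),\tfrac12 d\varphi(F)$, the formulas of Subsection \ref{subsec:rep_su2} give $v^{(2l)}_q\mapsto \pm(l-q)v^{(2l)}_q$ for the $K_{23}$-part and a shift $q\mapsto q\mp1$ with coefficients $-q/2$ and $(q-2l)/2$ for the $K_{13}\pm\sI K_{12}$-parts. Combining, one gets $\pi_{(\nu_0,\sigma_0)}(X_0)s(l;p,q)(1_3)=(\nu_{0,2}+\rho_{0,2}+l-q)\,s(l;p,q)(1_3)$, $\pi_{(\nu_0,\sigma_0)}(X_4)s(l;p,q)(1_3)=(\nu_{0,2}+\rho_{0,2}-l+q)\,s(l;p,q)(1_3)$, a pure scalar for $X_2$, and one-step shifts in $q$ for $X_1,X_3$.

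Next I would assemble $\gC_{l,m}S(l;p)$ at $1_3$. By the description of $R(\iota_i^{(l,m)})$ in Subsection \ref{subsec:p-matrix}, the $X_i$-slot carries the Clebsch--Gordan coefficient $A_{[2l,2m;\cdot,i]}$ with the prescribed index shift, so the $q'$-component at $1_3$ is a sum over $i$ and over $q\in\{p,2l-p\}$ of (shifted coefficient)$\times$(action scalar). The $X_0$- and $X_4$-slots evaluated at $q=2l-p$ produce exactly $(\nu_{0,2}+\rho_{0,2}-l+p)A_{[2l,2m;2l-p+m-2,0]}$ and $(\nu_{0,2}+\rho_{0,2}+l-p)A_{[2l,2m;2l-p+m+2,4]}$, i.e.\ $\gamma_{[l,m;p,1]}^{(0)}$ and $\gamma_{[l,m;p,-1]}^{(0)}$, landing on the outer two bands; meanwhile the compact shifts of $X_1,X_3$ cancel their injector offsets, so the $X_1$-, $X_2$- and $X_3$-slots all act on $s(l;p,\cdot)(1_3)$ at the same index and align on the middle band. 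There the third relation of Lemma \ref{lem:rel_clebsch},
\[
3\{(k-m+1)A_{[2l,2m;k,1]}+(2l-k+m+1)A_{[2l,2m;k,3]}\}=(2lm+m^2+m-6)A_{[2l,2m;k,2]},
\]
is exactly what fuses the $X_1,X_3$ contributions (carrying the half-integer compact scalars) into the $A_{[2l,2m;\cdot,2]}$ term, producing the summand $lm-3+\tfrac{m(m+1)}2=\tfrac12(2lm+m^2+m-6)$ inside $\gamma_{[l,m;p,0]}^{(0)}$; the first two relations of Lemma \ref{lem:rel_clebsch} reconcile the two contributions (from $q=p$ and $q=2l-p$) to each matrix entry with the target symmetrization sign $(-1)^{\varepsilon(\sigma_0;l+m)}$.

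I expect the main obstacle to be the case analysis of the final matching, not any single evaluation. The column pattern of $\mS(\sigma_0;l+m)$, the vertical offset $n(\sigma_0;l,m)$, and the parity $\delta(\sigma_0;l)$ all depend on the parities of $l$ and $l+m$ and on $(\sigma_{0,1},\sigma_{0,2})$, so the tri-banded form has to be read off separately in each listed regime. The genuinely delicate point is the extreme index $p=l+m$, where $2(l+m)-p=p$ makes the relevant column of the $1_3$-evaluation degenerate (doubling when $\varepsilon=0$, vanishing when $\varepsilon=1$); this is the origin of the boundary corrections $\pm\gamma_{[l,m;l,1]}^{(0)}\,e^{(\cdots)}_{\cdots}$ in the last column and of the sign $(-1)^{\varepsilon(\sigma_0;l+m)}$ appearing in the $\sigma_{0,2}=1$ case. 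Once these boundary columns are handled, the remaining verification is direct, if lengthy, substitution of the explicit $A_{[2l,2m;k,i]}$ from Proposition \ref{prop:injector}.
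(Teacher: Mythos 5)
Your proposal reproduces the paper's own proof essentially step for step: the paper likewise reduces (\ref{eqn:statement_thm}) to evaluation at $1_3$ (the values $S(l+m;p')(1_3)=\me_{2(l+m)-p'}^{(2(l+m))}+(-1)^{\varepsilon (\sigma_0 ;l+m)}\me_{p'}^{(2(l+m))}$ being linearly independent), computes $\{\pi_{(\nu_0,\sigma_0)}(X_i)s(l;p,q)\}(1_3)$ from the Iwasawa expressions of Lemma \ref{lem:Iwasawa} with vanishing nilpotent part, scalar $\ga_0$-part and compact part acting through the standard-basis formulas, and then uses exactly the relations of Lemma \ref{lem:rel_clebsch} as you indicate --- the third to fuse the $X_1,X_3$ contributions into the $A_{[2l,2m;\cdot ,2]}$ band producing $lm-3+\tfrac{m(m+1)}{2}$, the first two to get $\alpha_{[l,m;p,i]}=(-1)^m\beta_{[l,m;p,i]}$ and hence, via $\varepsilon (\sigma_0 ;l)+m\equiv \varepsilon (\sigma_0 ;l+m)\bmod 2$, the symmetrized combination $\me_{2(l+m)-(p+m+2i)}^{(2(l+m))}+(-1)^{\varepsilon (\sigma_0 ;l+m)}\me_{p+m+2i}^{(2(l+m))}$. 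Your only loose point is attributing the last-column corrections to the degenerate column $p'=l+m$; more precisely they arise when a raised target index $p+m+2$ exceeds $l+m$ and is folded back to the column $2(l+m)-(p+m+2)$ with the sign $(-1)^{\varepsilon (\sigma_0 ;l+m)}$ through that same symmetrization, but this is the identical mechanism and does not change the substance or correctness of your argument.
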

\begin{proof}
Since 
\begin{equation*}
s(l;p,q)(1_3)=\langle 
(v^{(2l)*}_{2l-p}+(-1)^{\varepsilon (\sigma_0 ;l)}v^{(2l)*}_{p}),
v^{(2l)}_{q}
\rangle 
=\delta_{2l-p\hs q}+(-1)^{\varepsilon (\sigma_0 ;l)}\delta_{pq},
\end{equation*}
we have 
\begin{equation}
\label{eqn:fmne}
S(l;p)(1_3)=\me_{2l-p}^{(2l)}+(-1)^{\varepsilon (\sigma_0 ;l)}\me_{p}^{(2l)}.
\end{equation}
Hence $S(l;p)(1_3)\ (p\in Z(\sigma_0 ;l))$ are linearly independent over $\mC$.
Thus we note that it suffices to evaluate 
the both side of the equation (\ref{eqn:statement_thm}) at $1_3\in G$. 

First, we compute $\{\pi_{(\nu_0,\sigma_0)}(X_i)s(l;p,q)\}(1_3)$ 
for $0\leq i\leq 4,\ p\in Z(\sigma_0 ;l)$ and $0\leq q\leq 2l$. 
Since $\{s(l;p,q)\mid 0\leq q\leq 2l\}$ is the standard basis of 
$\langle S(l;p)\rangle $, we obtain
\begin{align*}
\{\pi_{(\nu_0,\sigma_0)}(\sI K_{23})s(l;p,q)\}(1_3)
&=(l-q)s(l;p,q)(1_3)\\
&=(l-q)(\delta_{2l-p\hs q}+(-1)^{\varepsilon (\sigma_0 ;l)}\delta_{pq}),\\
\{\pi_{(\nu_0,\sigma_0)}(K_{13}+\sI K_{12})s(l;p,q)\}(1_3)
&=-q\cdot s(l;p,q-1)(1_3)\\
&=-q(\delta_{2l-p+1\hs q}+(-1)^{\varepsilon (\sigma_0 ;l)}\delta_{p+1\hs q}),\\
\{\pi_{(\nu_0,\sigma_0)}(K_{13}-\sI K_{12})s(l;p,q)\}(1_3)
&=(2l-q)s(l;p,q+1)(1_3)\\
&=(2l-q)(\delta_{2l-p-1\hs q}
+(-1)^{\varepsilon (\sigma_0 ;l)}\delta_{p-1\hs q}).
\end{align*}
Moreover, we obtain 
\begin{align*}
 \{\pi_{(\nu_0,\sigma_0)}(E_\alpha )s(l;p,q)\}(1_3)&=0, 
 && \alpha \in \Sigma^+, \\
 \{\pi_{(\nu_0,\sigma_0)}(H_i)s(l;p,q)\}(1_3)&=
(\nu_{0,i}+\rho_{0,i})s(l;p,q)(1_3)\\
 &=(\nu_{0,i}+\rho_{0,i})
 (\delta_{2l-p\hs q}+(-1)^{\varepsilon (\sigma_0 ;l)}\delta_{pq}), && i=1,2,
\end{align*}
from the definition of principal series representation. 
From these computations and Iwasawa decomposition in Lemma \ref{lem:Iwasawa}, 
we obtain
\begin{align*}
\{\pi_{(\nu_0,\sigma_0)}(X_0)s(l;p,q)\}(1_3)
&=(\nu_{0,2}+\rho_{0,2}+l-q)
(\delta_{2l-p\hs q}+(-1)^{\varepsilon (\sigma_0 ;l)}\delta_{pq}),\\
\{\pi_{(\nu_0,\sigma_0)}(X_1)s(l;p,q)\}(1_3)
&=-\frac{q}{2}(\delta_{2l-p+1\hs q}
+(-1)^{\varepsilon (\sigma_0 ;l)}\delta_{p+1\hs q}),\\
\{\pi_{(\nu_0,\sigma_0)}(X_2)s(l;p,q)\}(1_3)
&=-\frac{1}{3}(2\nu_{0,1}-\nu_{0,2}+2\rho_{0,1}-\rho_{0,2})
(\delta_{2l-p\hs q}+(-1)^{\varepsilon (\sigma_0 ;l)}\delta_{pq}),\\
\{\pi_{(\nu_0,\sigma_0)}(X_3)s(l;p,q)\}(1_3)
&=-\frac{2l-q}{2}(\delta_{2l-p-1\hs q}
-(-1)^{\varepsilon (\sigma_0 ;l)}\delta_{p-1\hs q}),\\
\{\pi_{(\nu_0,\sigma_0)}(X_4)s(l;p,q)\}(1_3)
&=(\nu_{0,2}+\rho_{0,2}-l+q)
(\delta_{2l-p\hs q}+(-1)^{\varepsilon (\sigma_0 ;l)}\delta_{pq}).
\end{align*}
We set 
\[
\pi_{(\nu_0,\sigma_0)}(X_i)S(l;p)
=\sum_{0\leq q\leq 2l}\me_{q}^{(2l)}
\otimes (\pi_{(\nu_0,\sigma_0)}(X_i)s(l;p,q))
%\in 
%\mC^{2(l+m)+1}\otimes_\mC H_{(\nu_0,\sigma_0),K}\simeq 
%(H_{(\nu_0,\sigma_0),K})^{\oplus 2(l+m)+1}
.
\]
Then we obtain 
\begin{align*}
\{\pi_{(\nu_0,\sigma_0)}(X_0)S(l;p)\}(1_3)
&=(\nu_{0,2}+\rho_{0,2}-l+p)\me_{2l-p}^{(2l)}
+(-1)^{\varepsilon (\sigma_0 ;l)}(\nu_{0,2}+\rho_{0,2}+l-p)\me_{p}^{(2l)},\\
\{\pi_{(\nu_0,\sigma_0)}(X_1)S(l;p)\}(1_3)
&=-\frac{2l-p+1}{2}\me_{2l-p+1}^{(2l)}
-(-1)^{\varepsilon (\sigma_0 ;l)}\frac{p+1}{2}\me_{p+1}^{(2l)},\\
\{\pi_{(\nu_0,\sigma_0)}(X_2)S(l;p)\}(1_3)
&=-\frac{1}{3}(2\nu_{0,1}-\nu_{0,2}+2\rho_{0,1}-\rho_{0,2})
(\me_{2l-p}^{(2l)}+(-1)^{\varepsilon (\sigma_0 ;l)}\me_{p}^{(2l)}),\\
\{\pi_{(\nu_0,\sigma_0)}(X_3)S(l;p)\}(1_3)
&=-\frac{p+1}{2}\me_{2l-p-1}^{(2l)}
-(-1)^{\varepsilon (\sigma_0 ;l)}\frac{2l-p+1}{2}\me_{p-1}^{(2l)},\\
\{\pi_{(\nu_0,\sigma_0)}(X_4)S(l;p)\}(1_3)
&=(\nu_{0,2}+\rho_{0,2}+l-p)\me_{2l-p}^{(2l)}
+(-1)^{\varepsilon (\sigma_0 ;l)}(\nu_{0,2}+\rho_{0,2}-l+p)\me_{p}^{(2l)}.
\end{align*}

Let us compute $\{\gC_{l,m}S(l;p)\}(1_3)$.
By the above equations, we have
\begin{align*}
\{\gC_{l,m}S(l;p)\}(1_3)
=&\underset{0\leq q\leq 2l}{\sum_{0\leq i\leq 4}}
(R(\iota_i^{(l,m)})\cdot \me_{q}^{(2l)})\otimes 
\{(\pi_{(\nu_0,\sigma_0)}(X_i)s(l;p,q))\}(1_3)\\
=&\sum_{0\leq i\leq 4}R(\iota_i^{(l,m)})\cdot 
\{(\pi_{(\nu_0,\sigma_0)}(X_i)S(l;p))\}(1_3)\\
=&
R(\iota_0^{(l,m)})\cdot \{(\nu_{0,2}+\rho_{0,2}-l+p)\me_{2l-p}^{(2l)}
+(-1)^{\varepsilon (\sigma_0 ;l)}(\nu_{0,2}+\rho_{0,2}+l-p)\me_{p}^{(2l)}\}\\
&+R(\iota_1^{(l,m)})\cdot 
\Big\{-\frac{2l-p+1}{2}\me_{2l-p+1}^{(2l)}
-(-1)^{\varepsilon (\sigma_0 ;l)}\frac{p+1}{2}\me_{p+1}^{(2l)}\Big\}\\
&+R(\iota_2^{(l,m)})\cdot 
\Big\{-\frac{1}{3}(2\nu_{0,1}-\nu_{0,2}+2\rho_{0,1}-\rho_{0,2})
(\me_{2l-p}^{(2l)}+(-1)^{\varepsilon (\sigma_0 ;l)}\me_{p}^{(2l)})\Big\}\\
&+R(\iota_3^{(l,m)})\cdot \Big\{
-\frac{p+1}{2}\me_{2l-p-1}^{(2l)}
-(-1)^{\varepsilon (\sigma_0 ;l)}\frac{2l-p+1}{2}\me_{p-1}^{(2l)}\Big\}\\
&+R(\iota_4^{(l,m)})\hspace{-0.4mm} \cdot \hspace{-0.4mm} \{
(\nu_{0,2}+\rho_{0,2}+l-p)\me_{2l-p}^{(2l)}\mmhs 
+(-1)^{\varepsilon (\sigma_0 ;l)}(\nu_{0,2}+\rho_{0,2}-l+p)\me_{p}^{(2l)}\}.
\end{align*}
Since 
\[
R(\iota_i^{(l,m)})\me_q^{(2l)}=A_{[2l,2m;i+q+m-2,i]}\me_{i+q+m-2}^{(2(l+m))},
\quad -2\leq m\leq 2,
\]
we obtain
\begin{align}
\label{eqn:pf_th_001}
\{\gC_{l,m}S(l;p)\}(1_3)
=&\sum_{-1\leq i\leq 1}
\{
\alpha_{[l,m;p,i]}\me_{2(l+m)-(p+m+2i)}^{(2(l+m))}
+(-1)^{\varepsilon (\sigma_0 ;l)}
\beta_{[l,m;p,i]}\me_{p+m+2i}^{(2(l+m))}
\},
\end{align}
where
\begin{align*}
\alpha_{[l,m;p,1]}
=&(\nu_{0,2}+\rho_{0,2}-l+p)A_{[2l,2m;2l-p+m-2,0]},\\
\alpha_{[l,m;p,0]}
=&-\frac{1}{3}(2\nu_{0,1}-\nu_{0,2}+2\rho_{0,1}-\rho_{0,2})
A_{[2l,2m;2l-p+m,2]}\\
&-\frac{2l-p+1}{2}A_{[2l,2m;2l-p+m,1]}-\frac{p+1}{2}A_{[2l,2m;2l-p+m,3]},\\
\alpha_{[l,m;p,-1]}
=&(\nu_{0,2}+\rho_{0,2}+l-p)A_{[2l,2m;2l-p+m+2,4]},\\
\beta_{[l,m;p,1]}
=&(\nu_{0,2}+\rho_{0,2}-l+p)A_{[2l,2m;p+m+2,4]},\\
\beta_{[l,m;p,0]}
=&-\frac{1}{3}(2\nu_{0,1}-\nu_{0,2}+2\rho_{0,1}-\rho_{0,2})A_{[2l,2m;p+m,2]}\\
&-\frac{p+1}{2}A_{[2l,2m;p+m,1]}-\frac{2l-p+1}{2}A_{[2l,2m;p+m,3]},\\
\beta_{[l,m;p,-1]}
=&(\nu_{0,2}+\rho_{0,2}+l-p)A_{[2l,2m;p+m-2,0]}.
\end{align*}
By the relations of the coefficients $A_{[2l,2m;k,i]}$ in Lemma 
\ref{lem:rel_clebsch}, we see that 
\[
\alpha_{[l,m;p,i]}=(-1)^m\beta_{[l,m;p,i]}=\gamma_{[l,m;p,i]}^{(0)},\quad 
-1\leq i\leq 1.
\]
Therefore, (\ref{eqn:pf_th_001}) become
\begin{align}
\{\gC_{l,m}S(l;p)\}(1_3)
=&\sum_{-1\leq i\leq 1}
\gamma_{[l,m;p,i]}^{(0)}\{
\me_{2(l+m)-(p+m+2i)}^{(2(l+m))}
+(-1)^{\varepsilon (\sigma_0 ;l)+m}
\me_{p+m+2i}^{(2(l+m))}
\}.\label{eqn:pf_th_002}
\end{align}
From the equations (\ref{eqn:fmne}), (\ref{eqn:pf_th_002}) and 
\[
\varepsilon (\sigma_0 ;l)+m\equiv 
\varepsilon (\sigma_0 ;l+m)\bmod 2,
\]
we obtain the assertion.
\end{proof}

\section{The $(\g ,K)$-module structures of 
the generalized principal series representations}
\label{sec:structure2}
In this section, we set $i=1$ or $2$.

\subsection{Discrete series representations of $SL^\pm (2,\mR )$}

We set $y_0=\diag (1,-1)\in O(2)$. Then  a discrete series representation 
$(D_k,V_{D_k})$ of $SL^\pm (2,\mR )$ is uniquely determined by 
specifying the $G'=SL(2,\mR )$-module structure together with the action of $y_0$. 
Since $D_k|_{G'}=D_k^+\oplus D_k^-$ and $D_k^+\oplus D_k^-$ is identified 
with $G'$-submodule of the principal series representation 
$(\pi_{(\nu ,\sigma )} ,H_{(\nu ,\sigma )} )$ of $G'$ by 
Proposition \ref{prop:discrete}, we obtain the following realization 
of $(D_k,V_{D_k})$:
\begin{align*}
V_{D_k,O(2)}=&\bigoplus_{\alpha \in \mZ_{\geq 0}}W_{k+2\alpha }&
\big(W_p=&\mC \cdot \chi_p+\mC \cdot 
\chi_{-p}\big)
\end{align*}
and 
\begin{align*}
D_k(w)\chi_{p}&=\sI p\chi_{p},&
D_k(x_+)\chi_{p}&=(k+p)\chi_{p+2},&
D_k(x_-)\chi_{p}&=(k-p)\chi_{p-2},\\
D_k(\kappa_t)\chi_{p}&=e^{\sI pt}\chi_{p}& (t\in \mR),&&
D_k(y_0)\chi_{p}=&\chi_{-p}.
\end{align*}
Here we denote differential of $D_k$ again by $D_k$ and 
the $O(2)$-finite part of $V_{D_k}$ by $V_{D_k,O(2)}$.

\subsection{Irreducible decompositions of 
$(\pi_{(\nu_1,\sigma_1)}|_K,H_{(\nu_1,\sigma_1)})$ 
and $(\pi_{(\nu_2,\sigma_2)}|_K,H_{(\nu_2,\sigma_2)})$ 
as $K$-modules}
\label{subsec:peter-weyl12}

We analyzes the $K$-type of the representation space 
$H_{(\nu_i,\sigma_i)}$ of the $P_i$-principal series 
representation. 
the target space $V_{\sigma_i}$ of functions 
$\mathbf{f}$ in $H_{(\nu_i,\sigma_i)}$ has a decomposition:
\[
V_{\sigma_i}=V_{D_k}=
\hatoplus_{\alpha \in \mZ_{\geq 0}}W_{k+2\alpha }.
\]
Denote the corresponding decomposition of $\mathbf{f}$ by
\[
\mathbf{f}(x)=\sum_{\alpha =0}^{\infty}
(f_{k+2\alpha }(x)\otimes \chi_{k+2\alpha }
+f_{-(k+2\alpha )}(x)\otimes \chi_{-(k+2\alpha )}).
\]
From the definition of the space $H_{(\nu_i,\sigma_i)}$, 
we have
\begin{align*}
\mathbf{f}|_K(mx)&=\sigma_i(m)\mathbf{f}|_K(x)&
(\text{a.e. } x\in K,\ m\in K_i=M_i\cap K\simeq O(2)).
\end{align*}
For $m=m_i(\kappa_t),\ m_i(y_0)$, comparing the coefficients of $\chi_p$ 
in the left hand side with those in the right hand side, we have the equations
\begin{align*}
f_p|_K(m_i(\kappa_t)x)&=e^{\sI pt}f_p|_K(x),&
f_p|_K(m_i(y_0)x)&=f_{-p}|_K(x).
\end{align*}

Moreover, from the equality of inner products
\[
\int_K\| \mathbf{f}|_K(x)\|_{\sigma_i}^2 dx
=\sum_{\varepsilon \in \{\pm 1\} ,\ \alpha \in \mZ_{\geq 0}}
\left\{
\int_K\left| f_{\varepsilon (k+2\alpha )}|_K(x)\right| dx
\right\}
\|\chi_{\varepsilon (k+2\alpha )}\|_{\sigma_i}^2,
\]
we have $f_p|_K\in L^2(K)$. 
Therefore $\mathbf{f}|_K$ belongs to
\[
\hatoplus_{\alpha \in \mZ_{\geq 0}}L^2_i(K;W_{k+2\alpha})
\]
where
\begin{align*}
&L^2_i(K;W_p)
=\{\mathbf{f}\colon K\to W_p\mid 
\mathbf{f}(x)=f(x)\otimes \chi_p+f(m_i(y_0)x)\otimes \chi_{-p},\ 
f\in L^2_{(K_i^\circ ,\chi_p)}(K), x\in K\},\\
&L^2_{(K_i^\circ ,\chi_p)}(K)=\{f\in L^2(K)\mid 
f(m_i(\kappa_t)x)=e^{\sI pt}f(x),\ m_i(\kappa_t)\in K_i^\circ ,\ x\in K\}.
\end{align*}
Here $K_i^\circ $ means the connected component of $K_i$, 
which is isomorphic to $SO(2)$. 
We easily see that the restriction map
\[
r_K^{(i)}\colon H_{(\nu_i,\sigma_i)}\ni \mathbf{f}\mapsto 
\mathbf{f}|_K\in \hatoplus_{\alpha \in \mZ_{\geq 0}}L^2_i(K;W_{k+2\alpha})
\]
is a $K$-isomorphism. 

By the Peter-Weyl's theorem, 
we have an irreducible decomposition of 
$L^2_{(K_i^\circ ,\chi_p)}(K)$: 
\[
L^2_{(K_i^\circ ,\chi_p)}(K)\simeq \hatoplus_{l \in \mZ_{\geq 0}} 
(V_{2l}^{*}[\xi_{(i;-p)} ])\otimes_\mC V_{2l} .
\]
Here 
\[
\xi_{(i;p)}\colon K_i^\circ \ni m_i(\kappa_t)\mapsto e^{\sI pt}\in \mC^\times
\]
and 
$V[\xi_{(i;p)}]$ means the 
$\xi_{(i;p)}$-isotypic component 
in $(\tau|_{K_i^\circ },V)$ for a $K$-module $(\tau ,V)$.

In this section, we denote by $\{v_{1,q}^{(2l)}\mid 0\leq q\leq 2l\}$ 
the standard basis of $V_{2l}$. 
We define an another basis 
$\{v_{2,q}^{(2l)}\mid 0\leq q\leq 2l\}$ of $V_{2l}$ by
\[
v_{2,q}^{(2l)}=\tau_{2l}(u_c)v_{1,q}^{(2l)}
=\frac{1}{2^l}(x+y)^q(-x+y)^{2l-q}
\quad (0\leq q\leq 2l)
\] 
where 
\[
u_c=
\left(\begin{array}{ccc}
0&0&-1\\
0&1&0\\
1&0&0
\end{array}\right)\in SO(3).
\] 
We note that 
$v\in V_{2l}[\xi_{(i;-p)} ]$ if and only if 
\begin{align*}
\tau_{2l}(m_i(\kappa_t))v=&\xi_{(i;-p)}(m_i(\kappa_t))v=e^{-\sI pt}v&
(t\in \mR )
\end{align*}
for $v\in V_{2l}$.
From the definition of $(\tau_{2l},V_{2l})$ and 
\begin{align*}
\varphi^{-1}(m_1(\kappa_t))=\varphi^{-1}(u_c^{-1}m_2(\kappa_t)u_c)
=&\left\{\pm \diag (e^{-\sI t/2},e^{\sI t/2})\right\},
\end{align*}
we have $\tau_{2l}(m_i(\kappa_t))v^{(2l)}_{i,q}
=e^{\sI (q-l)t}v^{(2l)}_{i,q}$. 
Hence we have 
\begin{align*}
V_{2l}[\xi_{(i;-p)}]=&
\left\{\begin{array}{ll}
\mC \cdot v^{(2l)}_{i,l-p}&\text{ if }-l\leq p\leq l,\\
0&\text{ otherwise }.
\end{array}\right.
\end{align*}
By the identification $V_{2l}^{*}=V_{2l}$ in 
Lemma \ref{lem:dual_Krep}, we obtain
\[
L^2_{(K_i^\circ ,\chi_p)}(K)\simeq 
\underset{-l\leq p\leq l}{\hatoplus_{l \in \mZ_{\geq 0}} }
(\mC \cdot v^{(2l)*}_{i,l+p})\otimes_\mC V_{2l}.
\]
Moreover, since 
\begin{align*}
\varphi^{-1}(m_1(y_0))&=\left\{\pm 
\left(\begin{array}{cc}
0&1\\
-1&0
\end{array}\right)
\right\},&
\varphi^{-1}(u_c^{-1}m_2(y_0)u_c)&=
\left\{\pm 
\left(\begin{array}{cc}
0&\sI\\
\sI&0
\end{array}\right)
\right\},
\end{align*}
we have 
\begin{align*}
\tau_{2l}^*(m_1(y_0)^{-1})v^{(2l)*}_{1,l+p}&=(-1)^{l+p}v^{(2l)*}_{1,l-p},&
\tau_{2l}^*(m_2(y_0)^{-1})v^{(2l)*}_{2,l+p}&=(-1)^{l}v^{(2l)*}_{2,l-p}.
\end{align*}
For $0\leq p\leq l-k$ such that $p\equiv l-k\bmod 2$, 
we define the elementary function 
$t_i(l;p,q)\in H_{(\nu_i,\sigma_i)}$ by 
\[
t_i(l;p,q)=r_K^{(i)-1}(\tilde{t}_i(l;p,q))
\]
where
\begin{align*}
\tilde{t}_1(l;p,q)(x)=&
\langle v^{(2l)*}_{1,2l-p},\tau_{2l}(x)v^{(2l)}_{1,q}\rangle \otimes \chi_{l-p}
+(-1)^{p}\langle v^{(2l)*}_{1,p},
\tau_{2l}(x)v^{(2l)}_{1,q}\rangle \otimes \chi_{p-l},\\
\tilde{t}_2(l;p,q)(x)=&
\langle v^{(2l)*}_{2,2l-p},\tau_{2l}(x)v^{(2l)}_{1,q}\rangle \otimes \chi_{l-p}
+(-1)^{l}\langle v^{(2l)*}_{2,p},
\tau_{2l}(x)v^{(2l)}_{1,q}\rangle \otimes \chi_{p-l}.
\end{align*}

Let $T_i(l;p)$ be a column vector of degree 
$2l+1$ with its $q+1$-th component $t_i(l;p,q)$, i.e. 
${}^t(\ t_i(l;p,0),\ t_i(l;p,1),\ \cdots \ ,t_i(l;p,2l)\ ).$

Moreover we denote by $\langle T_i(l;p)\rangle $ 
the subspace of $H_{(\nu_i,\sigma_i)}$ generated by 
the functions in the entries 
of the vector $T_i(l;p)$, i.e. 
$\langle T_i(l;p)\rangle =
\bigoplus_{q=0}^{2l}\mC \cdot t_i(l;p,q)\simeq V_{2l}$. 
Via the unique isomorphism between $\langle T_i(l;p)\rangle$ 
and $V_{2l}$, we identify $\{t_i(l;p,q)\mid 0\leq q\leq 2l\}$ 
with the standard basis. 

From above arguments, we obtain the following. 
\begin{prop}
As an unitary representation of $K$, it has an irreducible 
decomposition:
\[
H_{(\nu_i,\sigma_i)}
=\underset{p\equiv l-k\bmod 2}
{\hatoplus_{l\in \mZ_{\geq 0},\ 0\leq p\leq l-k}}
\langle T_i(l;p )\rangle
\]
for $i=1,2$. 
Then the $\tau_{2l}$-isotypic component of 
$\pi_{(\nu_i,\sigma_i)}$ is given by 
\[
\underset{p\equiv l-k\bmod 2}{\bigoplus_{0\leq p\leq l-k}}
\langle T_i(l;p ) \rangle .
\]
\end{prop}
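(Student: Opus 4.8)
The plan is to establish the $K$-module decomposition in three stages, making full use of the $K$-isomorphism $r_K^{(i)}$ already set up. First I would transport the problem along $r_K^{(i)}$ to the model space $\hatoplus_{\alpha \in \mZ_{\geq 0}} L^2_i(K;W_{k+2\alpha})$. Since this is a Hilbert space direct sum over $\alpha$, it suffices to decompose each summand $L^2_i(K;W_n)$ separately, where the weight $n=k+2\alpha$ runs through $k,k+2,k+4,\dots$, and then to reassemble the pieces.

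The crucial structural point, which I would establish next, is that the projection $\mathbf{f}\mapsto f$ onto the $\chi_n$-component is a $K$-isomorphism $L^2_i(K;W_n)\simeq L^2_{(K_i^\circ ,\chi_n)}(K)$. This holds because the defining relation $\mathbf{f}(x)=f(x)\otimes \chi_n+f(m_i(y_0)x)\otimes \chi_{-n}$ shows that the $\chi_{-n}$-component carries no independent data; it is forced by $f$ (and one checks, using $m_i(y_0)m_i(\kappa_t)=m_i(\kappa_{-t})m_i(y_0)$, that $x\mapsto f(m_i(y_0)x)$ does lie in $L^2_{(K_i^\circ ,\chi_{-n})}(K)$, so the formula is consistent). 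Feeding this into the Peter-Weyl decomposition of $L^2_{(K_i^\circ ,\chi_n)}(K)$ recalled above, together with the computation $V_{2l}[\xi_{(i;-n)}]=\mC\cdot v^{(2l)}_{i,l-n}$ for $-l\le n\le l$ and $0$ otherwise, yields $L^2_i(K;W_n)\simeq \hatoplus_{l\geq n}V_{2l}$ with each multiplicity exactly one (recall $n\geq k\geq 2>0$, so the only constraint is $n\le l$). Writing $n=l-p$, the condition $k\le n\le l$ becomes $0\le p\le l-k$ together with $p\equiv l-k\bmod 2$, which is precisely the index set in the proposition; running over all $\alpha$ then recovers the stated $\tau_{2l}$-multiplicity.

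It remains to identify the abstract Peter-Weyl generators with the explicit functions $t_i(l;p,q)$. Here I would invoke Lemma \ref{lem:dual_Krep} to pass between $V_{2l}^*$ and $V_{2l}$, and insert the weight vector $v^{(2l)}_{i,l-p}$ (for $i=2$, the $u_c$-twisted basis vector) into the matrix-coefficient map. Expanding the resulting matrix coefficient and its precomposition with $m_i(y_0)$ reproduces the two terms of $\tilde{t}_i(l;p,q)$, and the parity factors $(-1)^p$ and $(-1)^l$ are exactly those dictated by the previously computed actions $\tau_{2l}^*(m_1(y_0)^{-1})v^{(2l)*}_{1,l+p}=(-1)^{l+p}v^{(2l)*}_{1,l-p}$ and $\tau_{2l}^*(m_2(y_0)^{-1})v^{(2l)*}_{2,l+p}=(-1)^{l}v^{(2l)*}_{2,l-p}$, so that $\mathbf{f}|_K$ genuinely satisfies the law $f_n|_K(m_i(y_0)x)=f_{-n}|_K(x)$ and lands in $L^2_i(K;W_{l-p})$. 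Since $\langle T_i(l;p)\rangle\simeq V_{2l}$ by construction, the subspaces $\langle T_i(l;p)\rangle$ exhaust the $\tau_{2l}$-isotypic component, and their distinct supports in the summands $W_{k+2\alpha}$ make the sum over $p$ a genuine direct sum, proving both displayed formulas.

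I expect the main obstacle to be the sign and normalization bookkeeping in this final stage, specifically the $y_0$-equivariance check for both $i=1$ and $i=2$: one must verify that the parity factors built into $\tilde{t}_i$ are exactly those forced by the action of $m_i(y_0)$ on the dual standard basis, and, for $i=2$, that conjugation by $u_c$ correctly transports the weight data of $SO(2)\subset M_2$ into the $u_1$-diagonal normal form used in Subsection \ref{subsec:rep_su2}. Once these equivariance identities are confirmed, the remaining steps are formal consequences of Peter-Weyl and the multiplicity computation.
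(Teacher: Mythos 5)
Your proposal is correct and follows essentially the same route as the paper, whose proof of this proposition is exactly the chain of arguments you reconstruct: reduction along $r_K^{(i)}$ to the summands $L^2_i(K;W_{k+2\alpha})$, the observation that each summand is $K$-isomorphic to $L^2_{(K_i^\circ ,\chi_{k+2\alpha})}(K)$ via its $\chi_{k+2\alpha}$-component, Peter--Weyl together with the one-dimensionality of $V_{2l}[\xi_{(i;-n)}]$, the reindexing $n=l-p$ (which yields the index set $0\leq p\leq l-k$, $p\equiv l-k\bmod 2$), and the $m_i(y_0)$-parity computations identifying the resulting matrix coefficients with $\tilde{t}_i(l;p,q)$. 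One harmless index slip: after the substitution $n=l-p$, the weight vector to insert is $v^{(2l)}_{i,l-n}=v^{(2l)}_{i,p}$, whose dual under Lemma \ref{lem:dual_Krep} is a multiple of $v^{(2l)*}_{i,2l-p}$ as in the paper's definition of $\tilde{t}_i$, rather than $v^{(2l)}_{i,l-p}$ as you wrote.
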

\begin{cor}
Let $d(\sigma_i ;l)$ be the dimension of the space 
$\Hom_K(V_{2l} ,H_{(\nu_i,\sigma_i),K})$ of intertwining operators. 
Then
\[
d(\sigma_i ;l)=\left\{
\begin{array}{ll}
(l-k+2)/2&\text{if $k\leq l$ and $l-k$ is even,}\\
(l-k+1)/2&\text{if $k\leq l$ and $l-k$ is odd,}\\
0&\text{if $k> l$.}
\end{array}
\right.
\]
\end{cor}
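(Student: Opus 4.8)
The plan is to read off $d(\sigma_i;l)$ directly from the irreducible decomposition in the preceding Proposition by counting summands. First I would invoke Schur's lemma: since each block $\langle T_i(l;p)\rangle$ is isomorphic to the irreducible $K$-module $V_{2l}$ (as recorded when $\langle T_i(l;p)\rangle$ was introduced), the space $\Hom_K(V_{2l},H_{(\nu_i,\sigma_i),K})$ has dimension equal to the multiplicity of $\tau_{2l}$ in $H_{(\nu_i,\sigma_i)}$. By the Proposition the $\tau_{2l}$-isotypic component is $\bigoplus \langle T_i(l;p)\rangle$, the sum ranging over $0\leq p\leq l-k$ with $p\equiv l-k \bmod 2$, so $d(\sigma_i;l)$ is simply the number of indices $p$ occurring in that sum.

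Thus the computation reduces to a parity count: I must count the integers $p$ with $0\leq p\leq l-k$ and $p\equiv l-k \bmod 2$. If $l<k$ the range $\{0,\dots,l-k\}$ is empty, giving $d(\sigma_i;l)=0$, which accounts for the third case. If $l\geq k$, put $N=l-k\geq 0$; the admissible $p$ are exactly $0,2,4,\dots,N$ when $N$ is even, and $1,3,5,\dots,N$ when $N$ is odd. In either case there are $\lfloor N/2\rfloor+1$ of them, namely $(l-k+2)/2$ when $l-k$ is even and $(l-k+1)/2$ when $l-k$ is odd. These are precisely the first two cases of the asserted formula, completing the proof.

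There is essentially no obstacle here beyond bookkeeping: the entire analytic content, that is, the Peter--Weyl decomposition of $H_{(\nu_i,\sigma_i)}$ into the blocks $\langle T_i(l;p)\rangle$ together with the determination of which $p$ survive the $K_i$-equivariance constraints, has already been carried out in the Proposition. The only point requiring a moment's care is confirming that the congruence $p\equiv l-k \bmod 2$ pins the parity of each admissible $p$ to that of $l-k$, so that the admissible indices form an arithmetic progression of common difference $2$ starting at $0$ or $1$; the resulting even/odd split then matches the two nonzero cases in the statement exactly.
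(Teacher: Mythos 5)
Your proof is correct and follows the same route as the paper, which states the corollary as an immediate consequence of the preceding proposition: the multiplicity of $\tau_{2l}$ equals the number of indices $p$ with $0\leq p\leq l-k$ and $p\equiv l-k \bmod 2$, and your parity count of that arithmetic progression matches the asserted formula in all three cases.
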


\subsection{The canonical blocks of elementary functions}
\label{subsec:canonical_blocks12}

By the identification introduced in 
Subsection \ref{subsec:canonical_blocks}, 
we identify $T_i(l;p)$ with
the injective $K$-homomorphism 
\[
V_{2l} \ni v^{(2l)}_{1,q}\mapsto t_i(l;p,q)
\in H_{(\nu_i,\sigma_i),K},\quad 0\leq q\leq 2l
\]
for $0\leq p \leq l-k$ such that $p\equiv l-k \bmod 2$. 
We note that $\{ T_i(l;p) \mid 0\leq p \leq l-k,\ p\equiv l-k \bmod 2\} $ 
is a basis of $\Hom_K (V_{2l},H_{(\nu_i,\sigma_i),K})$ and we call 
it \textit{the induced basis from the standard basis}.

We define a certain matrix of elementary functions corresponding to 
the induced basis 
$\{ T_i(l;p) \mid 0\leq p \leq l-k,\ p\equiv l-k \bmod 2\} $ 
of $\Hom_K (V_{2l},H_{(\nu_i,\sigma_i),K})$
for each $K$-type $\tau_{2l}$ of 
our $P_i$-principal series representation $\pi_{(\nu_i,\sigma_i)}$.
\begin{defn}
\textit{
For $l\in \mZ_{\geq 0}$ such that $d(\sigma_i ;l)>0$, the following 
$(2l+1)\times d(\sigma_i ;l)$ matrix $\mT_i (\sigma_i ;l)$ is called
\textit{the canonical block of elementary functions} 
for $\tau_{2l}$-isotypic component:
When $l-k$ is even, we consider the matrix
\begin{align*}
\mT_i(\sigma_i ;l)=&
(\ T_i(l;0),\ T_i(l;2),\ T_i(l;4),\ \cdots \ ,
T_i(l;l-k)\ ).
\end{align*}
When $l-k$ is odd, we consider the matrix
\begin{align*}
\mT_i(\sigma_i ;l)=&
(\ T_i(l;1),\ T_i(l;3),\ T_i(l;5),\ \cdots \ ,
T_i(l;l-k)\ ).
\end{align*}
}
\end{defn}

\subsection{The contiguous relations}
\label{subsec:matrix_rep}

\begin{lem}
\label{lem:notIwasawa}
\textit{
(i) The standard basis $\{X_{j}\mid 0\leq j\leq 4\}$ of $\gp_\mC$ 
have the following expressions according to the decomposition 
$\g_\mC =\gn_{1,\mC} \oplus \ga_{1,\mC} \oplus \gm_{1,\mC} \oplus \gk_\mC $:
\begin{align*}
X_{0}=&m_1(x_-),&
X_{1}=&-(E_{e_1-e_3}+\sI E_{e_1-e_2})
	+\frac{1}{2}(K_{13}+\sI K_{12}),\\
X_{2}=&-\frac{1}{3}H^{(1)},&
X_{3}=&(E_{e_1-e_3}-\sI E_{e_1-e_2})
	-\frac{1}{2}(K_{13}-\sI K_{12}),&
X_{4}=&m_1(x_+).
\end{align*}
(ii) The basis 
$\{X_{j}'=u_cX_{j}u_c^{-1}\mid 0\leq j\leq 4\}$ of $\gp_\mC$ 
have the following expressions according to the decomposition 
$\g_\mC =\gn_{2,\mC} \oplus \ga_{2,\mC} \oplus \gm_{2,\mC} \oplus \gk_\mC $:
\begin{align*}
X_{0}'=&-m_2(x_-),&
X_{1}'=&(E_{e_1-e_3}-\sI E_{e_2-e_3})
	-\frac{1}{2}(K_{13}-\sI K_{23}),\\
X_{2}'=&\frac{1}{3}H^{(2)},&
X_{3}'=&-(E_{e_1-e_3}+\sI E_{e_2-e_3})
	+\frac{1}{2}(K_{13}+\sI K_{23}),&
X_{4}'=&-m_2(x_+),
\end{align*}
}
\end{lem}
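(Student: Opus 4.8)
The plan is to reduce the whole statement to direct matrix computation, anchored by the explicit forms of $X_0,\dots ,X_4$ in Lemma \ref{lem:ad_K_p} and the Iwasawa expressions already recorded in Lemma \ref{lem:Iwasawa}. For part (i) I would first make the four summands of $\g_\mC =\gn_{1,\mC}\oplus \ga_{1,\mC}\oplus \gm_{1,\mC}\oplus \gk_\mC$ concrete: $\gn_{1,\mC}=\mC E_{e_1-e_2}\oplus \mC E_{e_1-e_3}$, $\ga_{1,\mC}=\mC H^{(1)}$ with $H^{(1)}=2H_1-H_2$, $\gk_\mC =\mC K_{12}\oplus \mC K_{13}\oplus \mC K_{23}$, and $\gm_{1,\mC}=dm_1(\gs \gl (2,\mC ))$. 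The key observation is that $\gm_{1,\mC}$ is exactly the set of matrices with vanishing first row and column whose lower-right $2\times 2$ block has trace zero, and that $dm_1(X)=\diag (-\tr X)\oplus X$. Granting this, $X_1$ and $X_3$ are literally the Iwasawa expressions of Lemma \ref{lem:Iwasawa} reread in the new grading (the root vectors $E_{e_1-e_2},E_{e_1-e_3}$ now lie in $\gn_{1,\mC}$ rather than in $\gn_{0,\mC}$), and $X_2=-\tfrac{1}{3}H^{(1)}$ is immediate. For $X_0$ and $X_4$ I would simply match entries: each has zero first row and column and trace-zero lower block equal to $x_-$ and $x_+$ respectively, whence $X_0=m_1(x_-)$ and $X_4=m_1(x_+)$ after recalling the form of $dm_1$.

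For part (ii), since $u_c\in SO(3)=K$, conjugation by $u_c$ is $\Ad (u_c)$ and preserves $\gp_\mC$, so each $X_j'$ lies in $\gp_\mC$; I would then compute $X_j'=u_cX_ju_c^{-1}$ as a $3\times 3$ matrix and read off its components in $\g_\mC =\gn_{2,\mC}\oplus \ga_{2,\mC}\oplus \gm_{2,\mC}\oplus \gk_\mC$, where now $\gn_{2,\mC}=\mC E_{e_1-e_3}\oplus \mC E_{e_2-e_3}$, $\ga_{2,\mC}=\mC H^{(2)}$, and $\gm_{2,\mC}=dm_2(\gs \gl (2,\mC ))$ is the trace-zero upper-left $2\times 2$ block with $dm_2(X)=X\oplus (-\tr X)$. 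The diagonal piece is the cleanest: $u_c$ acts on the standard basis by $e_1\mapsto e_3,\ e_2\mapsto e_2,\ e_3\mapsto -e_1$, so $\Ad (u_c)H^{(1)}=\diag (-1,-1,2)=-H^{(2)}$, which gives $X_2'=\tfrac{1}{3}H^{(2)}$. For $X_0'$ and $X_4'$ the computed matrices have vanishing third row and column with upper-left block $-x_\mp$, hence $X_0'=-m_2(x_-)$ and $X_4'=-m_2(x_+)$; the remaining $X_1'$ and $X_3'$ follow by the same entrywise matching against $E_{e_1-e_3},E_{e_2-e_3},K_{13},K_{23}$.

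The computations are routine, so the only real obstacle is bookkeeping of signs. Because $u_c$ carries a sign on $e_3$, one finds $\Ad (u_c)E_{e_1-e_3}=-E_{e_3-e_1}$, so $\Ad (u_c)$ does \emph{not} map $\gn_{1,\mC}$ into $\gn_{2,\mC}$ but rather spreads each $X_j$ across the new grading; this is precisely why a term-by-term transport of the part (i) formulae fails and a fresh matrix computation is cleaner. The minus signs in $X_0'=-m_2(x_-)$ and $X_4'=-m_2(x_+)$ trace back to this sign together with the opposite embedding conventions of $m_1$ (lower-right block) and $m_2$ (upper-left block). To pin all conventions down at once, I would verify a single entry of $X_1'$ explicitly against the stated expression $(E_{e_1-e_3}-\sI E_{e_2-e_3})-\tfrac{1}{2}(K_{13}-\sI K_{23})$, which simultaneously fixes the signs in every remaining case.
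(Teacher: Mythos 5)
Your proposal is correct and follows essentially the same route as the paper, whose proof of Lemma \ref{lem:notIwasawa} is simply a direct verification from the explicit matrices $X_j$ of Lemma \ref{lem:ad_K_p}; your entrywise matching (e.g.\ $X_0=m_1(x_-)$ via $dm_1(X)=\diag(-\tr X)\oplus X$, and $\Ad(u_c)H^{(1)}=-H^{(2)}$ giving $X_2'=\tfrac{1}{3}H^{(2)}$) checks out, as does your warning that $\Ad(u_c)E_{e_1-e_3}=-E_{e_3-e_1}$ prevents transporting part (i) termwise. Your version just spells out the bookkeeping the paper leaves implicit.
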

\begin{proof}
We obtain the assertion immediately from Lemma \ref{lem:ad_K_p}. 
\end{proof}

We give the matrix representation of $\Gamma_{l,m}^i$ 
with respect to the induced basis as follows.

\begin{thm}
\label{th:main2}
For $i=1,2$ and $-2\leq m\leq 2$, we have an following equation 
with the matrix representation 
$R(\Gamma_{l,m}^i)\in M_{d(\sigma_i ;l+m),d(\sigma_i ;l)}(\mC )$ 
of $\Gamma_{l,m}^i$ with respect to the induced basis 
$\{ T_i(l;p) \mid 0\leq p\leq l-k,\ p\equiv l-k\bmod 2\} $: 
\begin{equation}
\gC_{l,m}\mT_i (\sigma_i ;l)
=\mT_i (\sigma_i ;l+m)\cdot R(\Gamma_{l,m}^i).
\label{eqn:statement_thm12}
\end{equation}
Explicit expressions of the matrix $R(\Gamma_{l,m}^i)$ of size 
$d(\sigma_i ; l+m)\times d(\sigma_i ;l)$ is given as follows:

The matrix $R(\Gamma_{l,m}^i)$ is given by
\begin{align*}
\left(\begin{array}{c}
\mmhs O_{n(\sigma_i ;l,m),d(\sigma_i ;l)}\mmhs \\[1mm]
R(\Gamma_{l,m}^i)
\end{array}\right) 
=&\left(\begin{array}{c}
\mmhs \underset{0\leq j\leq d(\sigma_i ;l)-1}{\Diag}
\mhs \left(\gamma_{[l,m;2j+\delta (\sigma_i ;l),-1]}^{(i)}\right)\mhs \\[1mm]
O_{1,d(\sigma_i ;l)}
\end{array}\right)
+\left(\begin{array}{c}
O_{1,d(\sigma_i ;l)}\\[1mm]
\mmhs \underset{0\leq j\leq d(\sigma_i ;l)-1}{\Diag}
\mhs \left(\gamma_{[l,m;2j+\delta (\sigma_i ;l),0]}^{(i)}\right)\mhs 
\end{array}\right)\\
&+\left(\begin{array}{cc}
O_{2,d(\sigma_i ;l)-1}&O_{2,1}\\
\mmhs \underset{0\leq j\leq d(\sigma_i ;l)-2}{\Diag}
\mhs \left(\gamma_{[l,m;2j+\delta (\sigma_i ;l),1]}^{(i)}\right)&
O_{d(\sigma_i ;l)-1,1}
\end{array}\right).
\end{align*}

Here 
\begin{align*}
\gamma_{[l,m;p,1]}^{(i)}
=&(-1)^{i+1}(k-l+p)A_{[2l,2m;2l-p+m-2,0]},\\
\gamma_{[l,m;p,0]}^{(i)}
=&\frac{(-1)^{i}}{3}
\Big(\nu_i+\rho_i+lm-3+\frac{m(m+1)}{2}\Big)
A_{[2l,2m;2l-p+m,2]},\\
\gamma_{[l,m;p,-1]}^{(i)}
=&(-1)^{i+1}(k+l-p)A_{[2l,2m;2l-p+m+2,4]},\\
n(\sigma_i ;l,m)
=&\left\{\begin{array}{ll}
(2-m)/2&\text{ if } m\in \{0,\ \pm 2\},\\
(3-m)/2&\text{ if } (m,l-k)\in \{\pm 1\}\times (2\mZ ),\\
(1-m)/2&\text{ if } (m,l-k)\in \{\pm 1\}\times (1+2\mZ ),
\end{array}\right.
\end{align*}
and $\delta (\sigma_i ;l)\in \{0,1\}$ such that 
$\delta (\sigma_i ;l)\equiv l-k\bmod 2$.

In the above equations, we put $A_{[2l,2m;p,j]}=0$ for $p<0$ or $p>2(l+m)$, 
and erase the symbols 
$\underset{c_0\leq n\leq c_1}{\Diag}(f(n))\ (c_0>c_1),\ 
O_{m,n}\ (m\leq 0\text{ or }n\leq 0)$.
\end{thm}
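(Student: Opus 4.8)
The plan is to imitate the proof of Theorem~\ref{th:main}, replacing the one–dimensional character $\sigma_0$ by the discrete series $\sigma_i=D_k$ and the Iwasawa decomposition of Lemma~\ref{lem:Iwasawa} by the parabolic decomposition of Lemma~\ref{lem:notIwasawa}. First I would reduce the identity~(\ref{eqn:statement_thm12}) to its value at $1_3$. From the definition of $t_i(l;p,q)$ and $\tau_{2l}(1_3)=\id$ one gets
\[
T_i(l;p)(1_3)=\me_{2l-p}^{(2l)}\otimes \chi_{l-p}\pm \me_{p}^{(2l)}\otimes \chi_{p-l}
\]
(with sign $(-1)^p$ for $i=1$ and $(-1)^l$ for $i=2$). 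As $p$ runs over the admissible indices $0\leq p\leq l-k$, $p\equiv l-k\bmod 2$, the weights $l-p$ are pairwise distinct, so these vectors are linearly independent in $\mC^{2l+1}\otimes_\mC V_{D_k}$, and likewise at level $l+m$. Since both sides of~(\ref{eqn:statement_thm12}) are $K$-homomorphisms into the $\tau_{2(l+m)}$-isotypic component, hence lie in the span of $\{T_i(l+m;p')\}$, it suffices to check~(\ref{eqn:statement_thm12}) after evaluation at $1_3$.

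Next I would compute $\{\pi_{(\nu_1,\sigma_1)}(X_j)t_1(l;p,q)\}(1_3)$ for $0\leq j\leq 4$, taking $i=1$ as the representative case, by decomposing $X_j$ according to $\g_\mC=\gn_{1,\mC}\oplus\ga_{1,\mC}\oplus\gm_{1,\mC}\oplus\gk_\mC$ as in Lemma~\ref{lem:notIwasawa}(i). The new feature, absent in the $P_0$ case, is the $\gm_{1,\mC}$-part: since $X_0=m_1(x_-)$ and $X_4=m_1(x_+)$ lie entirely in $\gm_{1,\mC}$, at $1_3$ they act through the differential of $\sigma_1=D_k$, i.e.\ via $D_k(x_\pm)\chi_w=(k\pm w)\chi_{w\pm 2}$, which shifts the discrete–series weight $w=l-p$ by $\pm 2$ while leaving the $V_{2l}$-index fixed. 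The remaining pieces behave as in Theorem~\ref{th:main}: the $\gn_{1,\mC}$-parts of $X_1,X_3$ annihilate the functions at $1_3$ by left $N_1$-invariance; the $\ga_{1,\mC}$-part $-\tfrac13 H^{(1)}$ of $X_2$ contributes the scalar $-\tfrac13(\nu_1+\rho_1)$; and the $\gk_\mC$-parts $\pm\tfrac12(K_{13}\pm\sI K_{12})$ of $X_1,X_3$ act through $\tau_{2l}$ on the standard basis $\{v_{1,q}^{(2l)}\}$, raising or lowering the index $q$ exactly as before.

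Assembling these into $\{\gC_{l,m}T_1(l;p)\}(1_3)$ through the definition in Subsection~\ref{subsec:p-matrix} together with $R(\iota_j^{(l,m)})\me_q^{(2l)}=A_{[2l,2m;j+q+m-2,j]}\me_{j+q+m-2}^{(2(l+m))}$, I expect exactly three families of terms: the lowering operator $X_0$ lands on $T_1(l+m;p+m+2)$ with coefficient proportional to $(k-l+p)A_{[2l,2m;2l-p+m-2,0]}$, the raising operator $X_4$ on $T_1(l+m;p+m-2)$ via $(k+l-p)A_{[2l,2m;2l-p+m+2,4]}$, and the triple $X_1,X_2,X_3$ on the diagonal term $T_1(l+m;p+m)$. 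On this diagonal the $A_{[\,\cdot\,,1]}$ and $A_{[\,\cdot\,,3]}$ contributions from the $\gk_\mC$-parts of $X_1,X_3$ fuse with the $A_{[\,\cdot\,,2]}$ term of $X_2$ precisely through the third relation of Lemma~\ref{lem:rel_clebsch}, producing $\gamma_{[l,m;p,0]}^{(1)}$. Matching the resulting combination of vectors $\me_{j'}^{(2(l+m))}\otimes\chi_w$ against the evaluations $T_1(l+m;p')(1_3)$ then reads off $R(\Gamma_{l,m}^1)$ as the tridiagonal matrix of the statement, the index offset being absorbed into the leading zero rows $O_{n(\sigma_1;l,m),d(\sigma_1;l)}$.

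I expect the main difficulty to be bookkeeping rather than any new idea: one must keep track of the correlation (forced by $K_i^\circ$-equivariance) between the $V_{2l}$-index and the discrete–series weight $w$, and verify that the shifts $w\mapsto w\pm 2$ produced by $D_k(x_\mp)$ line up with the index shifts produced by the Clebsch–Gordan coefficients, so that each contribution is a genuine multiple of a single target elementary function. For $i=2$ I would not redo the computation from scratch but reduce it to the case $i=1$ by conjugating with $u_c$: working with the basis $v_{2,q}^{(2l)}=\tau_{2l}(u_c)v_{1,q}^{(2l)}$ and the generators $X_j'=u_cX_ju_c^{-1}$ of Lemma~\ref{lem:notIwasawa}(ii), whose $\gm_{2,\mC}$- and $\ga_{2,\mC}$-parts $X_0'=-m_2(x_-)$, $X_4'=-m_2(x_+)$, $X_2'=\tfrac13 H^{(2)}$ carry the opposite signs, reproduces the $i=1$ calculation up to the factors $(-1)^{i+1}$ and $(-1)^{i}$ recorded in $\gamma_{[l,m;p,\pm 1]}^{(i)}$ and $\gamma_{[l,m;p,0]}^{(i)}$; the replacement of $(-1)^p$ by $(-1)^l$ in $T_2(l;p)(1_3)$ accounts for the remaining sign adjustments.
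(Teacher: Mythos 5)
Your overall architecture is the paper's own: reduce to evaluation of both sides of (\ref{eqn:statement_thm12}) at a single point, compute the derivatives there via the parabolic decomposition of Lemma \ref{lem:notIwasawa} (the $\gn_{i,\mC}$-parts vanishing by left invariance, the $\ga_{i,\mC}$-part contributing $\nu_i+\rho_i$, the $\gm_{i,\mC}$-parts $m_i(x_\pm)$ acting through $D_k(x_\pm)\chi_w=(k\pm w)\chi_{w\pm 2}$, and the $\gk_\mC$-parts shifting the index $q$), then fuse the diagonal $A_{[\cdot,1]},A_{[\cdot,3]}$ terms into the $A_{[\cdot,2]}$ term by the third relation of Lemma \ref{lem:rel_clebsch}. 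For $i=1$ this is exactly the paper's proof and your account of it is correct, including the weight bookkeeping $w=l-p$, $p'=p+m+2j$, and the resulting coefficients $\gamma^{(1)}_{[l,m;p,\pm 1]}=(k\mp l\pm p)A_{[2l,2m;2l-p+m\mp 2,\,0\text{ or }4]}$.

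The genuine flaw is your evaluation formula for $i=2$: it is false that $T_2(l;p)(1_3)=\me_{2l-p}^{(2l)}\otimes\chi_{l-p}+(-1)^l\me_{p}^{(2l)}\otimes\chi_{p-l}$. By definition $\tilde{t}_2(l;p,q)$ pairs the \emph{rotated} duals $v^{(2l)*}_{2,2l-p},v^{(2l)*}_{2,p}$ against $\tau_{2l}(x)v^{(2l)}_{1,q}$, so at $x=1_3$ the entries are $\langle v^{(2l)*}_{2,r},v^{(2l)}_{1,q}\rangle$, i.e.\ the coefficients of the change of basis between $\{x^qy^{2l-q}\}$ and $\{2^{-l}(x+y)^a(-x+y)^{2l-a}\}$ --- a full binomial-type matrix, not $\delta_{rq}$. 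This is exactly the obstruction the paper flags (``the value of $T_2(l;p)$ at $1_3\in G$ is not simple''), and a verbatim repetition of the $i=1$ computation for $i=2$ would collapse at the coefficient-matching step. Your final paragraph contains the germ of the repair but states it imprecisely: one must \emph{translate}, not merely conjugate. Put $t_2'(l;p,q)=\pi_{(\nu_2,\sigma_2)}(u_c)t_2(l;p,q)$ and $\gC'_{l,m}=\sum_{j=0}^{4}R(\iota_j^{(l,m)})\otimes X_j'$; since $\pi_{(\nu_2,\sigma_2)}(u_c)\pi_{(\nu_2,\sigma_2)}(X_j)=\pi_{(\nu_2,\sigma_2)}(X_j')\pi_{(\nu_2,\sigma_2)}(u_c)$, the identity (\ref{eqn:statement_thm12}) for $i=2$ is equivalent to $\gC'_{l,m}\mT_2'(\sigma_2;l)=\mT_2'(\sigma_2;l+m)\cdot R(\Gamma^2_{l,m})$, and it is the \emph{translated} vector that enjoys the simple evaluation $T_2'(l;p)(1_3)=T_2(l;p)(u_c)=\me_{2l-p}^{(2l)}\otimes\chi_{l-p}+(-1)^l\me_{p}^{(2l)}\otimes\chi_{p-l}$, because $\tau_{2l}(u_c)v^{(2l)}_{1,q}=v^{(2l)}_{2,q}$ now pairs with the $v_2$-duals by genuine deltas. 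With this correction, the primed computation via Lemma \ref{lem:notIwasawa}(ii) (note $X_0'=-m_2(x_-)$, $X_2'=\tfrac13 H^{(2)}$, $X_4'=-m_2(x_+)$, and $\gk_\mC$-parts involving $K_{23}$) runs parallel to $i=1$ and yields the signs $(-1)^{i+1}$ and $(-1)^i$ as you predicted; your proof then coincides with the paper's, modulo this one essential mislocated evaluation.
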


\begin{proof}
By the similarly computation in the proof of Theorem \ref{th:main} 
using Lemma \ref{lem:notIwasawa} (i), 
we obtain the assertion in the case of $i=1$. 
However, in the case of $i=2$, 
It is difficult to prove the assertion by the same method 
since the value of $T_2(l;p)$ at $1_3\in G$ is not simple. 
We avoid this problem as follows.

We put 
\begin{align*}
t_2'(l;p,j)&=\pi_{(\nu_2,\sigma_2)}(u_c)t_2(l;p,j)
\quad (0\leq j\leq 2l),\\
T_2'(l;p)&={}^t(\ t_2'(l;p,0),\ t_2'(l;p,1),\ \cdots \ ,t_2'(l;p,2l)\ ),\\
\mT_2'(\sigma_2 ;l)&=
\left\{\begin{array}{ll}
(\ T_2'(l;0),\ T_2'(l;2),\ T_2'(l;4),\ \cdots \ ,T_2'(l;l-k)\ )
&\text{if $l-k$ is even,}\\
(\ T_2'(l;1),\ T_2'(l;3),\ T_2'(l;5),\ \cdots \ ,T_2'(l;l-k)\ )
&\text{if $l-k$ is odd,}
\end{array}\right. \\
\gC'_{l,m}&=\sum_{j=0}^4R(\iota_j^{(l,m)})\otimes X_j'.
\end{align*}
Then we see that 
\begin{align}
\gC'_{l,m}\mT_2' (\sigma_2 ;l)
=&\mT_2' (\sigma_2 ;l+m)\cdot R(\Gamma_{l,m}^2),\label{eqn:pf_th2_001}
\end{align}
and
\begin{align*}
T_2'(l;p)(1_3)=&
\me_{2l-p}^{(2l)} \otimes \chi_{l-p}+(-1)^{l}\me_{p}^{(2l)} \otimes \chi_{p-l}.
\end{align*}
Thus, by the similarly computation as in Lemma \ref{lem:notIwasawa} (ii),  
we also obtain the assertion in the case of $i=2$ evaluating 
the both side of the equation (\ref{eqn:pf_th2_001}) at $1_3\in G$. 
\end{proof}

\section{The action of $\gp_\mC$}
\label{sec:examples}
The linear map $\Gamma_{l,m}^i$ characterize the action of $\gp_\mC$. 
In this section, we give a explicit description of 
the action of $\gp_\mC$ on the elementary functions. 

\subsection{The projectors for $V_{l}\otimes_\mC V_4$}

For $-2\leq m\leq 2$, we describe a surjective $\gs \gl (2,\mC )$-homomorphism 
$P^{l}_{2m}$ from $V_{l}\otimes_\mC V_4$ to $V_{l+2m}$
 in terms of the standard basis as follows.

\begin{lem}
\label{lem:projector}
\textit 
Let $\{v_{q}^{(l)}\mid 0\leq q\leq l\} $ be the standard basis of $V_l$ 
for $l\in \mZ_{\geq 0}$. 
We put $v_q^{(l)}=0$ when $q<0$ or $q>l$. 

We define linear maps 
$P^{l}_{2m}\colon V_l\otimes_\mC V_4\to V_{l+2m}\ (-2\leq m \leq 2)$ by 
\[
P^{l}_{2m}(v_q^{(l)}\otimes w_r)
=B_{[l,2m;q,r]}\cdot v_{q+r+m-2}^{(l+2m)},
\]
when $V_{l+2m}$-component of $V_l\otimes_\mC V_4$ does not vanish. 

Here the coefficients $B_{[l,2m;q,r]}=b(l,2m;q,r)/d'(l,2m)$ are defined by 
following formulae. 

\noindent {\bf Formula 1:} 
The coefficients of 
$P^{l}_{4}\colon V_l\otimes_\mC V_4\to V_{l+4}$ 
are given as follows:
\begin{align*}
b(l,4;q,r)=&1\quad (0\leq r\leq 4),&d'(l,4)=&1.
\end{align*}
\noindent {\bf Formula 2:} 
The coefficients of 
$P^{l}_{2}\colon V_l\otimes_\mC V_4\to V_{l+2}$ are given as follows:
\begin{align*}
b(l,2;q,0)=&4q,&
b(l,2;q,1)=&-(l-4q),&
b(l,2;q,2)=&-2(l-2q),\\
b(l,2;q,3)=&-(3l-4q),&
b(l,2;q,4)=&-4(l-q),&
d'(l,2)=&l+4.
\end{align*}
\noindent {\bf Formula 3:} 
The coefficients of 
$P^{l}_{0}\colon V_l\otimes_\mC V_4 \to V_{l}$ are given as follows:
\begin{align*}
b(l,0;q,0)=&6q(q-1),&
b(l,0;q,1)=&-3q(l-2q+1),\\
b(l,0;q,2)=&l^2-6lq+6q^2-l,&
b(l,0;q,3)=&3(l-2q-1)(l-q),\\
b(l,0;q,4)=&6(l-q)(l-q-1),&
d'(l,0)=&(l+3)(l+2).
\end{align*}
\noindent {\bf Formula 4: }
The coefficients of 
$I^{l}_{-2}\colon  V_{l-2}\to V_l\otimes_\mC V_4$ are given as follows:
\begin{align*}
b(l,-2;q,0)=&4q(q-1)(q-2),&
b(l,-2;q,1)=&-q(q-1)(3l-4q+2),\\
b(l,-2;q,2)=&2q(l-2q)(l-q),&
b(l,-2;q,3)=&-(l-4q-2)(l-q)(l-q-1),\\
b(l,-2;q,4)=&-4(l-q)(l-q-1)(l-q-2),&
d'(l,-2)=&(l+2)(l+1)l.
\end{align*}
\noindent {\bf Formula 5:} 
The coefficients of 
$I^{l}_{-4}\colon  V_{l-4}\to V_l\otimes_\mC V_4$ are given as follows:
\begin{align*}
b(l,-4;q,0)=&q(q-1)(q-2)(q-3),&
b(l,-4;q,1)=&-q(q-1)(q-2)(l-q),\\
b(l,-4;q,2)=&q(q-1)(l-q)(l-q-1),&
b(l,-4;q,3)=&-q(l-q)(l-q-1)(l-q-2),\\
b(l,-4;q,4)=&(l-q)(l-q-1)(l-q-2)(l-q-3),\hspace{-12mm}& 
&d'(l,-4)=(l+1)l(l-1)(l-2).
\end{align*}

Then $P^{l}_{2m}$ is the generator of 
$\Hom_{\gs \gl (2,\mC )} (V_l\otimes_\mC V_4,V_{l+2m})$ 
such that $P^{l}_{2m}\circ I^{l}_{2m}=\id_{V_{l+2m}}$.
\end{lem}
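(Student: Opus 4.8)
The plan is to exploit the multiplicity-one decomposition $V_l\otimes_\mC V_4\simeq\bigoplus_{-2\leq m'\leq 2}V_{l+2m'}$ established just before Proposition \ref{prop:injector}. Since the five indices $l-4,l-2,l,l+2,l+4$ are distinct, Schur's lemma forces $\dim_\mC\Hom_{\gs \gl (2,\mC )}(V_l\otimes_\mC V_4,V_{l+2m})=1$ whenever the $V_{l+2m}$-component is nonzero. Consequently, to prove that the explicitly defined $P^{l}_{2m}$ is \emph{the} generator it suffices to verify two things: (i) $P^{l}_{2m}$ is a $\gs \gl (2,\mC )$-homomorphism, and (ii) $P^{l}_{2m}\circ I^{l}_{2m}=\id_{V_{l+2m}}$. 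Indeed, (ii) shows in particular that $P^{l}_{2m}\neq 0$, so by one-dimensionality it spans the Hom space, and (ii) simultaneously pins down the stated normalization.

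For (i), equivariance under $H$ is automatic from weight bookkeeping: by the formulas of Subsection \ref{subsec:rep_su2} the vector $v_q^{(l)}\otimes w_r$ has $H$-weight $l+4-2(q+r)$, while its image $v_{q+r+m-2}^{(l+2m)}$ has $H$-weight $(l+2m)-2(q+r+m-2)=l+4-2(q+r)$, so $P^{l}_{2m}$ preserves weights and hence commutes with $\tau_{l+2m}(H)$. It remains to check commutation with $E$ and with $F$. Applying $(\tau_{l}\otimes\tau_{4})(F)$ to $v_q^{(l)}\otimes w_r$ by the Leibniz rule and the explicit $\gs \gl (2,\mC )$-actions of Subsection \ref{subsec:rep_su2}, then applying $P^{l}_{2m}$, and comparing with $\tau_{l+2m}(F)\circ P^{l}_{2m}(v_q^{(l)}\otimes w_r)=((q+r+m-2)-(l+2m))\,B_{[l,2m;q,r]}\,v_{q+r+m-1}^{(l+2m)}$, reduces the claim to a three-term recurrence among the coefficients $B_{[l,2m;q,r]}$; the analogous computation with $E$ in place of $F$ yields a companion recurrence. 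Each recurrence is confirmed by substituting Formulas 1--5 of the present lemma, exactly in the spirit of the proof of Proposition \ref{prop:injector}. (Alternatively, one may identify $P^{l}_{2m}$ up to scalar with the transpose of $I^{l}_{2m}$ under the self-duality $V_l^*\simeq V_l$ of Lemma \ref{lem:dual_Krep}, which would give the homomorphism property for free; but the coefficient comparison needed to make that identification is no shorter than the direct check, so I would proceed directly.)

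For (ii), I substitute the formula for $I^{l}_{2m}$ into $P^{l}_{2m}$. Writing $I^{l}_{2m}(v_k^{(l+2m)})=\sum_{i=0}^{4}A_{[l,2m;k,i]}\,v_{k+2-m-i}^{(l)}\otimes w_i$ and applying $P^{l}_{2m}(v_q^{(l)}\otimes w_r)=B_{[l,2m;q,r]}\,v_{q+r+m-2}^{(l+2m)}$ with $q=k+2-m-i$ and $r=i$, the target index equals $q+r+m-2=k$ for every $i$, so all five terms land on $v_k^{(l+2m)}$ and
\[
P^{l}_{2m}\circ I^{l}_{2m}(v_k^{(l+2m)})=\Big(\sum_{i=0}^{4}A_{[l,2m;k,i]}\,B_{[l,2m;k+2-m-i,i]}\Big)v_k^{(l+2m)}.
\]
Thus (ii) is equivalent to the single polynomial identity $\sum_{i=0}^{4}A_{[l,2m;k,i]}\,B_{[l,2m;k+2-m-i,i]}=1$ for all admissible $k$, which I verify by inserting Formulas 1--5 of Proposition \ref{prop:injector} and of this lemma. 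The bulk of the labor, and the only genuine obstacle, is this coefficient verification together with the two recurrences of step (i): they are elementary but must be carried out for each of the five values $m\in\{-2,-1,0,1,2\}$, with care taken over the degenerate index ranges (recalling $v_q^{(l)}=0$ for $q<0$ or $q>l$, and likewise for the target basis). Once these identities are confirmed, assertions (i) and (ii) hold, which completes the proof.
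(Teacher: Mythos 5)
Your proof is correct, but it takes a genuinely different route from the paper's. The paper disposes of the lemma in essentially one line by duality: the composite $V_l\otimes_\mC V_4\simeq V_l^*\otimes_\mC V_4^*\simeq (V_l\otimes_\mC V_4)^*\ni f\mapsto f\circ I^{l}_{2m}\in V_{l+2m}^*\simeq V_{l+2m}$ is manifestly a surjective $\gs \gl (2,\mC )$-homomorphism, unique up to scalar by multiplicity one, and the explicit coefficients $B_{[l,2m;q,r]}$ are then read off from Proposition \ref{prop:injector} together with the explicit self-duality $v_k^{(l)}=(-1)^k\frac{(l-k)!k!}{l!}v_{l-k}^{(l)*}$ of Lemma \ref{lem:dual_Krep} --- precisely the ``transpose'' alternative you mention parenthetically and set aside. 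What the paper's route buys is the homomorphism property for free, structurally, with no recurrences among the $B$'s to check; what your direct route buys is self-containedness (no dualization, no bookkeeping with the signs and factorials of Lemma \ref{lem:dual_Krep}), at the cost of two families of three-term recurrences (one for $E$, one for $F$) plus the composition identity $\sum_{i=0}^{4}A_{[l,2m;k,i]}B_{[l,2m;k+2-m-i,i]}=1$, each to be verified for the five values of $m$ with attention to vanishing indices. Both verification loads are of the same ``direct computation'' character that the paper itself accepts in the proof of Proposition \ref{prop:injector}, so your argument meets the paper's own standard of rigor, and its logical skeleton (multiplicity one, nonvanishing via the normalization, hence generation) is sound. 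One economy you leave on the table: once your step (i) is established, Schur's lemma makes $P^{l}_{2m}\circ I^{l}_{2m}$ a scalar on the irreducible $V_{l+2m}$, so the identity in step (ii) need only be checked at a single convenient index, say $k=0$, where most terms drop out because $v_{2-m-i}^{(l)}=0$ for $2-m-i<0$ (for instance, at $m=1$ only $i=0,1$ survive and the sum is $4/(l+4)+l/(l+4)=1$), rather than for all admissible $k$.
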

\begin{proof}
The composite
\[
V_l\otimes_\mC V_4
\simeq V_l^*\otimes_\mC V_4^*
\simeq (V_l\otimes_\mC V_4)^*\ni f\mapsto f\circ I^{l}_{2m}\in 
V_{l+2m}^*\simeq V_{l+2m}
\]
is a surjective $\gs \gl (2,\mC )$-homomorphism from 
$V_l\otimes_\mC V_4$ to $V_{l+2m}$, which is unique up to scalar multiple. 
Therefore we obtain the assertion from Proposition \ref{prop:injector} 
and Lemma \ref{lem:dual_Krep}. 
\end{proof}

\subsection{The action of $\gp_\mC$ on the elementary functions}

\begin{prop}
\label{prop:p_action}
(i) An explicit expression of the action of $\gp_\mC$ on the basis 
$\{s(l;p,q)\mid l\geq 0,\ p\in Z(\sigma_0;l),\ 0\leq q\leq 2l\}$ 
of $H_{(\nu_0,\sigma_0),K}$ is given by following equation:
\begin{align*}
\pi_{(\nu_0,\sigma_0)}(X_r)s(l;p,q)
&=\underset{-2\leq m\leq 2}{\sum_{-1\leq j\leq 1}}
\gamma_{[l,m;p,j]}^{(0)}B_{[2l,2m;q,r]}
s(l+m;p+m+2j,q+m+r-2).
\end{align*}
Here we put
\begin{align*}
\gamma_{[0,m;0,j]}^{(0)}&=B_{[0,2m;0,r]}=0\text{ for }m<2,\quad 
\gamma_{[1,m;p,j]}^{(0)}=B_{[2,2m;q,r]}=0\text{ for }m<0,\\
s(l;p,q)&=0\text{ whenever } p\leq l \text{ such that }p\notin Z(\sigma_0;l) 
\text{ or }q<0\text{ or }q>2l,\\
s(l;p,q)&=(-1)^{\varepsilon (\sigma_0 ;l)}s(l;2l-p,q)
\text{ for }p>l.
\end{align*}
(ii) For $i=1,2$, the explicit expression of the action of $\gp_\mC$ 
on the basis 
$\{ t_i(l;p,q) \mid l\geq k,\ 0\leq p \leq l-k,\ p\equiv l-k \bmod 2
,\ 0\leq q\leq 2l\} $
of $H_{(\nu_i,\sigma_i),K}$ is given by following equation:
\begin{align*}
\pi_{(\nu_i,\sigma_i)}(X_r)t_i(l;p,q)
&=\underset{-2\leq m\leq 2}{\sum_{-1\leq j\leq 1}}
\gamma_{[l,m;p,j]}^{(i)}B_{[2l,2m;q,r]}
t_i(l+m;p+m+2j,q+m+r-2)
\end{align*}
Here we put 
$t_i(l;p,q)=0$ 
unless $0\leq p \leq l-k,\ p\equiv l-k \bmod 2$ and $0\leq q\leq 2l$.
\end{prop}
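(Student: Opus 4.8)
The plan is to reduce the single-vector action $\pi_{(\nu_i,\sigma_i)}(X_r)$ to the two pieces of data already computed: the projectors $P^{l}_{2m}$ of Lemma \ref{lem:projector} and the maps $\Gamma_{l,m}^{i}$ of Theorems \ref{th:main} and \ref{th:main2}. The starting point is the resolution of the identity $\sum_{-2\le m\le 2} I^{2l}_{2m}\circ P^{2l}_{2m} = \id_{V_{2l}\otimes_\mC\gp_\mC}$. Indeed, since $P^{2l}_{2m}\circ I^{2l}_{2m} = \id_{V_{2(l+m)}}$ by Lemma \ref{lem:projector}, each composite $I^{2l}_{2m}\circ P^{2l}_{2m}$ is idempotent with image the $V_{2(l+m)}$-isotypic component; because the decomposition $V_{2l}\otimes_\mC\gp_\mC \simeq \bigoplus_m V_{2(l+m)}$ is multiplicity free, Schur's lemma forces these idempotents to be the canonical isotypic projections, which therefore sum to the identity.

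First I would compose the $K$-homomorphism $\tilde\eta$ of Subsection \ref{subsec:setting}, viewed on $V_{2l}\otimes_\mC\gp_\mC$ by $v\otimes X\mapsto \pi_{(\nu_i,\sigma_i)}(X)\eta(v)$, with this resolution of the identity to obtain $\tilde\eta = \sum_m \Gamma_{l,m}^{i}(\eta)\circ P^{2l}_{2m}$, using $\tilde\eta\circ I^{2l}_{2m} = \Gamma_{l,m}^{i}(\eta)$ by definition together with the flip $\gp_\mC\otimes_\mC V_{2l}\simeq V_{2l}\otimes_\mC\gp_\mC$, under which $X_r\otimes v^{(2l)}_q$ corresponds to $v^{(2l)}_q\otimes w_r$ via the identification $w_r = X_r$ of Lemma \ref{lem:ad_K_p}. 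Taking $\eta = S(l;p)$ and evaluating at $v^{(2l)}_q\otimes w_r$, Lemma \ref{lem:projector} gives $P^{2l}_{2m}(v^{(2l)}_q\otimes w_r) = B_{[2l,2m;q,r]}\,v^{(2(l+m))}_{q+r+m-2}$, so that $\pi_{(\nu_0,\sigma_0)}(X_r)s(l;p,q) = \sum_m B_{[2l,2m;q,r]}\,\Gamma_{l,m}^{0}(S(l;p))\big(v^{(2(l+m))}_{q+r+m-2}\big)$.

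The remaining input is the expansion of the image homomorphism $\Gamma_{l,m}^{0}(S(l;p))$ in the elementary basis. This is exactly equation (\ref{eqn:pf_th_002}) from the proof of Theorem \ref{th:main}, which before its repackaging into the canonical blocks reads $\Gamma_{l,m}^{0}(S(l;p)) = \sum_{-1\le j\le 1}\gamma_{[l,m;p,j]}^{(0)}\,S(l+m;p+m+2j)$, valid because evaluation at $1_3$ separates the induced basis by (\ref{eqn:fmne}). Substituting and reading off the $(q+r+m-2)$-th entry yields precisely the stated formula for part (i). For part (ii) the identical argument applies with $t_i$, $T_i(l;p)$ and the coefficients $\gamma_{[l,m;p,j]}^{(i)}$ in place of $\gamma_{[l,m;p,j]}^{(0)}$; in the case $i=2$ one first passes to the conjugated functions $t_2'(l;p,q) = \pi_{(\nu_2,\sigma_2)}(u_c)t_2(l;p,q)$ and the conjugated basis $X_j'$ exactly as in the proof of Theorem \ref{th:main2}, since the value of $T_2(l;p)$ at $1_3$ is not in simple form, and then transports the relation back by $\pi_{(\nu_2,\sigma_2)}(u_c^{-1})$.

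The main obstacle is bookkeeping at the boundary rather than any substantive new computation: one must verify that the single uniform sum remains correct when some component $V_{2(l+m)}$ vanishes and that the degenerate conventions listed in the statement are precisely what is needed. Concretely, the vanishing of $\gamma_{[0,m;0,j]}^{(0)}$ and $B_{[0,2m;q,r]}$ for $m<2$ handles the minimal $K$-types, the range conventions set $s(l;p,q) = 0$ outside $0\le q\le 2l$ or for $p\notin Z(\sigma_0;l)$, and whenever the target index $p+m+2j$ exceeds $l+m$ the reflection $s(l;p,q) = (-1)^{\varepsilon(\sigma_0;l)}s(l;2l-p,q)$ must reproduce the column actually selected in the canonical block $\mS(\sigma_0;l+m)$. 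This last compatibility is forced by the congruence $\varepsilon(\sigma_0;l)+m\equiv\varepsilon(\sigma_0;l+m)\bmod 2$ already used at the end of the proof of Theorem \ref{th:main}, so no genuinely new identity is required beyond careful tracking of these conventions.
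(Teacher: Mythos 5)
Your proposal is correct and follows essentially the same route as the paper: the paper's proof is exactly the identity $\pi_{(\nu_i,\sigma_i)}(X_r)\,\eta(v^{(2l)}_q)=\sum_{-2\leq m\leq 2}\Gamma^i_{l,m}(\eta)\circ P^{2l}_{2m}(v^{(2l)}_q\otimes X_r)$ combined with Lemma \ref{lem:projector} and Theorems \ref{th:main} and \ref{th:main2}, which is precisely your decomposition via the resolution of the identity. You in fact supply more detail than the paper does --- justifying $\sum_m I^{2l}_{2m}\circ P^{2l}_{2m}=\id$ by Schur's lemma and multiplicity-freeness, and checking the boundary conventions and the reflection sign via $\varepsilon(\sigma_0;l)+m\equiv\varepsilon(\sigma_0;l+m)\bmod 2$ --- while the $u_c$-conjugation step for $i=2$ is harmless but unnecessary here, since Theorem \ref{th:main2} already gives $\Gamma^2_{l,m}$ in the basis $T_2(l;p)$.
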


\begin{proof}
Since
\begin{align*}
\pi_{(\nu_0,\sigma_0)}(X_r)s(l;p,q)
=&\sum_{-2\leq m\leq 2}
\Gamma_{l,m}^0(S(l;p))\circ P^{l}_{2m}(v^{(2l)}_q\otimes X_r),\\
\pi_{(\nu_i,\sigma_i)}(X_r)t_i(l;p,q)
=&\sum_{-2\leq m\leq 2}
\Gamma_{l,m}^i(T_i(l;p))\circ P^{l}_{2m}(v^{(2l)}_q\otimes X_r)&
(i=1,2),
\end{align*}
we obtain the assertion from Theorem \ref{th:main}, \ref{th:main2} and 
Lemma \ref{lem:projector}.
\end{proof}

\def\cprime{$'$}

%\bibliography{bib}

\end{document}